\documentclass[12pt]{article}
\usepackage[margin=1in]{geometry} 
\usepackage{amsmath,amsthm,amssymb,amsfonts}
\usepackage{booktabs}
\numberwithin{equation}{section}

\usepackage{enumitem}
\usepackage[utf8]{inputenc}
\usepackage[pagewise]{lineno}

\usepackage{hyperref}
\usepackage{csquotes}
\usepackage[english]{babel}
\usepackage{xcolor}

\providecommand{\keywords}[1]
{
  \small	
  \textbf{\textit{Keywords:}} #1
}

\newcommand{\MSC}[1]{%
  \small
  \textbf{\textit{Mathematics Subject Classification:}} #1
}

\title{Boundedness in a chemotaxis system with nonlinear diffusion and singular sensitivity}
\author{
    Minh Le\thanks{Institute for Theoretical Sciences, Westlake University, China \texttt{(leminh@westlake.edu.cn)}} 
}
\date{}

\begin{document}
\maketitle

\begin{abstract}
This paper studies the following chemotaxis system with nonlinear diffusion and singular sensitivity
\[
\begin{cases}
    u_t = \nabla \cdot \left( (u+a)^{m-1} \nabla u \right)
    - \chi \nabla \cdot \left( u \dfrac{\nabla v}{v^k} \right), \\[4pt]
    v_t = \Delta v - v + u,
\end{cases}
\]
in a smoothly bounded domain \(\Omega \subset \mathbb{R}^n\), \(n \geq 2\), subject to no-flux boundary conditions, where \(\chi,k,a,m>0\). For suitably regular initial data, we prove global existence and uniform-in-time boundedness of classical solutions if one of the following conditions holds:
\[
\begin{aligned}
& n=2,\quad k\in(0,\infty),\quad m>1;\\
& n\ge 3,\quad k\in\left(0,\tfrac12\right),\quad
  m>1+\frac{(1-k)(n-2)}{n};\\
& n\ge 3,\quad k\in\left[\tfrac12,1\right),\quad
  m>\frac{3n-2}{2n};\\
& n\ge 3,\quad k\in[1,\infty),\quad
  m>\max\left\{1,\frac{3n-2}{2n+2}\right\}.
\end{aligned}
\]
The main contribution is the physically relevant three-dimensional case. In particular, for every \(\chi>0\), \(a>0\), \(m>1\) and \(k\ge 1\), the system admits a global bounded classical solution. Thus, in dimension three, nonlinear diffusion of porous-medium type removes the smallness restriction on the chemotactic sensitivity $\chi$ that earlier results require in the case \(m=1\) and \(k\geq 1\).
\end{abstract}

\keywords{Chemotaxis, singular sensitivity, nonlinear diffusion, global existence}\\
\MSC{35B35, 35K45, 35K55, 92C15, 92C17}

\numberwithin{equation}{section}
\newtheorem{theorem}{Theorem}[section]
\newtheorem{lemma}[theorem]{Lemma}
\newtheorem{remark}{Remark}[section]
\newtheorem{Prop}{Proposition}[section]
\newtheorem{Def}{Definition}[section]
\newtheorem{Corollary}{Corollary}[theorem]
\allowdisplaybreaks

\section{Introduction}

We are concerned with the following chemotaxis system with nonlinear diffusion and singular sensitivity in a smoothly bounded domain \(\Omega \subset \mathbb{R}^n\) with \(n \geq 2\):
\begin{equation} \label{1}
\begin{cases}
    u_t = \nabla \cdot \left(  (u+a)^{m-1}  \nabla u \right) - \chi \nabla \cdot \left( u \dfrac{\nabla v}{v^k} \right), \qquad &\text{in }\Omega \times (0, \infty), \\[6pt]
    v_t = \Delta v - v + u, \qquad &\text{in }\Omega \times (0, \infty), 
\end{cases}
\end{equation}
where  \(\chi>0\), \(k>0\), \(a>0\) and \(m >1\). The system is endowed with the homogeneous Neumann boundary condition
\begin{equation} \label{bdry}
    \frac{\partial u}{\partial \nu} =\frac{\partial v}{\partial \nu} =0 ,\qquad x \in \partial \Omega, \, t>0,
\end{equation}
and initial conditions
\begin{equation} \label{initial}
    \begin{cases}
    \displaystyle u_0 \in C^{0} (\bar{\Omega}) \text{ is nonnegative with \(\int_\Omega u_0 >0\),}  \\
    \displaystyle v_0 \in W^{1, \infty}(\Omega) \;\; \text{and \(v_0> 0\) in \(\bar{\Omega}\).} 
    \end{cases}
\end{equation}

System \eqref{1} is used to model the directed movement of cells or micro-organisms in response to chemical stimuli, a phenomenon known as chemotaxis. It belongs to the well-known class of Keller--Segel systems, which were originally introduced in \cite{Keller2} for the regular sensitivity case (\(m=1\) and \(k=0\)) and in \cite{Keller} for the singular sensitivity case (\(m=1\) and \(k=1\)). Here, the functions \(u(x,t)\) and \(v(x,t)\) represent the population density and the chemical signal concentration at position \(x\) and time \(t\), respectively.

The regular sensitivity Keller--Segel system has attracted considerable attention from the mathematical community, largely due to one of its most striking properties, known as the \emph{critical mass} phenomenon. In two dimensions, this property states that if the initial mass is less than a critical value, solutions remain global \cite{Dolbeault, Dolbeault1, NSY}, whereas if the initial mass exceeds this value, solutions may blow up in finite time \cite{Nagai1, Nagai2, Nagai3, Nagai4}. However, this phenomenon does not persist in higher-dimensional settings. Indeed, it was shown in \cite{Winkler-2010, Winkler-2013} that in dimensions \(n \geq 3\), there exist initial data with arbitrarily small mass that still lead to finite-time blow-up.

In contrast, the singular sensitivity Keller--Segel system does not exhibit this critical mass phenomenon. Indeed, it was shown in \cite{Nagai+Senba} that solutions of the parabolic--elliptic system exist globally when \(\chi< \frac{2}{(n-2)_+}\), whereas finite-time blow-up occurs when \(\chi> \frac{2n}{(n-2)_+}\) under the assumption that the initial data are radially symmetric. Without this symmetry assumption, a global existence result for the parabolic--elliptic system was established in \cite{Biler-1999}, while global boundedness was proven in \cite{KMT} under the assumption that \(\chi< \frac{2}{n}\). Parallel to the parabolic--elliptic case, the global existence result for the fully parabolic case was established in \cite{Winkler+2011}, while global boundedness was proved in \cite{Fujie} under the assumption that \(\chi< \sqrt{\frac{2}{n}}\). In \cite{Fujie_2018}, it was shown that the condition \(\chi< \frac{n}{n-2}\) ensures global boundedness when the left-hand side of the first equation in \eqref{1} is replaced by \(\kappa u_t\) with a sufficiently small \(\kappa>0\). Moreover, it was conjectured in that paper that if \(\chi> \frac{n}{n-2}\), solutions may blow up in finite time. Without the smallness assumption on \(\kappa\), it was shown in \cite{Lankeit_2016, Le2026NA} that there exists \(\chi_0> \sqrt{\frac{2}{n}}\) such that for \(\chi < \chi_0\), solutions are global and bounded.

We now shift our attention to the nonlinear diffusion case where \(m>1\). In the regular sensitivity case where \(k=0\), the global existence and boundedness of solutions were demonstrated in \cite{TaoWinkler2012} when \(m> 2- \frac{2}{n}\), while finite-time blow-up of radial solutions may occur when \(m < 2- \frac{2}{n}\) \cite{CS2012}. This raises a natural question: in the singular sensitivity case where \(k>0\), does there exist a critical value \(m_c>0\) such that for \(m>m_c\), solutions are global and bounded, while for \(m< m_c\), solutions may blow up in finite time?

In the parabolic--elliptic case, where the second equation is replaced by \(\Delta v-v+u=0\), this question has been partially answered. More specifically, it was shown in \cite{WangDingLi2017} that one can take \(m_c \leq 1+ \frac{(n-2)(1-k)}{n}\) when \(k < 1\), and \(m_c \leq 1\) when \(k \geq 1\). However, the understanding of the fully parabolic system \eqref{1} is very limited. To the best of our knowledge, the only existing result is \cite{Ren2024}, where the global existence and boundedness of weak solutions were established in dimension two for \(m> \frac{3}{2}\), which is far from satisfactory when compared to the parabolic--elliptic case. Therefore, the main objective of this paper is to provide a partial answer to this question regarding the global boundedness side for the fully parabolic system \eqref{1}. We now precisely state our main result in the following theorem.

\begin{theorem} \label{thm1}
Let \(\Omega \subset \mathbb{R}^n\), \(n \geq 2\), be a smoothly bounded domain, assumed in addition to be convex when \(n \ge 3\). Assume that \(a>0\), \(\chi>0\) and suppose that one of the following conditions holds:
\begin{align}
    &n = 2, \quad k \in (0,\infty), \quad m > 1; \tag{A} \label{case:A} \\[2pt]
    &n \geq 3, \quad k \in \left(0,\frac{1}{2}\right), \quad
        m > 1 + \frac{(1-k)(n-2)}{n}; \tag{B} \label{case:B} \\[2pt]
    &n \geq 3, \quad k \in \left[\frac{1}{2},1\right), \quad
        m > \frac{3n-2}{2n}; \tag{C} \label{case:C} \\[2pt]
    &n \geq 3, \quad k \in [1,\infty), \quad
        m > \max\left\{1,\frac{3n-2}{2n+2}\right\}. \tag{D} \label{case:D}
\end{align}
    Then, under the boundary condition \eqref{bdry}, the system \eqref{1} with initial data \eqref{initial} possesses a unique global classical positive solution \((u,v)\) such that
    \[
    u, v \in C^0 \left(\bar{\Omega} \times [0,\infty)\right) \cap C^{2,1} \left(\bar{\Omega} \times (0,\infty)\right).
    \]
    Moreover, the solution is uniformly bounded in the sense that
    \begin{align*}
        \sup_{t > 0} \left\{ \| u(\cdot,t) \|_{L^\infty(\Omega)} + \| v(\cdot,t) \|_{W^{1,\infty}(\Omega)} \right\} < \infty.
    \end{align*}
\end{theorem}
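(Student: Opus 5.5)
Local existence, uniqueness and positivity follow from standard fixed-point arguments for quasilinear parabolic systems (Amann's theory), together with the extensibility criterion
\[
\limsup_{t\nearrow T_{\max}}\left(\|u(\cdot,t)\|_{L^\infty(\Omega)}+\|v(\cdot,t)\|_{W^{1,\infty}(\Omega)}\right)=\infty \quad\text{if } T_{\max}<\infty .
\]
The plan is to show that $\|u\|_{L^p}$ stays bounded for some $p>n/2$ (and then larger $p$). After that, Moser--Alikakos iteration for the porous-medium-type equation, together with Neumann semigroup estimates for $v$, gives the $L^\infty$ and $W^{1,\infty}$ bounds.

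The first ingredient is a pointwise lower bound for $v$. Mass conservation gives $\int_\Omega u=\int_\Omega u_0$. The standard lower estimate for Neumann heat semigroups, from Fujie's work, then gives $v\ge \eta>0$ on $\Omega\times(0,T_{\max})$ with $\eta$ independent of time. This makes the sensitivity $v^{-k}$ effectively regular, bounded by $\eta^{-k}$.

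For $k<1$ this already helps. Multiplying the first equation by $u^{p-1}$ and using $(u+a)^{m-1}\ge c\,u^{m-1}$ (and $\ge a^{m-1}$) yields
\[
\frac{d}{dt}\int u^p+c\int |\nabla u^{\frac{p+m-1}{2}}|^2\le C\int u^{p+1-m}\frac{|\nabla v|^2}{v^{2k}} .
\]
This is coupled with the evolution of $\int |\nabla v|^{2q}$. For $n\ge3$ the convexity of $\Omega$ gives $\partial_\nu|\nabla v|^2\le0$, so the boundary term can be dropped. The dissipation $\int |\nabla|\nabla v|^q|^2$ then controls the cross term via Young's inequality and Gagliardo--Nirenberg, with the mass bound used as the base norm. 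Balancing exponents in the standard way (as in Tao--Winkler) should reproduce the threshold $m>2-\frac{2}{n}$, corresponding to $k=0$.

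The thresholds in (B) and (C) are better than this, so the weight $v^{-k}$ must be exploited rather than discarded. Instead of $\int u^p$, I would test with $u^{p-1}v^{-r}$ and study the functional $\int u^pv^{-r}$, in the spirit of Winkler and Fujie for singular sensitivity. The $v_t$-equation produces the good term $r\int u^{p+1}v^{-r-1}$ and a term involving $|\nabla v|^2v^{-r-2}$. Choosing $r$ in terms of $k$ gives a quadratic form in $(\nabla u,\nabla v)$ that can be made nonnegative by Young's inequality, for suitable $p$ relative to $m$ and $k$. What remains is controlled by the nonlinear diffusion, and Gagliardo--Nirenberg turns it into the conditions $m>1+\frac{(1-k)(n-2)}{n}$ for $k<\frac12$ and $m>\frac{3n-2}{2n}$ for $k\ge\frac12$. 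For $k\ge1$ I would also use the a priori bound on $\int |\nabla\ln v|^2$, or $\int |\nabla v|^2v^{-2}$, obtained from testing the $v$-equation by $v^{-1}$. This should give the weaker requirement $m>\frac{3n-2}{2n+2}$.

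In dimension two I would use the Trudinger--Moser type argument from Ren's two-dimensional work. Alternatively, the energy $\int u\ln u$ combined with $\int |\nabla v|^2v^{-2k}$ and the Gagliardo--Nirenberg inequality with logarithmic correction should suffice once $m>1$.

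The main obstacle is the weighted functional step: finding admissible $r,p$ so that the coupled quadratic form is definite and the leftover terms are absorbable, precisely down to the stated thresholds. Once some $L^p$ bound with $p>\frac n2$ is obtained, the remaining steps are routine. Semigroup estimates give $\nabla v\in L^\infty$ via an $L^q$ bound with $q>n$, and Alikakos iteration gives $u\in L^\infty$. The extensibility criterion then yields $T_{\max}=\infty$ and uniform boundedness.
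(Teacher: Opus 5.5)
Your proposal has a genuine gap, and it sits exactly where the theorem has content: cases (B), (C) and (D) for $n\ge 3$. There you rely on the functional $\int_\Omega u^p v^{-r}$, which is Winkler's method for $m=1$. The paper points out that this method does not carry over to $m>1$, and the reason is concrete. When you compute $\frac{d}{dt}\int_\Omega u^pv^{-r}$, the gradient of the weight acting on the diffusive flux produces the cross term $pr\int_\Omega u^{p-1}(u+a)^{m-1}v^{-r-1}\nabla u\cdot\nabla v$. The only terms quadratic in $\nabla v$ are $-r(r+1)\int_\Omega u^pv^{-r-2}|\nabla v|^2$ and $-pr\chi\int_\Omega u^pv^{-r-k-1}|\nabla v|^2$, and neither carries a factor $(u+a)^{m-1}$. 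Set $X=u^{\frac{p-2}{2}}v^{-\frac r2}|\nabla u|$ and $Y=u^{\frac p2}v^{-\frac r2-1}|\nabla v|$. A sign-definite quadratic form would then require
\[
\bigl(pr(u+a)^{m-1}+p(p-1)\chi v^{1-k}+pr\bigr)^2\le 4p(p-1)(u+a)^{m-1}\bigl(pr\chi v^{1-k}+r(r+1)\bigr).
\]
This fails wherever $u$ is large and $v$ is not, because the left side is at least $p^2r^2(u+a)^{2(m-1)}$. If you split by Young's inequality instead, you are left with $\int_\Omega u^p(u+a)^{m-1}v^{-r-2}|\nabla v|^2$, and nothing in your scheme controls it. So no choice of $r$ and $p$ yields the stated thresholds; what you call the main obstacle is in fact the whole argument.

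The same problem affects (D). Testing the $v$-equation by $v^{-1}$ gives only a space--time bound on $\int_\Omega|\nabla \ln v|^2$. The threshold $\frac{3n-2}{2n+2}$ is exactly the condition $p-m+2<p+m-1+\frac{2(m+1)}{n}$, i.e.\ Gagliardo--Nirenberg with a uniform $L^{m+1}$ bound on $u$ as the base norm, and you never obtain such a bound. For $n=2$, note that the Ren--Zhou result you cite only covers $m>\frac32$ (and weak solutions). Your $\int_\Omega u\ln u$ alternative is essentially the paper's route, where it is paired with $\frac12\int_\Omega|\nabla v|^2$ and Lemma~\ref{LK-1}.

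The paper supplies two missing ideas.

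(i) Lemma~\ref{L} states that on convex $\Omega$ and for large $q$,
\[
\frac{d}{dt}\int_\Omega\frac{|\nabla v|^{2q}}{v^{q-1}}+(q+1)\int_\Omega\frac{|\nabla v|^{2q}}{v^{q-1}}+\frac{q}{18}\int_\Omega\frac{|\nabla v|^{2q+2}}{v^{q+1}}\le C\int_\Omega u^{q+1}.
\]
The coercive term comes from the functional inequalities of Lemma~\ref{Lm}, which hold for every positive $\phi$ with $\partial_\nu\phi=0$. So no a priori information on $\nabla v$ is needed. For $k\ge\frac12$ one has $v^{-2k}\le\eta^{1-2k}v^{-1}$, and Young's inequality splits $\int_\Omega u^{p-m+1}|\nabla v|^2v^{-1}$ into the absorbable $\int_\Omega|\nabla v|^{2(p-m+2)}v^{-(p-m+2)}$ plus $\int_\Omega u^{p-m+2}$. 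The diffusion gain $\int_\Omega u^{p+m-1+\frac2n}$ dominates $\int_\Omega u^{p-m+2}$ provided $m>\frac{3n-2}{2n}$, which gives (C). For $k<\frac12$, the leftover factor $v^{1-2k}$ is absorbed by adding $\int_\Omega v^{\lambda(p-m+1)+2-2k}$ with $\lambda<\frac{n}{n-2}$ (Lemma~\ref{V}), which gives (B).

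(ii) For $k\ge 1$, Lemma~\ref{L3} proves $\sup_t\int_\Omega u^{m+1}<\infty$ using the functional $\frac1{m+1}\int_\Omega(u+a)^{m+1}+A\int_\Omega\frac{|\nabla v|^4}{v^2}+B\int_\Omega u\frac{|\nabla v|^2}{v}$.
\begin{enumerate}
\item The last term produces $-\frac12\int_\Omega u^2\frac{|\nabla v|^2}{v^2}$, which absorbs the chemotactic term.
\item The dissipation of the middle term absorbs the $\frac{|\nabla v|^2|D^2v|^2}{v^2}$ and $\frac{|\nabla v|^6}{v^4}$ terms that the last term creates.
\item The resulting $\int_\Omega|\nabla u|^2$ is traded, using $m>1$, for $\varepsilon\int_\Omega(u+a)^{2m-2}|\nabla u|^2+C\int_\Omega\frac{|\nabla u|^2}{(u+a)^2}$.
\item That last quantity is only space--time bounded (Lemma~\ref{L1}, which uses $a>0$ and $k\ge1$), and Lemma~\ref{ODI} closes the estimate.
\end{enumerate}
The Gagliardo--Nirenberg step based on this $L^{m+1}$ bound then gives (D).
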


\begin{remark}
    Table~\ref{tab:comparison} compares the parameter ranges for which global
existence and boundedness are known for the parabolic--elliptic system of
\cite{WangDingLi2017}, in which the second equation of \eqref{1} is replaced
by \(\Delta v-v+u=0\), with those established for the fully parabolic system
\eqref{1} in Theorem~\ref{thm1}. 
\begin{table}[htbp]
\centering
\caption{Comparison between the parabolic--elliptic system of
\cite{WangDingLi2017} and the fully parabolic system \eqref{1}.}
\label{tab:comparison}
\renewcommand{\arraystretch}{1.4}
\small
\begin{tabular}{|p{2.5cm}|p{2.2cm}|p{4.4cm}|p{4.6cm}|}
\hline
System & Status & Condition (i): \(m>1,\ k\ge 1\)
& Condition (ii): \(m>1+\dfrac{(1-k)(n-2)}{n},\ k\in(0,1)\) \\
\hline
Parabolic--elliptic
\cite{WangDingLi2017}
& solved
&  \(n\ge 2\)
& \(n\ge 2\) \\
\hline
Fully parabolic
(Theorem~\ref{thm1})
& solved
& \(n=2\) \ (case \eqref{case:A}); $n=3,4$\ (case \eqref{case:D})
& \(n=2\) \ (case \eqref{case:A});
  \(n\ge 3,\ k\in(0,\tfrac12]\) \ (case \eqref{case:B});
  \(n\ge 3,\ k\in[\tfrac12,1),\ m>\tfrac{3n-2}{2n}\) \ (case \eqref{case:C}) \\
\hline
Fully parabolic
& open
& \(n\ge 5\)
& \(n\ge 3,\ k\in(\tfrac12,1),\ 
  1+\dfrac{(1-k)(n-2)}{n}<m\le\dfrac{3n-2}{2n}\) \\
\hline
\end{tabular}
\end{table}
\end{remark}
\begin{remark}
We establish global existence and boundedness for all \(\chi>0\), \(m>1\), \(k\ge 1\), and \(n=3,4\). By contrast, in the case \(m=1\) and \(k=1\), earlier work in \cite{Winkler+2011} and \cite{Le2026NA} showed that solutions are global and bounded only when \(\chi\) is sufficiently small. In the physically relevant dimension \(n=3\), the assumptions \(m>1\) and \(a>0\) allow us to remove this smallness restriction on \(\chi\). 
\end{remark}

\begin{remark}
    The argument used in the proof of Theorem \ref{thm1} also works for the case \(k=0\), and this provides a different proof compared to \cite{TaoWinkler2012}, which proves the global boundedness of solutions for \(m>\frac{2n-2}{n}\).
\end{remark}

\begin{remark}
    Our results indicate that the singular sensitivity is indeed in favor of blow-up prevention since compared to the regular sensitivity case where \(k=0\), the range of \(m\) for global boundedness results is wider.
\end{remark}

\textbf{Challenges and main ideas.}
The main difficulty in Theorem~\ref{thm1} is to handle the simultaneous presence of singular sensitivity and nonlinear diffusion. On the one hand, unlike the parabolic–elliptic case, the approach used in \cite{WangDingLi2017} is not applicable to the fully parabolic case, since we cannot replace the Laplacian of the chemical signal by the cell density. On the other hand, the classical technique for the linear diffusion case \(m=1\), which is based on the evolution of
\(\int_\Omega u^p v^{-q}\) for suitable \(p,q>0\) \cite{Winkler+2011}, does not carry over to \(m>1\).
Moreover, the methods developed in \cite{Ding2018,Jia2019} for nonlinear diffusion with singular sensitivity are not directly applicable to the present setting.
The most delicate and technically demanding case is \eqref{case:D},
whose main idea we describe below.
Intuitively, singular sensitivity and signal consumption exhibit common
structural features: for example, the approach in \cite{Winkler+2011}
for singular sensitivity and that in \cite{Tao2011} for signal consumption
are both based on the evolution of weighted quantities of the form
\(\int_\Omega u^p\phi(v)\).
Following this intuition, and inspired by the approach in \cite{Winkler2022}
for a chemotaxis system with nonlinear diffusion and signal consumption in
dimension three, we introduce the energy functional
\[
F(t)=\int_\Omega u^{1+m}
+A\int_\Omega u\frac{|\nabla v|^2}{v}
+B\int_\Omega\frac{|\nabla v|^4}{v^2},
\qquad t\in(0,T_{\max}),
\]
where the weights \(A>0\) and \(B>0\) are selected so that the evolutions of the three
components of \(F\) mutually absorb each other's unfavorable terms.
To make this precise, set
\[
\mathcal E_0:=\int_\Omega (u+a)^{m+1},\qquad
\mathcal E_1:=\int_\Omega u\frac{|\nabla v|^2}{v},\qquad
\mathcal E_2:=\int_\Omega \frac{|\nabla v|^4}{v^2}.
\]
In the evolution of \(\mathcal E_0\), the nonlinear diffusion contributes the
favorable term \(-m\mathcal D\), where
\[
\mathcal D:=\int_\Omega (u+a)^{2m-2}|\nabla u|^2,
\]
while the singular sensitivity contributes the unfavorable term
\[
\mathcal G:=\int_\Omega u^2\frac{|\nabla v|^2}{v^2}.
\]
By Lemma~\ref{LM}, the evolution of \(\mathcal E_1\) produces the favorable
term \(-\frac12\mathcal G\), which absorbs the unfavorable term \(\mathcal G\)
arising from \(\mathcal E_0\). At the same time, it produces a small multiple
\(\varepsilon\mathcal D\) of \(\mathcal D\), which is absorbed by the diffusion
term in \(\mathcal E_0\), as well as the unfavorable term
\[
\mathcal B:=\int_\Omega \frac{|\nabla v|^2|D^2v|^2}{v^2}
+\int_\Omega \frac{|\nabla v|^6}{v^4}.
\]
By Lemma~\ref{L2}, the term \(\mathcal B\) is dominated by the dissipative
contribution from the evolution of \(\mathcal E_2\). Finally, again by
Lemma~\ref{L2}, the evolution of \(\mathcal E_2\) produces the unfavorable term
\[
\mathcal R:=\int_\Omega |\nabla u|^2,
\]
which is absorbed, via Young's inequality, into the nonlinear diffusion term
\(-m\mathcal D\) from \(\mathcal E_0\), at the cost of the relaxation term
\[
\int_\Omega \frac{|\nabla u|^2}{(u+a)^2}.
\]
The second most delicate case is the one in which \eqref{case:B} holds.
To treat this case, we consider the energy functional
\[
G(t):=\frac{1}{p}\int_\Omega u^p
+\int_\Omega \frac{|\nabla v|^{2\left(p-m+1+\frac{1-2k}{\lambda}\right)}}
{v^{p-m+1+\frac{1-2k}{\lambda}}}
+\int_\Omega v^{\lambda(p-m+1)+2-2k},
\]
where \(1\le \lambda<\frac{n}{n-2}\) is suitably chosen, and apply the key
inequality established in Lemma~\ref{L} to establish the \(L^p\) boundedness
of \(u\). The case \eqref{case:C} is treated by an analogous argument, whereas
\eqref{case:A} follows from a standard energy method. \\
The remainder of the paper is organized as follows. In Section~\ref{S2}, we
recall several useful inequalities and a standard parabolic regularity result
that will be applied in subsequent sections. Section~\ref{S3} establishes the
local well-posedness of solutions to system \eqref{1} and derives several a
priori estimates. The \(L^p\) boundedness of solutions is proved in
Section~\ref{S4} for dimension two and in Section~\ref{S5} for higher
dimensions. Finally, Section~\ref{S6} provides the proof of
Theorem~\ref{thm1}.

\section{Preliminaries}\label{S2}

In this section, we recall several useful inequalities and a standard parabolic regularity theory which will be applied in the subsequent sections. Let us commence this section by presenting a generalized form of the Gagliardo--Nirenberg interpolation inequality, whose proof can be found in \cite[Lemma 2.3]{LiLankeit2016}.
\begin{lemma} \label{GN}
Let \(\Omega\) be a  bounded and smooth domain in \(\mathbb{R}^n\) with \(n \geq 1\). Let \(r \geq 1\), \(0< q\leq p < \infty\), \(s>0\) such that 
\begin{align*}
    \frac{1}{r} \leq \frac{1}{n}+\frac{1}{p}.
\end{align*} Then there exists a constant \(C_{GN}>0\) such that 
\begin{equation*}
    \left \| f \right \|^p_{L^p(\Omega)}\leq C_{GN}\left ( \left \| \nabla f \right \|_{L^r(\Omega)}^{p \theta}\left \| f \right \|^{p(1-\theta)}_{L^q(\Omega)} +\left \| f \right \|^p_{L^s(\Omega)} \right )\qquad \text{for all } f \in W^{1,r}(\Omega)\cap L^q(\Omega),
\end{equation*}
where \(\theta= \frac{\frac{1}{q}-\frac{1}{p}}{\frac{1}{q}+\frac{1}{n}-\frac{1}{r}} \in [0,1]\).
\end{lemma}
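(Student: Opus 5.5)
The plan is to reduce Lemma~\ref{GN} to the classical Gagliardo--Nirenberg inequality on bounded smooth domains, which covers exponents $q\ge 1$, and then treat the regime $0<q<1$ by an interpolation argument that trades $\|f\|_{L^1}$ for $\|f\|_{L^q}$. The lower-order term $\|f\|_{L^s}$ with arbitrary $s>0$ is handled by the same device. Throughout, $\theta\in[0,1]$ follows from $q\le p$ together with $\frac1r\le\frac1n+\frac1p$: the numerator is nonnegative, and the denominator dominates it.

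\emph{Step 1 (the case $q\ge1$).} Here I invoke the standard Nirenberg inequality on a bounded Lipschitz domain:
\[
\|f\|_{L^p}\le C\|\nabla f\|_{L^r}^{\theta}\|f\|_{L^q}^{1-\theta}+C\|f\|_{L^{\sigma}}
\]
for any $\sigma>0$. The lower-order term may first be taken as $\|f\|_{L^q}$. To replace it by $\|f\|_{L^{s}}$ with arbitrary $s$, I argue by contradiction and compactness: if no such constant existed, a normalized sequence would converge strongly in $L^q$, by Rellich, to a constant with vanishing $L^s$ norm. Raising everything to the power $p$ then gives the stated form.

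\emph{Step 2 (the case $0<q<1$).} I apply Step 1 with $q$ replaced by $1$, obtaining the exponent $\theta_1$. Hölder's inequality then gives
\[
\|f\|_{L^1}\le\|f\|_{L^q}^{\mu}\|f\|_{L^p}^{1-\mu}, \qquad \mu=\frac{1-1/p}{1/q-1/p}.
\]
Substituting this bound, the factor $\|f\|_{L^p}^{(1-\theta_1)(1-\mu)}$ appears on the right. Since its exponent is less than $1$, Young's inequality absorbs it into the left-hand side. This leaves a product $\|\nabla f\|_{L^r}^{a}\|f\|_{L^q}^{b}$, and a scaling or dimensional check shows $a/(a+b)=\theta$, the exponent in the statement.

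The main obstacle is the bookkeeping in Step 2: verifying that after the absorption the exponents collapse exactly to $p\theta$ and $p(1-\theta)$, and that the lower-order term remains of the form $\|f\|_{L^s}^p$. The first point I would check by invariance under the scaling $f\mapsto\lambda f$ combined with the dilation relation defining $\theta$. The degenerate cases $\theta=0$ or $p=q$ are trivial by Hölder on the bounded domain.
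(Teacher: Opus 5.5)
Your Step 1 is fine, and for $q<1<p$ Step 2 also works: with $1-\beta=\theta_1+\mu(1-\theta_1)$ one checks directly that $\theta_1/(1-\beta)=\theta$. That is the only $q<1$ configuration the paper ever needs (Lemma~\ref{L3}, target exponent $\frac{m+1}{m}$, $q=\frac1m$). The paper gives no proof of its own and simply cites \cite[Lemma 2.3]{LiLankeit2016}. However, the statement as written allows $0<q<p\le 1$, and there your Step 2 breaks down. If $p<1$, Step 1 cannot be applied to the exponent pair $(p,1)$, since it requires $1\le p$. Moreover, the Hölder bound $\|f\|_{L^1}\le \|f\|_{L^q}^{\mu}\|f\|_{L^p}^{1-\mu}$ is false, because $1$ does not lie between $q$ and $p$ (your $\mu$ is negative). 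If $p=1$, then $\theta_1=\mu=0$. Both ingredients collapse to tautologies such as $\|f\|_{L^1}\le C\|f\|_{L^1}+C\|f\|_{L^s}$, and the exponent $(1-\theta_1)(1-\mu)$ equals $1$, so nothing can be absorbed. Yet the claimed inequality $\|f\|_{L^1}\le C\|\nabla f\|_{L^r}^{\theta}\|f\|_{L^q}^{1-\theta}+C\|f\|_{L^s}$ has $\theta>0$ and is not trivial.

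The repair is to interpolate at an auxiliary exponent above $1$ rather than at $p$. For $p\le 1$, pick $\tilde p>1$ with $\frac1r\le\frac1n+\frac1{\tilde p}$; this is possible since the Sobolev exponent of $W^{1,r}$ exceeds $1$. Apply your Step 2 to the pair $(\tilde p,q)$ to get $\|f\|_{L^{\tilde p}}\le C\|\nabla f\|_{L^r}^{\hat\theta}\|f\|_{L^q}^{1-\hat\theta}+C\|f\|_{L^s}$. Then use $\|f\|_{L^p}\le\|f\|_{L^q}^{\alpha}\|f\|_{L^{\tilde p}}^{1-\alpha}$ with $\frac1p=\frac{\alpha}{q}+\frac{1-\alpha}{\tilde p}$. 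Since $(1-\alpha)\hat\theta=\theta$, the main term comes out with the right exponents. The new cross term $\|f\|_{L^q}^{\alpha}\|f\|_{L^s}^{1-\alpha}$ is bounded by $\varepsilon\|f\|_{L^q}+C_\varepsilon\|f\|_{L^s}$. The estimate $\|f\|_{L^q}\le|\Omega|^{\frac1q-\frac1p}\|f\|_{L^p}$ then lets you absorb $\varepsilon\|f\|_{L^q}$ into the left-hand side, which is finite by Sobolev embedding. With this addition your argument covers the full range of the statement.
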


To prepare for the proof of the \(L^p\) boundedness in the case \(m>1\) and \(k\ge 1\), we will need the following inequalities, whose proofs are given in \cite[Lemma 2.5]{Le2026NA}.
 \begin{lemma} \label{Lm}
     Let \(\Omega \subset \mathbb{R}^n\), \(n\geq 1\), be a smoothly bounded domain and let \(l>0\) and \(q > l+1\). For any \(\phi \in C^2(\bar{ \Omega  })\) such that \(\phi>0\) in \(\bar{\Omega}\) and \(\frac{\partial \phi}{\partial \nu}=0\) on \(\partial \Omega\), the following inequalities hold
        \begin{align} \label{Lm.1}
             \int_\Omega \phi^{-l-2}|\nabla \phi|^{q+2} \leq \left ( \frac{q+\sqrt{n}}{l+1} \right )^2 \int_\Omega \phi^{-l}|\nabla \phi|^{q-2} |D^2 \phi|^2,
        \end{align}
        and
    \begin{align} \label{Lm.2}
        \int_\Omega \phi^{-l-2}|\nabla \phi|^{q+2} \leq \left ( 4\left ( \frac{l+1+\sqrt{n}}{q-l+1} \right )^2+2 \right )\int_\Omega \phi^{-l+2}|\nabla \phi|^{q-2}|D^2 \ln \phi|^2,
    \end{align}
    as well as
    \begin{align} \label{Lm.3}
        \int_\Omega \phi^{-l}|\nabla \phi|^{q-2} |D^2 \phi|^2 \leq \left ( \frac{q-l+1}{q-l-1} \cdot \left ( \frac{l+1+\sqrt{n}}{q-l+1} \right )^2+1 \right )\int_\Omega \phi^{-l+2}|\nabla \phi|^{q-2}|D^2 \ln \phi|^2.
    \end{align}
    \end{lemma}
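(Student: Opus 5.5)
The three estimates all come from one integration by parts against the Neumann condition, followed by Cauchy--Schwarz. To avoid trouble where \(\nabla\phi=0\) when \(q<2\), I would first work with \((|\nabla\phi|^2+\varepsilon)^{(q-2)/2}\) in place of \(|\nabla\phi|^{q-2}\) and let \(\varepsilon\downarrow0\) at the end. Since \(q>l+1>1\) and \(\phi\) is smooth and positive on \(\bar\Omega\), every integral involved is finite and the limit passage is routine. Throughout I use the pointwise facts \(|\Delta\phi|\le\sqrt n\,|D^2\phi|\) and \(|\nabla\phi\cdot D^2\phi\nabla\phi|\le|\nabla\phi|^2|D^2\phi|\).

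For \eqref{Lm.1}, write \(\phi^{-l-2}\nabla\phi=-\frac{1}{l+1}\nabla(\phi^{-l-1})\). Integrating by parts, with the boundary term vanishing because \(\partial_\nu\phi=0\), gives
\[
I:=\int_\Omega\phi^{-l-2}|\nabla\phi|^{q+2}=\frac1{l+1}\int_\Omega\phi^{-l-1}\Big(|\nabla\phi|^q\Delta\phi+q|\nabla\phi|^{q-2}\nabla\phi\cdot D^2\phi\nabla\phi\Big).
\]
Applying the two pointwise bounds, this is at most \(\frac{q+\sqrt n}{l+1}\int_\Omega\phi^{-l-1}|\nabla\phi|^q|D^2\phi|\). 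I then split the integrand as \(\big(\phi^{-\frac{l+2}{2}}|\nabla\phi|^{\frac{q+2}{2}}\big)\cdot\big(\phi^{-\frac l2}|\nabla\phi|^{\frac{q-2}{2}}|D^2\phi|\big)\) and apply Cauchy--Schwarz. Dividing by \(I^{1/2}\) yields \eqref{Lm.1} with exactly the stated constant.

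For \eqref{Lm.2}, I would use the identity
\[
\phi^2|D^2\ln\phi|^2=|D^2\phi|^2-\frac{2}{\phi}\nabla\phi\cdot D^2\phi\nabla\phi+\frac{|\nabla\phi|^4}{\phi^2}.
\]
Multiplying by \(\phi^{-l}|\nabla\phi|^{q-2}\) and integrating gives
\[
J:=\int_\Omega\phi^{-l+2}|\nabla\phi|^{q-2}|D^2\ln\phi|^2=K-2M+I,
\]
where \(K:=\int_\Omega\phi^{-l}|\nabla\phi|^{q-2}|D^2\phi|^2\) and \(M:=\int_\Omega\phi^{-l-1}|\nabla\phi|^{q-2}\nabla\phi\cdot D^2\phi\nabla\phi\). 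The cross term \(M\) is the key. Since \(|\nabla\phi|^{q-2}\nabla\phi\cdot D^2\phi\nabla\phi=\frac1q\nabla\phi\cdot\nabla|\nabla\phi|^q\), the same integration by parts as above expresses \(M\) through \(I\) and \(\int\phi^{-l-1}|\nabla\phi|^q\Delta\phi\). Alternatively, I can redo the computation of the first step with \(\psi=\ln\phi\) and weight \(\phi^{q-l}\). This gives \((q-l)I\le(q+\sqrt n)\int_\Omega\phi^{q-l}|\nabla\psi|^q|D^2\psi|\), and after Cauchy--Schwarz, \(I\le\big(\frac{q+\sqrt n}{q-l}\big)^2J\). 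Here \(q>l+1\) is exactly what makes the coefficient \(q-l-1\) arising from differentiating \(\phi^{q-l-1}\) harmless.

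To match the precise constant \(4\big(\frac{l+1+\sqrt n}{q-l+1}\big)^2+2\), I would instead eliminate \(M\) via the identity \(J=K-2M+I\) and then absorb with Young's inequality, using \(2ab\le\frac12a^2+2b^2\). This produces the factors \(4\) and \(2\). \eqref{Lm.3} then follows by combining the same identity, which bounds \(K\) by \(J\), \(I\) and \(M\), with the estimate of \(M\) in terms of \(I\) and \(J\) and with \eqref{Lm.2}. The factor \(\frac{q-l+1}{q-l-1}\) appears when one divides by the coefficient \(q-l-1\) of the absorbed term.

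The main obstacle is this bookkeeping of the cross term \(M\): the sign and the coefficients must be arranged so that the absorbed multiple of \(I\) or \(K\) stays strictly below \(1\), with the exact constants asserted. The structural argument itself (integration by parts plus Cauchy--Schwarz) is robust; only reproducing the precise constants requires care.
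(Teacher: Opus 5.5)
Your proof of \eqref{Lm.1} is complete. Your computation with $\psi=\ln\phi$ correctly yields $I\le\left(\frac{q+\sqrt n}{q-l}\right)^2J$, and this already implies \eqref{Lm.2}, since a smaller constant suffices. Put $s=q-l>1$ and $A=l+\sqrt n$. For $s\le A$, the identity $(s+A)(s+1)-2s(A+1)=(s-1)(s-A)\le 0$ gives $\frac{s+A}{s}\le\frac{2(A+1)}{s+1}$. For $s>A$, set $x:=A/s<1$; then $(1+x)^2\le 4x^2+2<4\left(\frac{A+1}{s+1}\right)^2+2$, because $\frac{A+1}{s+1}>\frac{A}{s}$. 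The route you actually propose for the ``precise'' constant does not work as described. Your integration by parts for $M$, after writing $\Delta\phi=\phi\Delta\ln\phi+|\nabla\phi|^2/\phi$, reads $qM=lI-Q$ with $Q:=\int_\Omega\phi^{-l}|\nabla\phi|^q\Delta\ln\phi$. Eliminating $M$ entirely through $M=\frac12(K+I-J)$ leaves $\frac q2K+\frac{q-2l}{2}I=\frac q2J-Q$, which gives no control of $I$ when $q\le 2l$.

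The genuine gap is \eqref{Lm.3}, for which you give no argument. The natural reading of your sketch is to estimate $K=J+2M-I$ using $|M-I|\le\sqrt{IJ}$ and then \eqref{Lm.2}. That gives only $K\le(\sqrt J+\sqrt I)^2$, with a constant exceeding $4$, whereas the stated constant tends to $1$ as $q\to\infty$; no division by $q-l-1$ occurs on that route. The missing idea is a \emph{partial} substitution of the cross term. Set $P:=M-I=\int_\Omega\phi^{-l}|\nabla\phi|^{q-2}\nabla\phi\cdot D^2\ln\phi\,\nabla\phi$. Before any estimate, your $\psi$-integration by parts is the exact identity $(q-l)I=-qP-Q$, and expanding $|D^2\phi|^2$ gives $K=J+2P+I$. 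Write $qP=(l+1)P+(q-l-1)P$ and replace only the second piece by $\frac{q-l-1}{2}(K-J-I)$. This gives
\[
\frac{q-l+1}{2}\,I+\frac{q-l-1}{2}\,K=\frac{q-l-1}{2}\,J-(l+1)P-Q\le\frac{q-l-1}{2}\,J+(l+1+\sqrt n)\sqrt{IJ}.
\]
The hypothesis $q>l+1$ enters exactly here, to make the coefficient of $K$ positive. It does not enter in your $\psi$-computation, whose coefficient is $q-l$ and which needs only $q>l$. Dropping $K$ and using $2ab\le\frac12a^2+2b^2$ gives \eqref{Lm.2}. For \eqref{Lm.3}, absorb $(l+1+\sqrt n)\sqrt{IJ}\le\frac{q-l+1}{2}I+\frac{(l+1+\sqrt n)^2}{2(q-l+1)}J$ into the left side and divide by $\frac{q-l-1}{2}$; this gives exactly the stated constant.
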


The next lemma provides an elementary yet very useful differential inequality, which allows us to exploit the spatio-temporal information of the solutions. For a detailed proof, we refer the reader to \cite[Lemma 3.4]{Winkler+2019}.
\begin{lemma}\label{ODI}
    Let \(\mu>0\), \(t_0\in\mathbb R\), \(T\in(t_0,\infty]\), and suppose that the nonnegative function
    \(h\in L^1_{\rm loc}([t_0,T))\) has the property that there exist \(\tau\in(0,T-t_0)\) and \(\gamma>0\) such that
    \[
    \frac1{\tau}\int_t^{t+\tau}h(s)\,ds \leq \gamma     \qquad\text{for all }t\in[t_0,T-\tau).
    \]
    Assume that \(y\in C^0([t_0,T))\cap C^1((t_0,T))\) satisfies
    \[
        y'(t)+\mu y(t)\leq h(t) \qquad\text{for all }t\in(t_0,T).
    \]
    Then
    \[
        y(t)\leq y(t_0)e^{-\mu (t-t_0)} +\frac{\gamma \tau}{1-e^{-\mu \tau}}  \qquad\text{for all }t\in[t_0,T).
    \]
\end{lemma}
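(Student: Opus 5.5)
The statement just above is Lemma~\ref{ODI}, an elementary ODE comparison result, and I will prove it with a discrete Gronwall argument on blocks of length \(\tau\).

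\textbf{Step 1: integrating factor.} Multiplying \(y'+\mu y\le h\) by \(e^{\mu s}\) and integrating from \(t_1\) to \(t\) gives, for \(t_0\le t_1\le t<T\),
\[
y(t)\le y(t_1)e^{-\mu(t-t_1)}+\int_{t_1}^{t}e^{-\mu(t-s)}h(s)\,ds .
\]
This is legitimate because \(y\) is continuous on \([t_0,T)\) and \(C^1\) inside, and \(h\in L^1_{\rm loc}\).

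\textbf{Step 2: split the integral.} Fix \(t\in[t_0,T)\) and take \(t_1=t_0\). Cut the interval \([t_0,t]\) into pieces going backwards from \(t\): \(I_j=[t-(j+1)\tau,\,t-j\tau]\cap[t_0,t]\) for \(j=0,1,\dots,N\), where \(N\) is the smallest integer with \(t-(N+1)\tau\le t_0\).

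On \(I_j\) we have \(e^{-\mu(t-s)}\le e^{-\mu j\tau}\), since \(t-s\ge j\tau\).

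Each \(I_j\) lies inside a window \([t',t'+\tau]\) with \(t'\in[t_0,T-\tau)\). For \(j<N\) take \(t'=t-(j+1)\tau\). For the last, possibly truncated, piece take \(t'=t_0\) and use \(h\ge0\). The hypothesis then gives \(\int_{I_j}h\le\gamma\tau\).

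\textbf{Obstacle: windows near \(T\).} When \(T<\infty\) and \(t>T-\tau\), the window \([t-\tau,t]\) may have left endpoint \(t-\tau\ge T-\tau\), where the hypothesis does not apply directly. This is the only delicate point, and I handle it by shifting. For the piece \(I_0\) nearest \(t\), replace \([t-\tau,t]\) by \([t'',t''+\tau]\) with \(t''\in[t_0,T-\tau)\) chosen close to \(T-\tau\), so that the window covers \([t-\tau,t]\) up to an arbitrarily small set. Using \(h\ge0\) and \(h\in L^1_{\rm loc}\), this still gives \(\int_{I_0}h\le\gamma\tau\), by a limiting argument \(t''\uparrow T-\tau\) and absolute continuity of the integral.

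The remaining pieces are handled in the same way, sliding all windows by the same offset. Alternatively, one can apply Step 1 from \(t_1=t-\tau\) (or the nearest admissible point) and bound \(y(t_1)\) by the already established estimate at earlier times.

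\textbf{Step 3: sum.} Adding up the pieces,
\[
\int_{t_0}^{t}e^{-\mu(t-s)}h(s)\,ds\le \sum_{j\ge0}\gamma\tau e^{-\mu j\tau}=\frac{\gamma\tau}{1-e^{-\mu\tau}} .
\]
Combined with \(y(t_0)e^{-\mu(t-t_0)}\) from Step 1, this gives the asserted bound.
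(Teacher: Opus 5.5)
Your proof is correct once one paragraph is removed. It is the natural direct argument for exactly the exponentially weighted bound stated here: an integrating factor, blocks of length $\tau$ counted backwards from $t$, the weight bound $e^{-\mu(t-s)}\le e^{-\mu j\tau}$ on the $j$-th block, and a geometric series. The paper gives no proof of its own; it only refers to \cite[Lemma~3.4]{Winkler+2019}. If I recall that lemma correctly, it is formulated with a max-type bound, $y(t)\le\max\{y(t_0)+\gamma\tau,\ \frac{\gamma}{\mu}+2\gamma\tau\}$ in the present notation. That bound neither implies nor is implied by the estimate stated here. So a self-contained argument like yours is what actually justifies the lemma in the form written. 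For the application in Lemma~\ref{L3}, where only boundedness of $F$ is needed, either version would do.

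The paragraph to delete is the one headed ``Obstacle'', because its premise is false. Every $t$ under consideration satisfies $t<T$, hence $t-\tau<T-\tau$, so no backward window can have its left endpoint at or beyond $T-\tau$. Concretely:
\begin{itemize}
\item For $j<N$, the minimality of $N$ gives $t-(j+1)\tau>t_0$, while $t-(j+1)\tau\le t-\tau<T-\tau$. Thus $t'=t-(j+1)\tau\in(t_0,T-\tau)$, and the hypothesis applies verbatim to $I_j=[t',t'+\tau]$.
\item For the last block, $t-(N+1)\tau\le t_0$ gives $I_N\subset[t_0,t_0+\tau]$. Here $t_0<T-\tau$ is exactly the assumption $\tau<T-t_0$, so $h\ge0$ yields $\int_{I_N}h\le\gamma\tau$.
\end{itemize}
No sliding of windows, limiting procedure in $t''$, or restarting from $t-\tau$ is needed, so the step that ``covers up to an arbitrarily small set'' should simply go. The only other point worth writing out is the passage to $t_1=t_0$ in Step~1. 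Integrate over $[t_0+\delta,t]$, where $e^{\mu s}y$ is $C^1$, and let $\delta\downarrow0$ using continuity of $y$ at $t_0$ and $h\in L^1(t_0,t)$. You have essentially indicated this already.
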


The following lemma is the key tool for handling the singular behavior of \(v\) near zero. For \(p\ge 3\), the result follows immediately from Proposition~1.3 in \cite{Kurt+Shen} or from Proposition~3.1 in \cite{Kurt}. The broader case \(p>1\) is covered by a different proof given in \cite[Lemma~2.4]{minh-preprint-2}.

 \begin{lemma} \label{LK-1}
    Let \(\Omega \subset \mathbb{R}^n\) with \(n \geq 2\) be a bounded domain with smooth boundary. For any \(p >1\), there exist positive constants \(M_1=M_1(p)\) and \(M_2=M_2(p)\) such that the following inequality holds
    \begin{align*} 
        \int_\Omega \frac{|\nabla w|^{2p}}{w^p} \leq M_1 \int_\Omega |\Delta w|^p +M_2\int_\Omega w^p,
    \end{align*}
    for any positive function \(w \in C^2(\bar{\Omega})\) satisfying \(\frac{\partial w}{\partial \nu}=0\) on \(\partial \Omega\).
\end{lemma}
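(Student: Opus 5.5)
The plan is to reduce Lemma~\ref{LK-1} to a pointwise identity for \(\sqrt{w}\) and a boundary-free integration by parts, and then to use elliptic \(L^p\) regularity for the Neumann problem. The natural substitution is \(w=z^2\) with \(z=\sqrt w>0\). Then
\[
\nabla w = 2z\nabla z,\qquad \frac{|\nabla w|^{2p}}{w^p}=4^p|\nabla z|^{2p},\qquad \Delta w = 2z\Delta z+2|\nabla z|^2,
\]
and \(\partial_\nu z=0\) on \(\partial\Omega\). So the claim is equivalent to a bound of the form
\[
\int_\Omega |\nabla z|^{2p}\le C\int_\Omega |z\Delta z+|\nabla z|^2|^p + C\int_\Omega z^{2p}.
\]
In the variable \(z\), the quantity \(|\nabla z|^{2p}\) has exactly the homogeneity of \(|\Delta w|^p\) and \(w^p\), which is what makes the statement scale-consistent.

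To get this bound, I will test against \(|\nabla z|^{2p-2}\) and integrate by parts. Write \(\int|\nabla z|^{2p}=\int |\nabla z|^{2p-2}\nabla z\cdot\nabla z\). Using \(z\Delta z = \Delta w/2-|\nabla z|^2\), one computes
\[
\int_\Omega z|\nabla z|^{2p-2}\Delta z = -\int_\Omega |\nabla z|^{2p} - (2p-2)\int_\Omega z|\nabla z|^{2p-4}\,\nabla z\cdot D^2z\,\nabla z .
\]
The boundary terms vanish by the Neumann condition. This yields
\[
2\int|\nabla z|^{2p} = \tfrac12\int|\nabla z|^{2p-2}\Delta w - (2p-2)\int z|\nabla z|^{2p-4}\nabla z\cdot D^2 z\nabla z .
\]
The first term on the right is handled by Young's inequality: it is at most \(\varepsilon\int|\nabla z|^{2p}+C_\varepsilon\int|\Delta w|^p\).

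The second term is the main obstacle, since it involves \(D^2z\), which is not controlled directly. My first attempt will be to rewrite \(z\,D^2z = \tfrac12 D^2 w - \nabla z\otimes\nabla z\). Then
\[
z\,\nabla z\cdot D^2 z\,\nabla z = \tfrac12\nabla z\cdot D^2w\,\nabla z - |\nabla z|^4 .
\]
The \(-|\nabla z|^4\) piece contributes \(+(2p-2)\int|\nabla z|^{2p}\) with an unfavourable sign, so a naive estimate does not close. Instead I will bound the \(D^2w\) term by Young's inequality as \(\varepsilon\int|\nabla z|^{2p}+C\int|D^2w|^p\). I then invoke the Calderón–Zygmund estimate for the Neumann problem, \(\|D^2w\|_{L^p}\le C(\|\Delta w\|_{L^p}+\|w\|_{L^p})\), which is valid for \(p>1\) on smooth bounded domains. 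This is exactly where \(p>1\) is needed.

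To avoid the bad-sign term altogether, I would bypass the identity and use a Gagliardo–Nirenberg/Moser-type interpolation in the form of Lemma~\ref{Lm}, taking \(\phi=w\), \(l=p-2\), \(q=p\). That gives \(\int w^{-p}|\nabla w|^{2p}\lesssim\int w^{2-p}|\nabla w|^{2p-4}|D^2\ln w|^2\). However, this requires \(p\ge 2\) and is awkward for \(1<p<2\). So for general \(p>1\) I would rely on the first route: absorb the \(|\nabla z|^{2p}\) terms whose coefficients are controllable, and treat the remaining \(\int|\nabla z|^{2p}\) by an interpolation in the form of Lemma~\ref{GN}, bounding \(\|\nabla z\|_{2p}\) between \(\|D^2 w\|_p\) and \(\|z\|_{2p}\). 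Combining these steps with the Neumann elliptic estimate gives the constants \(M_1\) and \(M_2\). In the end one only needs \(\int|\nabla z|^{2p}\le C\|D^2w\|_p^{p}+C\|w\|_p^p\), and the elliptic estimate then finishes the proof.
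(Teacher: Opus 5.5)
There is a genuine gap, and it comes from an algebra slip. Your substitution $z=\sqrt w$ and your integration-by-parts identity for $\int_\Omega z|\nabla z|^{2p-2}\Delta z$ are correct; the field $z|\nabla z|^{2p-2}\nabla z$ is $C^1$ for $p>1$ and the boundary term vanishes. The next line is wrong. Writing $z\Delta z=\frac12\Delta w-|\nabla z|^2$ on the left produces $-\int_\Omega|\nabla z|^{2p}$, which cancels exactly with the $-\int_\Omega|\nabla z|^{2p}$ on the right; no term $2\int_\Omega|\nabla z|^{2p}$ arises. Inserting $zD^2z=\frac12D^2w-\nabla z\otimes\nabla z$ into what remains gives
\[
(2p-2)\int_\Omega|\nabla z|^{2p}=\frac12\int_\Omega|\nabla z|^{2p-2}\Delta w+(p-1)\int_\Omega|\nabla z|^{2p-4}\,\nabla z\cdot D^2w\,\nabla z .
\]
So the $-|\nabla z|^4$ piece enters with the \emph{favourable} sign, and its coefficient $2p-2$ is positive precisely because $p>1$. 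Equivalently, without substituting, $\int_\Omega|\nabla w|^{2p}w^{-p}=\frac{1}{p-1}\int_\Omega w^{1-p}\,\nabla\cdot(|\nabla w|^{2p-2}\nabla w)$. From here the proof closes at once. Use $|\Delta w|\le\sqrt n|D^2w|$ and H\"older with exponents $\frac{p}{p-1}$ and $p$. The right-hand side is then at most $\bigl(\frac{\sqrt n}{2}+p-1\bigr)\bigl(\int_\Omega|\nabla z|^{2p}\bigr)^{\frac{p-1}{p}}\|D^2w\|_{L^p(\Omega)}$. Hence $\int_\Omega|\nabla w|^{2p}w^{-p}\le\bigl(\frac{\sqrt n+2p-2}{p-1}\bigr)^p\|D^2w\|_{L^p(\Omega)}^p$. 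The Neumann $W^{2,p}$ estimate $\|D^2w\|_{L^p}\le C(\|\Delta w\|_{L^p}+\|w\|_{L^p})$, valid for $1<p<\infty$, then finishes the proof.

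As written, your proposal does not contain this argument. Believing the identity leaves an uncontrolled multiple of $\int_\Omega|\nabla z|^{2p}$, you bound ``the remaining $\int_\Omega|\nabla z|^{2p}$'' by an interpolation of the type in Lemma~\ref{GN}, between $\|D^2w\|_{L^p}$ and $\|z\|_{L^{2p}}$. No such interpolation is available. Lemma~\ref{GN} relates a single function to its own gradient. You would instead need to control $\nabla\sqrt w$ by $D^2w$, which is a nonlinear, weighted estimate whose constants must not depend on $\inf w$. The inequality $\int_\Omega|\nabla z|^{2p}\le C\|D^2w\|_{L^p}^p+C\|w\|_{L^p}^p$ in your last sentence is, modulo Calder\'on--Zygmund, the lemma itself, so the argument is circular at its central step. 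The detour via Lemma~\ref{Lm} does not rescue it. Matching $w^{-p}|\nabla w|^{2p}$ requires $l=p-2$ and $q=2p-2$ (not $q=p$), hence $p>2$. The resulting right-hand side $\int_\Omega w^{2-p}|\nabla w|^{2p-4}|D^2\ln w|^2$ is not bounded by $\int_\Omega|\Delta w|^p+\int_\Omega w^p$. Correcting the sign error supplies exactly the missing step, and the tools you list (the $D^2w$ rewriting, Young/H\"older, the Neumann elliptic estimate) then suffice. Note that the paper itself only imports this lemma by citation, so the comparison here is with this standard argument rather than a proof in the text.
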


  Next, we recall a standard \(L^p\) regularity result for parabolic equations as established in \cite[Lemma 4.1]{DM}.
\begin{lemma} \label{C52.Para-Reg}
Let \(\Omega \subset \mathbb{R}^n\) with \(n \geq 2\) be a bounded domain with smooth boundary. Assume that \(p\geq 1\) and \(q \geq 1\) satisfy 
\begin{equation*}
    \begin{cases}
     q &< \frac{np}{n-p},  \qquad \text{when } p<n,\\
     q &< \infty, \qquad \text{when } p=n,\\
      q &= \infty, \qquad \text{when } p>n.\\
     \end{cases}
\end{equation*}
{Assume that $V_0 \in W^{1,q}(\Omega)$, $\mu>0$ and $T\in (0, \infty]$. Let $f\in C^0 \left ( \bar{\Omega}\times[0, T)  \right )$, and let $V\in C^0 \left ( \bar{\Omega}\times[0,  T)  \right ) \cap C^{2,1}\left ( {\Omega}\times(0,  T)  \right ) $ be a classical solution of}
\begin{equation}\label{C52.parabolic-equation}
    \begin{cases}
     V_t = \Delta V  - \mu V + f &\text{in } \Omega \times (0,T), \\ 
\frac{\partial V}{\partial \nu} =  0 & \text{on }\partial \Omega \times (0,T),\\ 
 V(\cdot,0)=V_0   & \text{in } \Omega.
    \end{cases}
\end{equation}
 If \(f \in L^\infty \left ( (0,T);L^p(\Omega) \right ) \), then \(V  \in L^\infty \left ( (0,T);W^{1,q}(\Omega) \right )\).
\end{lemma}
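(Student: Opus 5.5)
The statement to prove is Lemma~\ref{C52.Para-Reg}, the standard $L^p$--$W^{1,q}$ regularity for the inhomogeneous Neumann heat equation with damping. The natural approach is through the variation-of-constants representation and smoothing estimates for the Neumann heat semigroup $(e^{t\Delta})_{t\ge0}$. Since $V$ is a classical solution, we have
\[
V(\cdot,t)=e^{-\mu t}e^{t\Delta}V_0+\int_0^t e^{-\mu(t-s)}e^{(t-s)\Delta}f(\cdot,s)\,ds,\qquad t\in(0,T).
\]
It then suffices to bound $\|V(\cdot,t)\|_{L^q(\Omega)}$ and $\|\nabla V(\cdot,t)\|_{L^q(\Omega)}$ uniformly in $t$.

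\textbf{Step 1: the homogeneous part.} For the gradient we use the estimate $\|\nabla e^{t\Delta}w\|_{L^q}\le C\|\nabla w\|_{L^q}$ for $w\in W^{1,q}(\Omega)$, together with $\|e^{t\Delta}w\|_{L^q}\le \|w\|_{L^q}$. These give $\|e^{-\mu t}e^{t\Delta}V_0\|_{W^{1,q}}\le C\|V_0\|_{W^{1,q}}$ for all $t$. This is where the hypothesis $V_0\in W^{1,q}(\Omega)$ enters. For $q=\infty$ one uses the $W^{1,\infty}$ analogue, which holds in the same form.

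\textbf{Step 2: the Duhamel integral.} Here we apply the Winkler-type smoothing estimate
\[
\|\nabla e^{\tau\Delta}w\|_{L^q}\le C\bigl(1+\tau^{-\frac12-\frac n2(\frac1p-\frac1q)}\bigr)e^{-\lambda_1\tau}\|w\|_{L^p},\qquad 1\le p\le q\le\infty,
\]
together with the corresponding $L^p\to L^q$ bound for $e^{\tau\Delta}$ itself. The latter carries the exponent $-\frac n2(\frac1p-\frac1q)$ and no decay factor, but the factor $e^{-\mu\tau}$ compensates for that. Writing $M:=\sup_{s}\|f(\cdot,s)\|_{L^p}$, the gradient of the Duhamel term is bounded by
\[
CM\int_0^\infty\bigl(1+\tau^{-\frac12-\frac n2(\frac1p-\frac1q)}\bigr)e^{-\mu\tau}\,d\tau .
\]
This integral converges exactly when $\frac12+\frac n2(\frac1p-\frac1q)<1$, that is, when $\frac1q>\frac1p-\frac1n$. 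Unpacking this condition reproduces the three cases of the hypothesis. For $p<n$ it reads $q<\frac{np}{n-p}$. For $p=n$ it holds for every finite $q$. For $p>n$ it holds even with $q=\infty$. The undifferentiated term has the weaker singularity $\tau^{-\frac n2(\frac1p-\frac1q)}$, which is integrable under the same conditions. If $q<p$, we simply use $L^p\hookrightarrow L^q$ and take $q=p$ in the estimates.

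\textbf{Step 3: combining.} Adding the two parts gives $\sup_{t<T}\|V(\cdot,t)\|_{W^{1,q}}<\infty$, since all constants are independent of $T$. The only genuinely delicate point is the availability of the semigroup estimates, in particular the $W^{1,q}$ stability of $e^{t\Delta}$ at $q=\infty$ and the gradient smoothing up to the endpoint $q=\infty$ when $p>n$. These are standard results in the form given by Winkler's Neumann heat semigroup lemma, and I would cite them rather than reprove them. The remainder, checking that the time integral converges, is an exponent computation.
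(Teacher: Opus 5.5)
The paper gives no proof of this lemma and simply quotes \cite[Lemma 4.1]{DM}. Your variation-of-constants argument is the standard route to such a statement, and Steps~2 and~3 are sound. The gradient smoothing estimate of \cite[Lemma 1.3]{Winkler-2010} is indeed available for all $1\le p\le q\le\infty$. The integrability condition $\frac1q>\frac1p-\frac1n$ reproduces exactly the three cases of the hypothesis. The undifferentiated term is handled by the $L^p$--$L^q$ bound together with $e^{-\mu\tau}$, and this factor also makes every bound independent of $T$.

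The weak point is Step~1, which is precisely the step you defer to the literature. In \cite[Lemma 1.3]{Winkler-2010} the $W^{1,q}$-stability estimate $\|\nabla e^{t\Delta}w\|_{L^q(\Omega)}\le Ce^{-\lambda_1 t}\|\nabla w\|_{L^q(\Omega)}$ is asserted only for $2\le q<\infty$. So that citation does not yield $\sup_{t}\|e^{-\mu t}e^{t\Delta}V_0\|_{W^{1,q}(\Omega)}<\infty$ for $q=\infty$, which is exactly the case invoked in Section~\ref{S6} (with $p=n+1$ and $v_0\in W^{1,\infty}(\Omega)$). Nor does it cover $1\le q<2$. Since the homogeneous part involves only $V_0$, this estimate cannot be bypassed and has to be proved.

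For $q=\infty$ one can argue as follows.
\begin{itemize}
\item If $t\ge 1$, the smoothing estimate with both exponents equal to $\infty$ gives $\|\nabla e^{t\Delta}V_0\|_{L^\infty(\Omega)}\le C\|V_0\|_{L^\infty(\Omega)}$.
\item If $t\in(0,1)$, set $w:=e^{t\Delta}V_0$ and $\phi:=|\nabla w|^2$. Then $\phi_t=\Delta\phi-2|D^2w|^2\le\Delta\phi$ in $\Omega$, and $\frac{\partial\phi}{\partial\nu}\le c\phi$ on $\partial\Omega$ by \cite[Lemma 4.2]{Souplet-13}. For any $h\in C^2(\bar\Omega)$ with $\frac{\partial h}{\partial\nu}\ge c$ on $\partial\Omega$, the function $e^{-h}\phi$ has nonpositive normal derivative and satisfies a linear parabolic inequality with bounded coefficients. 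The maximum principle then gives $\|\nabla e^{t\Delta}V_0\|_{L^\infty(\Omega)}\le Ce^{Ct}\|\nabla V_0\|_{L^\infty(\Omega)}$. This holds first for smooth $V_0$ with $\frac{\partial V_0}{\partial\nu}=0$, and then in general by approximation. On convex domains one may take $c=0$ by \cite{Lion}.
\end{itemize}

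The range $1\le q<2$ likewise needs its own justification. For $1<q<2$ this can be done, e.g., through the identification of $W^{1,q}(\Omega)$ with the domain of the square root of the Neumann realization of $\mu-\Delta$ in $L^q(\Omega)$. In the paper's only use of such $q$, Lemma~\ref{v'}, one has $v_0\in W^{1,\infty}(\Omega)$, so the case $q=2$ combined with H\"older's inequality suffices there.
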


\section{Local existence and some estimates }\label{S3}
In this section, we establish the local well-posedness of the system \eqref{1} and collect several useful estimates for solutions that will be applied in subsequent sections. We begin with the following lemma, which establishes the local existence of classical solutions. Since the proof follows standard arguments based on the contraction mapping principle (see \cite{Tao+Winkler2011} for nonlinear diffusion and \cite{FWY} for signal-dependent sensitivity), we omit the details here to avoid redundancy.

\begin{lemma} \label{local}
    Let \(\Omega \subset \mathbb{R}^n\), \(n \geq 1\), be a bounded domain with smooth boundary.  Assume that \(a>0\), \(m>1\), \(\chi>0\), \(k>0\), and that \eqref{initial} holds. Then there exist a maximal existence time \(T_{\mathrm{max}} \in (0,\infty]\) and a unique pair of positive functions \((u,v)\) such that
    \[
        u, v \in C^0 \left(\overline{\Omega} \times [0,T_{\mathrm{max}})\right)
        \cap C^{2,1} \left(\overline{\Omega} \times (0,T_{\mathrm{max}})\right),
    \]
    and such that \((u,v)\) solves the system \eqref{1} with the boundary conditions \eqref{bdry} in the classical sense. Moreover, if \(T_{\mathrm{max}} < \infty\), then
    \begin{align}
        \limsup_{t \to T_{\mathrm{max}}}
        \left\{
        \| u(\cdot,t) \|_{L^\infty(\Omega)}
        +
        \| v(\cdot,t) \|_{W^{1,\infty}(\Omega)}
        \right\}
        = \infty. \label{local-exist-para-1}
    \end{align}
\end{lemma}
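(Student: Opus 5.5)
The plan is to follow the standard fixed-point scheme for quasilinear chemotaxis systems, combining the treatment of nonlinear diffusion in \cite{Tao+Winkler2011} with the handling of the singular factor $v^{-k}$ from \cite{FWY}.

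\textbf{Step 1: a positive lower bound for $v$.} Since $u\ge 0$ and $v_0>0$ on $\bar\Omega$, comparison in the second equation of \eqref{1} gives
\[
v(x,t)\ge e^{-t}\min_{\bar\Omega}v_0>0 .
\]
So on any finite interval $[0,T]$, the sensitivity $v^{-k}$ is smooth and bounded. Likewise, since $a>0$, the diffusion coefficient $(u+a)^{m-1}\ge a^{m-1}>0$ whenever $u\ge0$. The system is therefore uniformly parabolic and nondegenerate on the relevant set.

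\textbf{Step 2: the fixed-point map.} Fix $R>\|u_0\|_{L^\infty}+1$ and a small $T\in(0,1)$. Let
\[
X:=\bigl\{\bar u\in C^0(\bar\Omega\times[0,T]) : \|\bar u\|_{L^\infty}\le R\bigr\}.
\]
For $\bar u\in X$, let $v$ solve $v_t=\Delta v-v+\bar u$ with Neumann data and initial value $v_0$. Semigroup estimates give $\|v\|_{L^\infty((0,T);W^{1,\infty})}\le C(R)$, and $v\ge e^{-1}\min v_0$ provided $\bar u\ge0$; to avoid requiring this, use $\bar u_+$ in the $v$-equation. Next, let $u$ solve the linear equation
\[
u_t=\nabla\cdot\bigl((|\bar u|+a)^{m-1}\nabla u\bigr)-\chi\nabla\cdot\Bigl(u\,\frac{\nabla v}{v^k}\Bigr),
\]
with $u(\cdot,0)=u_0$ and no-flux boundary conditions. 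This is a divergence-form equation with bounded measurable coefficients, elliptic constants $a^{m-1}$ and $(R+a)^{m-1}$, and bounded drift. Ladyzhenskaya–Ural'tseva / De Giorgi–Nash Hölder estimates bound $\|u\|_{L^\infty}\le \|u_0\|_{L^\infty}+CT^{\beta}$ and give a Hölder bound $\|u\|_{C^{\alpha,\alpha/2}}\le C(R)$. Taking $T$ small, the map $\Phi(\bar u)=u$ sends $X$ into itself and is compact. Schauder's theorem then yields a fixed point. Alternatively, Banach's theorem can be applied in a space of Lipschitz-type norms, since uniqueness is needed anyway.

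\textbf{Step 3: regularity, positivity and uniqueness.} Bootstrap using Hölder regularity of the coefficients and parabolic Schauder theory to obtain $u,v\in C^{2,1}(\bar\Omega\times(0,T])$. The strong maximum principle, applied to the nondivergence form of the $u$-equation, together with $\int_\Omega u_0>0$, gives $u>0$ for $t>0$. Hence $\bar u_+=\bar u$, and the fixed point solves \eqref{1}. For uniqueness, take two solutions with common bounds $u_i\le R$ and $v_i\ge\delta>0$, test the difference of the $u$-equations with $u_1-u_2$, and use the $W^{1,2}$ stability of the $v$-equation. The difference of the diffusion terms is controlled by the Lipschitz continuity of $s\mapsto (s+a)^{m-1}$ on $[0,R]$, and the difference of the sensitivities by the Lipschitz continuity of $s\mapsto s^{-k}$ on $[\delta,\infty)$. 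Gronwall's inequality then gives $u_1=u_2$.

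\textbf{Step 4: extension.} The existence time depends only on upper bounds for $\|u_0\|_{L^\infty}$ and $\|v_0\|_{W^{1,\infty}}$, and on a lower bound for $\min v_0$. By Step 1, this lower bound stays positive on every finite interval. A standard continuation argument therefore gives a maximal $T_{\max}$ together with the blow-up criterion \eqref{local-exist-para-1}.

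\textbf{Main obstacle.} The delicate step is the diffusion term in the uniqueness estimate: it produces $\int|\nabla u_2|\,|u_1-u_2|\,|\nabla(u_1-u_2)|$, which requires $\nabla u_2\in L^\infty$ up to $t=0$. With merely continuous $u_0$ this is not available. I would handle it by passing to the Kirchhoff variable $\Phi(u)=\int_0^u(s+a)^{m-1}\,ds$, or by a duality/$L^1$-contraction argument, following \cite{Tao+Winkler2011}.
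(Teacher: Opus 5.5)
Your outline is the standard fixed-point-plus-continuation scheme that the paper invokes without proof; it only refers to \cite{Tao+Winkler2011} and \cite{FWY}. The one structural difference is that the paper speaks of the contraction mapping principle, whereas you get existence from Schauder's theorem and prove uniqueness separately. For quasilinear diffusion with merely continuous $u_0$, your split is the more natural one. A contraction estimate for $\bar u\mapsto u$ would have to control $\bigl((|\bar u_1|+a)^{m-1}-(|\bar u_2|+a)^{m-1}\bigr)\nabla u$, which needs exactly the gradient information you correctly note is unavailable near $t=0$. In your Schauder route, the existence time depends only on $\|u_0\|_{L^\infty(\Omega)}$, $\|v_0\|_{W^{1,\infty}(\Omega)}$ and $\min_{\bar\Omega}v_0$, which is precisely what gives the criterion \eqref{local-exist-para-1} in those norms. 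A contraction argument, where it can be run in stronger spaces, gives existence and uniqueness in one stroke, but with an existence time depending on stronger norms of the data.

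Two details should be adjusted. First, for $u_0\in C^0(\bar\Omega)$ there is no H\"older bound up to $t=0$. Compactness of your map in $C^0(\bar\Omega\times[0,T])$ must instead come from interior-in-time H\"older estimates plus equicontinuity at $t=0$. You get the latter by approximating $u_0$ uniformly by H\"older data and using the $L^\infty$-stability of the linear problem with respect to the initial datum. Second, the Kirchhoff substitution alone does not close the uniqueness estimate; the $H^{-1}$ duality argument does. Since $\int_\Omega(u_1-u_2)=0$, let $-\Delta\psi=u_1-u_2$ with $\partial_\nu\psi=0$ and $\int_\Omega\psi=0$, and set $\Psi(s):=\int_0^s(\sigma+a)^{m-1}\,d\sigma$. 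Then $(\Psi(u_1)-\Psi(u_2))(u_1-u_2)\ge a^{m-1}(u_1-u_2)^2$. Using $u_i\le R$, $v_i\ge\delta$ and $\|\nabla v_i\|_{L^\infty(\Omega)}\le C$ (valid up to $t=0$ since $v_0\in W^{1,\infty}(\Omega)$), one gets
\[
\frac{d}{dt}\int_\Omega|\nabla\psi|^2\le -a^{m-1}\int_\Omega(u_1-u_2)^2+C\int_\Omega|\nabla\psi|^2+C\|v_1-v_2\|_{W^{1,2}(\Omega)}^2 .
\]
The $v$-equation gives $\frac{d}{dt}\|v_1-v_2\|_{W^{1,2}(\Omega)}^2\le 2\int_\Omega(u_1-u_2)^2+C\|v_1-v_2\|_{W^{1,2}(\Omega)}^2$. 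Adding this with weight $a^{m-1}/2$ and applying Gronwall's inequality yields $u_1=u_2$ without any bound on $\nabla u_i$.
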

From now on, \((u,v)\) will denote the maximal classical solution of \eqref{1} with boundary condition \eqref{bdry} and initial data \eqref{initial}, as specified in Lemma \ref{local}. We also assume throughout that all hypotheses in Lemma \ref{local} are satisfied. The following lemma gives us the uniform in time \(L^1\) boundedness of solutions for \(u\) and \(v\).

\begin{lemma} \label{L1est}
    For any \(t\in (0,T_{\rm max})\), we have
    \begin{align} \label{L1est-1}
        \int_\Omega u(\cdot,t) = \int_\Omega u_0,
    \end{align}
    and 
    \begin{align} \label{L1est-2}
        \int_\Omega v(\cdot,t) \leq \max \left \{\int_\Omega u_0, \int_\Omega v_0 \right \}.
    \end{align}
\end{lemma}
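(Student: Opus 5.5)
Both identities follow from integrating the two equations of \eqref{1} over \(\Omega\) and using the Neumann condition \eqref{bdry}.

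\textbf{Mass conservation \eqref{L1est-1}.} Integrate the first equation of \eqref{1} over \(\Omega\) for \(t\in(0,T_{\max})\). Both terms on the right-hand side are in divergence form, so the divergence theorem turns them into boundary integrals of
\[
(u+a)^{m-1}\frac{\partial u}{\partial\nu}-\chi\frac{u}{v^k}\frac{\partial v}{\partial\nu}.
\]
This vanishes on \(\partial\Omega\) by \eqref{bdry}. The manipulations are legitimate for two reasons. First, \(u,v\in C^{2,1}(\bar\Omega\times(0,T_{\max}))\) by Lemma \ref{local}. Second, \(v>0\) on \(\bar\Omega\times[0,T_{\max})\), so \(v^{-k}\) is continuous up to the boundary. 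Hence \(\frac{d}{dt}\int_\Omega u=0\) on \((0,T_{\max})\). Since \(u\in C^0(\bar\Omega\times[0,T_{\max}))\), the map \(t\mapsto\int_\Omega u(\cdot,t)\) is continuous at \(t=0\). Therefore it is constant and equals \(\int_\Omega u_0\).

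\textbf{Bound on \(v\), \eqref{L1est-2}.} Set \(y(t):=\int_\Omega v(\cdot,t)\). Integrating the second equation gives \(\int_\Omega\Delta v=0\) by the Neumann condition. Together with \eqref{L1est-1} this yields the linear ODE
\[
y'(t)=-y(t)+\int_\Omega u_0 \qquad \text{for } t\in(0,T_{\max}),
\]
with \(y\) continuous on \([0,T_{\max})\) and \(y(0)=\int_\Omega v_0\). Solving explicitly,
\[
y(t)=e^{-t}\int_\Omega v_0+(1-e^{-t})\int_\Omega u_0 .
\]
This is a convex combination of \(\int_\Omega v_0\) and \(\int_\Omega u_0\), so it is at most their maximum, which gives \eqref{L1est-2}.

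\textbf{Remaining care.} No step is a real obstacle. The only point needing attention is justifying differentiation under the integral near \(t=0\), where the solution is merely continuous. To handle it, I would integrate the identities over \([t_0,t]\) with \(t_0>0\), and then let \(t_0\to0\) using continuity of \(u\) and \(v\) on \(\bar\Omega\times[0,T_{\max})\).
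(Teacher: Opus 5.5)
Your proof is correct and follows the paper's argument: integrate the first equation and use the no-flux condition to get mass conservation, then integrate the second equation to obtain the linear ODE \(y'=-y+\int_\Omega u_0\) for \(y(t)=\int_\Omega v(\cdot,t)\). The paper finishes with Gronwall's inequality, whereas you solve the ODE explicitly and note that \(y(t)\) is a convex combination of \(\int_\Omega v_0\) and \(\int_\Omega u_0\); this is the same step written out in full.
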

\begin{proof}
  Integrating the first equation in \eqref{1} over \(\Omega\) immediately gives \eqref{L1est-1}. Integrating the second equation in \eqref{1} over \(\Omega\), using \eqref{L1est-1}, and applying Gronwall's inequality then yields \eqref{L1est-2}.
\end{proof}

As a consequence of the previous lemma and Lemma \ref{C52.Para-Reg}, we can improve the regularity of \(v\) in the following lemma.

\begin{lemma} \label{v'}
  Let \(\Omega \subset \mathbb{R}^n\), \(n \geq 2\), be a smoothly bounded domain.  For any \(p \in \left[1, \frac{n}{(n-2)_+} \right )\), there exists \(C=C(p)>0\) such that 
  \begin{align*}
      \int_\Omega v^p(\cdot,t) \leq C \qquad \text{for all }t\in (0,T_{\rm max}).
  \end{align*}
\end{lemma}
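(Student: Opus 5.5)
The plan is to treat the second equation of \eqref{1} as a linear inhomogeneous heat equation with source \(u\), whose \(L^1\) norm is conserved by Lemma \ref{L1est}. We then apply $L^1$-to-$L^p$ smoothing of the Neumann heat semigroup.

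Write \(v(\cdot,t)=e^{t(\Delta-1)}v_0+\int_0^t e^{(t-s)(\Delta-1)}u(\cdot,s)\,ds\) by the variation-of-constants formula. The first term is bounded in every \(L^p\) norm, since \(v_0\in W^{1,\infty}(\Omega)\subset L^\infty(\Omega)\) and the Neumann heat semigroup is a contraction on \(L^\infty\). For the Duhamel term we use the standard smoothing estimate
\[
\|e^{\tau\Delta}w\|_{L^p(\Omega)}\le C\left(1+\tau^{-\frac n2\left(1-\frac1p\right)}\right)\|w\|_{L^1(\Omega)},\qquad \tau>0 .
\]
Together with the factor \(e^{-(t-s)}\) and \eqref{L1est-1}, this gives
\[
\|v(\cdot,t)\|_{L^p(\Omega)}\le C+C\int_\Omega u_0\int_0^\infty\left(1+\tau^{-\frac n2\left(1-\frac1p\right)}\right)e^{-\tau}\,d\tau .
\]
The integral converges exactly when \(\frac n2\left(1-\frac1p\right)<1\), that is, when \(p<\frac{n}{n-2}\) for \(n\ge3\) and for every \(p<\infty\) when \(n=2\). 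This matches the range \(p\in\left[1,\frac{n}{(n-2)_+}\right)\) in the statement. Since \(v>0\), we have \(\int_\Omega v^p=\|v\|_{L^p}^p\), which yields the claim uniformly on \((0,T_{\max})\).

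One could instead follow the paper's hint and invoke Lemma \ref{C52.Para-Reg} with source exponent \(1\). Its hypothesis "\(q<\frac{np}{n-p}\)" with \(p=1\) gives \(v\in L^\infty((0,T);W^{1,q})\) for \(q<\frac{n}{n-1}\). Sobolev embedding \(W^{1,q}\hookrightarrow L^{nq/(n-q)}\) then gives every exponent below \(\frac{n}{n-2}\), and for \(n=2\) it covers all finite exponents. The catch is that the source \(u\) is only in \(L^\infty((0,T);L^1)\), so one must check that Lemma \ref{C52.Para-Reg} genuinely allows \(p=1\) as stated. It does, since it assumes \(p\ge1\). We also need the initial datum \(V_0=v_0\in W^{1,\infty}\subset W^{1,q}\) and the continuity of \(u\) on \([0,T_{\max})\), both of which hold.

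The only potential obstacle is the borderline exponent. The time integral diverges at \(p=\frac n{n-2}\), so strict inequality is essential. The remaining steps are routine.
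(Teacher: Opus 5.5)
Your proof is correct. Your main argument takes a different route from the paper. The paper uses exactly the argument you sketch as an alternative: Lemma \ref{C52.Para-Reg} with source exponent $1$ (justified by \eqref{L1est-1}) gives $v\in L^\infty((0,T_{\max});W^{1,q}(\Omega))$ for $q<\frac{n}{n-1}$. It then applies the Sobolev inequality $\|v\|_{L^p}\le c_1\|\nabla v\|_{L^{np/(n+p)}}+c_2\|v\|_{L^1}$ together with \eqref{L1est-2}.

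Your primary route skips the gradient estimate and the Sobolev embedding. The Duhamel formula combined with the $L^1$--$L^p$ smoothing of $e^{\tau\Delta}$ gives the bound directly and uses only mass conservation. It also makes the threshold $\frac{n}{n-2}$ transparent, as the integrability condition for $\tau^{-\frac n2(1-\frac1p)}$ near $\tau=0$. The paper's route instead gives the stronger by-product of a uniform $W^{1,q}$ bound on $v$. The price is invoking Lemma \ref{C52.Para-Reg} as a black box, and that lemma is itself proved with Neumann heat semigroup estimates of the kind you use.

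One small point in your favor concerns the paper's choice $q=\frac{np}{n+p}$. This $q$ is actually below $1$ when $p<\frac{n}{n-1}$, which is outside the admissible range $q\ge 1$ of Lemma \ref{C52.Para-Reg}. Your phrasing of the alternative avoids this: you first obtain every exponent below $\frac{n}{n-2}$ and then use H\"older on the bounded domain for small $p$. Your semigroup argument never meets the issue at all.
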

\begin{proof}
    From \eqref{L1est-1}, we have \(u \in L^\infty \left ((0,T_{\rm max}); L^1(\Omega) \right )\). This, together with Lemma \ref{C52.Para-Reg}, implies that \(v \in L^\infty \left ((0,T_{\rm max}); W^{1,q}(\Omega) \right )\) for any \(1\leq q< \frac{n}{n-1}\). For any \(p \in \left[1, \frac{n}{(n-2)_+} \right )\), we set \(q= \frac{np}{n+p}\), and then find that \(q \in \left [1, \frac{n}{n-1} \right )\). In light of Sobolev's inequality and \eqref{L1est-2}, we can find \(c_1>0\) and \(c_2>0\) such that
    \begin{align*}
        \left \| v(\cdot,t) \right \|_{L^p(\Omega)} \leq c_1  \left \| \nabla v(\cdot,t) \right \|_{L^{\frac{np}{n+p}}(\Omega)} + c_2 \left \| v(\cdot,t) \right \|_{L^1(\Omega)} \leq c_2
    \end{align*}
    for all \(t\in (0,T_{\rm max})\), which completes the proof.
\end{proof}

The next lemma gives us a time-independent lower bound for \(v\), which was established in \cite[Lemma 3.1]{Fujie_2018}. Here, we want to emphasize that the convexity assumption on \(\Omega\) is not needed.
\begin{lemma} \label{low}
     Let \(\Omega \subset \mathbb{R}^n\) with \(n \geq 2\) be a smoothly bounded domain. There exists a positive constant \(\eta\) depending only on \(\inf_{\bar{\Omega}} v_0, \int_\Omega u_0\), and \(\Omega\) such that 
    \begin{align*}
        \inf_{x\in \Omega} v(x,t) \geq \eta \qquad \text{for all }t\in (0,T_{\rm max}). 
    \end{align*}
\end{lemma}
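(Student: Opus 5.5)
The lower bound for \(v\) depends only on the second equation of \eqref{1}, which is linear in \(v\) with source \(u\ge 0\) of conserved mass \(\int_\Omega u_0\) by \eqref{L1est-1}. The cross-diffusion structure of the first equation plays no role, so I would treat \(v\) as the solution of an inhomogeneous heat equation. By the variation-of-constants formula with the Neumann heat semigroup,
\[
v(\cdot,t)=e^{-t}e^{t\Delta}v_0+\int_0^t e^{-(t-s)}e^{(t-s)\Delta}u(\cdot,s)\,ds .
\]
Both terms are nonnegative by the comparison principle. I would bound them from below separately: the first for small times, the second for times away from zero.

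\emph{Short times.} Since \(e^{t\Delta}\) preserves lower bounds, the first term gives
\[
v(\cdot,t)\ge e^{-t}\inf_{\bar\Omega}v_0\ge e^{-1}\inf_{\bar\Omega}v_0 \qquad\text{for } t\le 1.
\]

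\emph{Long times.} For \(t\ge 1\) I would use the second term and a uniform lower bound on the Neumann heat kernel of a smooth bounded domain: for every \(\tau>0\) there is \(c_\tau>0\) with \(G(x,y,\tau)\ge c_\tau\) for all \(x,y\in\bar\Omega\). The needed estimate is
\[
v(x,t)\ge \int_{t-1}^{t} e^{-(t-s)}\int_\Omega G(x,y,t-s)\,u(y,s)\,dy\,ds .
\]
For \(t-s\in[1/2,1]\) we have \(G\ge c:=\min_{\tau\in[1/2,1]}c_\tau>0\), which holds by continuity and positivity of the kernel on a compact set. Combined with mass conservation this gives
\[
v(x,t)\ge \tfrac12 e^{-1}c\int_\Omega u_0 .
\]
Taking \(\eta\) as the minimum of the two bounds finishes the proof, with \(\eta\) depending only on \(\inf v_0\), \(\int_\Omega u_0\) and \(\Omega\).

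The main obstacle is the uniform positive lower bound on the Neumann heat kernel over the whole closure \(\bar\Omega\) without assuming convexity. My plan is to obtain it from the strong maximum principle and the Hopf lemma: \(G(\cdot,\cdot,\tau)\) is continuous and strictly positive on \(\bar\Omega\times\bar\Omega\) for each \(\tau>0\), and compactness then gives a positive minimum. Alternatively, it can be cited from known Gaussian lower bounds for Neumann kernels on smooth domains. If that is inconvenient, an alternative route is a comparison function: for \(t\ge1\), compare \(v\) with the solution \(w\) of \(w_t=\Delta w-w+u\) started from \(0\) at time \(t-1\), and bound \(w\) from below via the positive kernel, as above.
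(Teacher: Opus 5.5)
Your proof is correct. The paper gives no argument of its own for this lemma; it quotes \cite[Lemma 3.1]{Fujie_2018}. Your proposal is a complete, self-contained proof of that statement, built from three ingredients:
\begin{itemize}
\item Duhamel's formula;
\item the bound $e^{-t}\inf_{\bar\Omega}v_0$ for $t\le 1$;
\item for $t\ge 1$, a uniform positive lower bound $c=c(\Omega)$ for the Neumann heat kernel on $\bar\Omega\times\bar\Omega\times[\frac12,1]$, combined with conservation of $\int_\Omega u$.
\end{itemize}
Together these give $\eta=e^{-1}\min\{\inf_{\bar\Omega}v_0,\frac{c}{2}\int_\Omega u_0\}$.

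Your argument is also consistent with the paper's remark that convexity is not needed. You never use an explicit Gaussian lower bound of Li--Yau type. You use only strict positivity of the kernel up to the boundary and compactness. Positivity comes from the strong maximum principle in the interior, connectedness of $\Omega$, and Hopf's lemma, which rules out a boundary zero under the Neumann condition.

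If you want to avoid the boundary analysis entirely, there is an alternative. Let $\lambda_1>0$ be the first nonzero Neumann eigenvalue. The decay estimate $\|e^{\tau\Delta}w-\frac{1}{|\Omega|}\int_\Omega w\|_{L^\infty(\Omega)}\le C(1+\tau^{-n/2})e^{-\lambda_1\tau}\|w\|_{L^1(\Omega)}$ gives $e^{\tau\Delta}w\ge\frac{1}{2|\Omega|}\int_\Omega w$ for every nonnegative $w$ once $\tau\ge\tau_0(\Omega)$. Your splitting then goes through unchanged, with the window $[t-\tau_0-1,t-\tau_0]$ for $t\ge\tau_0+1$ and the short-time bound used for $t\le\tau_0+1$.
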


A new differential inequality is provided in the following lemma, which enables us to prove the \(L^p\) boundedness of solutions. By virtue of this inequality, we can also establish an \(L^\infty\) boundedness result for the case \(k=0\), thus providing a proof different from those given in \cite{TaoWinkler2012}.

\begin{lemma} \label{L}
    Let \(\Omega \subset \mathbb{R}^n\) with \(n \geq 2\) be a bounded convex domain with smooth boundary. For any \(p > \max \left \{ 20,  \sqrt{n}-4 \right \}\), the following holds
    \begin{align*}
        \frac{d}{dt} \int_\Omega \frac{|\nabla v|^{2p}}{v^{p-1}}+ (p+1)\int_\Omega \frac{|\nabla v|^{2p}}{v^{p-1}} + \frac{p}{18} \int_\Omega \frac{|\nabla v|^{2p+2}}{v^{p+1}} \leq C \int_\Omega u^{p+1} \qquad \text{for all }t\in (0,T_{\rm max}), 
    \end{align*}
    where 
    \begin{align*}
        C= \frac{\left ( (2p-1)(p-1)18^{\frac{p}{p+1}} +6p(2p+\sqrt{n}-2)18^{\frac{p-1}{2(p+1)}} \right )^{p+1}}{p+1}.
    \end{align*}
\end{lemma}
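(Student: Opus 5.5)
We differentiate the functional \(\int_\Omega |\nabla v|^{2p} v^{1-p}\) along the second equation in \eqref{1}, rewritten as \(v_t=\Delta v - v + u\). Set \(w:=|\nabla v|^2\), so that \(w_t = 2\nabla v\cdot\nabla v_t\) and \(\tfrac12\Delta w = \nabla v\cdot\nabla\Delta v + |D^2v|^2\). This gives
\begin{align*}
\frac{d}{dt}\int_\Omega \frac{w^p}{v^{p-1}}
= 2p\int_\Omega \frac{w^{p-1}\nabla v\cdot\nabla(\Delta v - v+u)}{v^{p-1}}
-(p-1)\int_\Omega \frac{w^p(\Delta v-v+u)}{v^p}.
\end{align*}
The \(-v\) contributions combine to \(-2p\int w^p v^{1-p} + (p-1)\int w^p v^{1-p} = -(p+1)\int w^pv^{1-p}\). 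This is exactly the linear damping term in the statement.

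For the Laplacian contributions, we replace \(\nabla v\cdot\nabla\Delta v\) by \(\tfrac12\Delta w-|D^2v|^2\) and integrate by parts. The boundary terms contain \(\partial_\nu |\nabla v|^2\), which is \(\le 0\) on a convex domain; this is where the convexity of \(\Omega\) enters. The result is the good term \(-2p\int w^{p-1}|D^2v|^2 v^{1-p}\) and the good term \(-p(p-1)\int w^{p-2}|\nabla w|^2 v^{1-p}\), together with mixed terms of the forms \(\int w^{p-1}\nabla w\cdot\nabla v\, v^{-p}\) and \(\int w^{p+1}v^{-p-1}\). The latter arise from differentiating \(v^{-p}\) in the \(\Delta v\) term after integrating it by parts. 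We then apply Young's inequality, using \(|\nabla w|\le 2|\nabla v||D^2v|\), and Lemma \ref{Lm} (inequality \eqref{Lm.1} with \(\phi=v\), \(l=p-1\), \(q=2p\), so \(q>l+1\)). Together these control \(\int |\nabla v|^{2p+2}v^{-p-1}\) by \(\left(\frac{2p+\sqrt n}{p}\right)^2\int |\nabla v|^{2p-2}|D^2v|^2 v^{1-p}\). The constant here is close to \(4\) for large \(p\), and this is why \(p\) must be large relative to \(\sqrt n\). The aim is to retain a net dissipation of the form \(c_p\int w^{p-1}|D^2v|^2v^{1-p}\), convert part of it via \eqref{Lm.1} into \(\frac{p}{18}\int w^{p+1}v^{-p-1}\) plus leftover dissipation, and keep that leftover for the \(u\)-terms. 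The numerology behind the factor \(18\) and the thresholds \(p>20\) and \(p>\sqrt n-4\) is fixed at this stage.

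For the \(u\)-terms, we integrate \(2p\int w^{p-1}\nabla v\cdot\nabla u\, v^{1-p}\) by parts to move the derivative off \(u\). This yields \(\int u\, w^{p-1}\Delta v\, v^{1-p}\), \(\int u\, w^{p-2}\nabla w\cdot\nabla v\, v^{1-p}\) and \(\int u\, w^{p}v^{-p}\). The boundary term vanishes because \(\partial_\nu v=0\). The remaining \(u\)-term \(-(p-1)\int u w^p v^{-p}\) has a good sign. Each of the other terms is of the form \(\int u\cdot(\text{stuff})\), where the stuff is bounded by \(w^{p-\frac12}|D^2v|v^{1-p}\) or \(w^{p}v^{-p}\), using \(|\Delta v|\le\sqrt n|D^2v|\) and \(|\nabla w|\le 2\sqrt w|D^2v|\); this is the source of the \(\sqrt n\) in the constant. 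We then apply Young's inequality with exponents \(p+1\) and \(\frac{p+1}{p}\), after splitting each factor as a product of powers of \(w^{p+1}v^{-p-1}\) and \(w^{p-1}|D^2v|^2v^{1-p}\). Since \(u^{p+1}\) carries no \(v\)-weight, the powers of \(v\) must balance, and they do: \(w^{p-\frac12}|D^2v|v^{1-p}=(w^{p+1}v^{-p-1})^{\frac{p-1}{2(p+1)}}\cdot(w^{p-1}|D^2v|^2v^{1-p})^{\frac12}\cdot\ldots\). The \(u\)-terms are thereby absorbed into the dissipation at the cost of \(C\int u^{p+1}\), which explains the exponents \(18^{\frac p{p+1}}\) and \(18^{\frac{p-1}{2(p+1)}}\) in \(C\).

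The main obstacle is the bookkeeping in the pure-\(v\) terms. There we must show that the cross term \(\int w^{p-1}\nabla w\cdot\nabla v\,v^{-p}\) together with \(\int w^{p+1}v^{-p-1}\) can be absorbed by \(-2p\int w^{p-1}|D^2v|^2v^{1-p}\) and \(-p(p-1)\int w^{p-2}|\nabla w|^2v^{1-p}\), while leaving a fixed fraction \(\frac{p}{18}\) of dissipation. I would try first to complete the square in \(\nabla w\) against \(w\nabla v/v\), using the large coefficient \(p(p-1)\). Any unfavorable remainder \(\sim p\int w^{p+1}v^{-p-1}\) is then handled through \eqref{Lm.1}. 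This step is what requires \(p>20\).
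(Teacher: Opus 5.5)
Your plan is correct, but it handles the pure-$v$ terms by a different and more efficient route than the paper. The square you propose to complete is exact (with your $w=|\nabla v|^2$):
\[
-p(p-1)\int_\Omega \frac{w^{p-2}|\nabla w|^2}{v^{p-1}}+2p(p-1)\int_\Omega \frac{w^{p-1}\nabla w\cdot\nabla v}{v^{p}}-p(p-1)\int_\Omega \frac{w^{p+1}}{v^{p+1}}=-p(p-1)\int_\Omega \frac{w^{p-2}}{v^{p-1}}\left|\nabla w-\frac{w\nabla v}{v}\right|^2 .
\]
So there is no remainder, and the Laplacian part is bounded by $-2pH$ with $H:=\int_\Omega w^{p-1}|D^2v|^2v^{1-p}$.

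Then \eqref{Lm.1} gives $E:=\int_\Omega w^{p+1}v^{-p-1}\le KH$ with $K=\bigl(\frac{2p+\sqrt n}{p}\bigr)^2<\bigl(3+\frac4p\bigr)^2<11$. For the $u$-terms, merge $-(p-1)\int_\Omega uw^pv^{-p}$ with the $+2p(p-1)\int_\Omega uw^pv^{-p}$ from your integration by parts. Their common weight is $w^{p-1}|D^2v|v^{1-p}$, not $w^{p-\frac12}|D^2v|v^{1-p}$; your own Hölder factorization produces $w^{p-1}$. They are then at most $c'H^{\frac p{p+1}}\bigl(\int_\Omega u^{p+1}\bigr)^{\frac1{p+1}}$ with $c'=(2p-1)(p-1)K^{\frac p{p+1}}+2p(2p-2+\sqrt n)K^{\frac{p-1}{2(p+1)}}$. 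Young leaves $-(2p-1)H\le-\frac{2p-1}{K}E\le-\frac p{18}E$. Since $K<18$, you get $\frac{c'^{p+1}}{p+1}\le C$, which proves the lemma. You should state explicitly that in your argument $18$ enters only as an upper bound for $K$.

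The paper instead trades $|D^2v|^2$ for $v^2|D^2\ln v|^2$ and uses $M=\int_\Omega|\nabla v|^{2p-2}|D^2\ln v|^2v^{3-p}$ as the dissipation. This changes the cross and $E$ coefficients to $2p(p-2)$ and $-p(p-3)$, which no longer form a perfect square against $-p(p-1)$. Young therefore leaves $+\frac p{p-1}E$, which is absorbed via \eqref{Lm.2} ($E\le18M$) and \eqref{Lm.3} ($H\le9M$). That is where the paper's $18$ comes from, along with the factor $3=\sqrt9$ in $C$, the condition $p>\sqrt n-4$ (to get $\frac{p+\sqrt n}{p+2}\le2$), and $p\ge20$ (to get $\frac{p-1}{18}\ge\frac p{p-1}$).

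So your remark that the pure-$v$ step ``is what requires $p>20$'' describes the paper's route, not yours. Completing the square before passing to $D^2\ln v$ needs only \eqref{Lm.1}, uses the size of $p$ only to keep $K$ bounded, and yields a smaller constant. The paper's route stays within the $D^2\ln v$ framework of \eqref{Lm.2}--\eqref{Lm.3}.
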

\begin{proof}
    By direct calculations, we have
    \begin{align} \label{L.1}
        \frac{d}{dt}\int_\Omega \frac{|\nabla v|^{2p}}{v^{p-1}}&= 2p \int_\Omega \frac{|\nabla v|^{2p-2} \nabla v \cdot \nabla v_t}{v^{p-1}} -(p-1)\int_\Omega \frac{|\nabla v|^{2p}}{v^p} v_t \notag \\
        &= I+J.
    \end{align}
    Using the identity that 
    \begin{align*}
        \nabla v \cdot \nabla \Delta v = \frac{1}{2} \Delta |\nabla v|^2 -|D^2 v|^2,
    \end{align*}
 integrating by parts and noting that \(\frac{\partial |\nabla v|^2}{\partial \nu } \leq 0\) on \(\partial \Omega\) due to the convexity of \(\Omega\) (see \cite{Lion}), we obtain
    \begin{align} \label{L.2}
        I&= 2p \int_\Omega \frac{|\nabla v|^{2p-2} \nabla v \cdot \nabla (\Delta v-v+u)}{v^{p-1}} \notag \\
        &= 2p \int_\Omega \frac{|\nabla v|^{2p-2}}{v^{p-1}} \left ( \frac{1}{2} \Delta |\nabla v|^2- |D^2 v|^2 \right )-2p \int_\Omega \frac{|\nabla v|^{2p}}{v^{p-1}} + 2p \int_\Omega \frac{|\nabla v|^{2p-2} \nabla v \cdot \nabla u}{v^{p-1}} \notag \\
        &= -p \int_\Omega \nabla \left ( \frac{|\nabla v|^{2p-2}}{v^{p-1}} \right ) \cdot \nabla |\nabla v|^2 + p \int_{\partial \Omega} \frac{|\nabla v|^{2p-2}}{v^{p-1}} \frac{\partial |\nabla v|^2}{\partial \nu }\, dS   - 2p \int_\Omega  \frac{|\nabla v|^{2p-2}|D^2 v|^2}{v^{p-1}}  \notag \\
        &\quad-2p \int_\Omega \frac{|\nabla v|^{2p}}{v^{p-1}}- 2p \int_\Omega \nabla \left ( \frac{|\nabla v|^{2p-2} \nabla v}{v^{p-1}} \right )u \notag \\
        &\leq -p(p-1)\int_\Omega \frac{|\nabla v|^{2p-4}|\nabla |\nabla v|^2|^2}{v^{p-1}} +p(p-1) \int_\Omega \frac{|\nabla v|^{2p-2}\nabla v \cdot \nabla |\nabla v|^2}{v^p}  \notag \\
        &\quad-2p \int_\Omega \frac{|\nabla v|^{2p-2}|D^2 v|^2}{v^{p-1}}-2p \int_\Omega \frac{|\nabla v|^{2p}}{v^{p-1}}-2p(p-1) \int_\Omega \frac{|\nabla v|^{2p-4} \nabla |\nabla v|^2 \cdot \nabla v}{v^{p-1}}u \notag \\
        &\quad-2p \int_\Omega \frac{|\nabla v|^{2p-2} \Delta v}{v^{p-1}}u +2p(p-1) \int_\Omega \frac{|\nabla v|^{2p}}{v^p}u
    \end{align}
    for all \(t\in (0,T_{\rm max})\). By integration by parts, we infer that
    \begin{align} \label{L.3}
        J&= -(p-1) \int_\Omega \frac{|\nabla v|^{2p}(\Delta v-v+u)}{v^p} \notag\\
        &= (p-1) \int_\Omega \nabla \left ( \frac{|\nabla v|^{2p}}{v^p} \right ) \cdot \nabla v + (p-1)\int_\Omega \frac{|\nabla v|^{2p}}{v^{p-1}} -(p-1)\int_\Omega \frac{|\nabla v|^{2p}}{v^p} u \notag \\
        &= p(p-1)\int_\Omega \frac{|\nabla v|^{2p-2}\nabla v \cdot \nabla |\nabla v|^2}{v^p} -p(p-1)\int_\Omega \frac{|\nabla v|^{2p+2}}{v^{p+1}} \notag \\
        &\quad+ (p-1)\int_\Omega \frac{|\nabla v|^{2p}}{v^{p-1}} -(p-1)\int_\Omega \frac{|\nabla v|^{2p}}{v^p} u
    \end{align}
    for all \(t\in (0,T_{\rm max})\). Collecting \eqref{L.1}, \eqref{L.2} and \eqref{L.3} and invoking the following identity
    \begin{align*}
        v^2 |D^2 \ln v|^2 = |D^2 v|^2 - \frac{\nabla v \cdot \nabla|\nabla v|^2}{v} +\frac{|\nabla v|^4}{v^2},
    \end{align*}
    we write 
    \begin{align}\label{L.4}
        \frac{d}{dt}\int_\Omega \frac{|\nabla v|^{2p}}{v^{p-1}} &\leq  - p(p-1) \int_\Omega \frac{|\nabla v|^{2p-4}|\nabla |\nabla v|^2|}{v^{p-1}}- 2p \int_\Omega \frac{|\nabla v|^{2p-2}|D^2 v|^2}{v^{p-1}} \notag \\
        &\quad+ 2p(p-1)\int_\Omega \frac{|\nabla v|^{2p-2}\nabla v \cdot \nabla |\nabla v|^2}{v^p} - p(p-1)\int_\Omega \frac{|\nabla v|^{2p+2}}{v^{p+1}}- (p+1)\int_\Omega \frac{|\nabla v|^{2p}}{v^{p-1}} \notag \\
        &\quad+(2p-1)(p-1)\int_\Omega \frac{|\nabla v|^{2p}}{v^p}u -2p \int_\Omega \frac{|\nabla v|^{2p-2} \Delta v}{v^{p-1}}u+4p(p-1)\int_\Omega \frac{|\nabla v|^{2p-2}|D^2 v|}{v^{p-1}}u \notag \\
        &\leq - p(p-1) \int_\Omega \frac{|\nabla v|^{2p-4}|\nabla |\nabla v|^2|^2}{v^{p-1}} -2p \int_\Omega \frac{|\nabla v|^{2p-2}|D^2 \log v|^2}{v^{p-3}} -(p+1)\int_\Omega \frac{|\nabla v|^{2p}}{v^{p-1}}\notag \\
        &\quad+2p(p-2)\int_\Omega \frac{|\nabla v|^{2p-2}\nabla v \cdot \nabla |\nabla v|^2}{v^p} -p(p-3) \int_\Omega \frac{|\nabla v|^{2p+2}}{v^{p+1}} \notag \\
        &\quad+(2p-1)(p-1)\int_\Omega \frac{|\nabla v|^{2p}}{v^p}u +2p(2p-2+\sqrt{n}) \int_\Omega \frac{|\nabla v|^{2p-2}|D^2v|}{v^{p-1}}u
    \end{align}
    for all \(t\in (0,T_{\rm max})\). An application of Young's inequality yields
    \begin{align*}
        2p(p-2)\int_\Omega \frac{|\nabla v|^{2p-2}\nabla v \cdot \nabla |\nabla v|^2}{v^p} \leq p(p-1)\int_\Omega \frac{|\nabla v|^{2p-4}|\nabla |\nabla v|^2|^2}{v^{p-1}} + \frac{p(p-2)^2}{p-1} \int_\Omega  \frac{|\nabla v|^{2p+2}}{v^{p+1}}.
    \end{align*}
    This, together with \eqref{L.4}, implies that 
    \begin{align}\label{L.5}
         \frac{d}{dt}\int_\Omega \frac{|\nabla v|^{2p}}{v^{p-1}} &\leq-2p \int_\Omega \frac{|\nabla v|^{2p-2}|D^2 \log v|^2}{v^{p-3}} -(p+1)\int_\Omega \frac{|\nabla v|^{2p}}{v^{p-1}} +\frac{p}{p-1}\int_\Omega \frac{|\nabla v|^{2p+2}}{v^{p+1}} \notag \\
         &\quad +(2p-1)(p-1)\int_\Omega \frac{|\nabla v|^{2p}}{v^p}u +2p(2p-2+\sqrt{n}) \int_\Omega \frac{|\nabla v|^{2p-2}|D^2v|}{v^{p-1}}u
    \end{align}
    for all \(t \in (0,T_{\rm max})\). Abbreviate \[M=\int_\Omega \frac{|\nabla v|^{2p-2}|D^2 \ln v|^2}{v^{p-3}}. \] Using \eqref{Lm.2} and \eqref{Lm.3}, together with the fact that \(p>\sqrt{n}-4\), we obtain 
    \begin{align*}
        \frac{p+\sqrt{n}}{p+2} \leq 2.
    \end{align*}
    Hence, we infer that
    \begin{align}\label{L.6}
        \int_\Omega \frac{|\nabla v|^{2p+2}}{v^{p+1}} &\leq \left ( 4\left ( \frac{p+\sqrt{n}}{p+2} \right )^2+2 \right ) M \notag \\
        &\leq 18 M
    \end{align}
   and
    \begin{align}\label{L.7}
        \int_\Omega \frac{|\nabla v|^{2p-2}}{v^{p-1}}|D^2 v|^2 &\leq \left ( \frac{p+2}{p} \left ( \frac{p+\sqrt{n}}{p+2} \right )^2+1 \right )M \notag \\
        &\leq 9 M.
    \end{align}
    Applying Hölder's inequality, together with \eqref{L.6} and \eqref{L.7}, we deduce that
    \begin{align}\label{L.8}
         \int_\Omega \frac{|\nabla v|^{2p}}{v^p} u &\leq  \left (\int_\Omega \frac{|\nabla v|^{2p+2}}{v^{p+1}}  \right )^{\frac{p}{p+1}} \left ( \int_\Omega u^{p+1} \right )^{\frac{1}{p+1}} \notag \\
        &\leq 18^{\frac{p}{p+1}}  M^\frac{p}{p+1} \left ( \int_\Omega u^{p+1} \right )^{\frac{1}{p+1}}
    \end{align}
    and 
    \begin{align}\label{L.9}
        \int_\Omega \frac{|\nabla v|^{2p-2}|D^2v|}{v^{p-1}}u &\leq \left ( \int_\Omega \frac{|\nabla v|^{2p-2}|D^2v|^2}{v^{p-1}} \right )^\frac{1}{2} \left ( \int_\Omega \frac{|\nabla v|^{2p+2}}{v^{p+1}} \right )^{\frac{p-1}{2(p+1)}}\left ( \int_\Omega u^{p+1} \right )^{\frac{1}{p+1}} \notag \\
        &\leq 3 \cdot18^{\frac{p-1}{2(p+1)}}M^{\frac{p}{p+1}}  \left ( \int_\Omega u^{p+1} \right )^{\frac{1}{p+1}}.
    \end{align}
    Combining \eqref{L.8} and \eqref{L.9} and applying Young's inequality, we obtain 
    \begin{align}\label{L.10}
        (2p-1)(p-1) \int_\Omega \frac{|\nabla v|^{2p}}{v^p} u +2p(2p-2&+\sqrt{n})\int_\Omega \frac{|\nabla v|^{2p-2}|D^2v|}{v^{p-1}}u \leq c_1M^{\frac{p}{p+1}}  \left ( \int_\Omega u^{p+1} \right )^{\frac{1}{p+1}} \notag\\
        &\leq \frac{p}{p+1}M + \frac{c_1^{p+1}}{p+1} \int_\Omega u^{p+1} \notag \\
        &\leq M +\frac{c_1^{p+1}}{p+1}\int_\Omega u^{p+1}
    \end{align}
    with
    \begin{align*}
        c_1= (2p-1)(p-1)18^{\frac{p}{p+1}} +6p(2p+\sqrt{n}-2)18^{\frac{p-1}{2(p+1)}}.
    \end{align*}
   Collecting \eqref{L.5}, \eqref{L.6} and \eqref{L.10} and noting that
    \[
    \frac{p-1}{18} \geq \frac{p}{p-1},
    \]
    when \(p \geq 20\), we arrive at 
    \begin{align*}
        \frac{d}{dt}\int_\Omega \frac{|\nabla v|^{2p}}{v^{p-1}}+ (p+1)\int_\Omega\frac{|\nabla v|^{2p}}{v^{p-1}}   &\leq (-2p+1) M +  \frac{p}{p-1} \int_\Omega \frac{|\nabla v|^{2p+2}}{v^{p+1}}+\frac{c_1^{p+1}}{p+1}\int_\Omega u^{p+1} \notag \\
        &\leq - \left ( \frac{2p-1}{18} -\frac{p}{p-1}\right )\int_\Omega \frac{|\nabla v|^{2p+2}}{v^{p+1}}+\frac{c_1^{p+1}}{p+1}\int_\Omega u^{p+1} \notag \\
        &\leq -\frac{p}{18}\int_\Omega \frac{|\nabla v|^{2p+2}}{v^{p+1}}+\frac{c_1^{p+1}}{p+1}\int_\Omega u^{p+1}\qquad \text{for all }t\in (0,T_{\rm max}),
    \end{align*}
    which finishes the proof.
\end{proof}

\section[]{\(L^p\) boundedness of solutions in dimension two}\label{S4}

In this section, we shall derive several a priori estimates for solutions, including \(L\ln L\) and \(L^p\) boundedness for \(u\) in dimension two. Let us begin with the \(L\ln L\) estimate for \(u\) and the \(W^{1,2}\) estimate for \(v\) in the following lemma.

  \begin{lemma} \label{LlnL}
       Let \(\Omega \subset \mathbb{R}^2\) be a bounded domain with smooth boundary and let \(m>1\) and \(k>0\). There exists \(C>0\) such that 
    \begin{align} \label{LlnL-1}
        \int_\Omega u(\cdot,t) \log u(\cdot,t) \leq C \qquad \text{for all }t\in (0,T_{\rm max}),
    \end{align}
    and 
    \begin{align} \label{LlnL-2}
        \int_\Omega |\nabla v(\cdot,t)|^2  \leq C \qquad \text{for all }t\in (0,T_{\rm max}).
    \end{align}
  \end{lemma}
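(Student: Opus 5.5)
The plan is to derive a closed differential inequality for the combined functional
\[
y(t):=\int_\Omega u\log u+\int_\Omega|\nabla v|^2 ,
\]
using the lower bound $v\ge\eta$ from Lemma~\ref{low} to neutralize the singular sensitivity. The porous-medium diffusion supplies a dissipation which, in two dimensions, controls $\int_\Omega u^2$ with a sub-quadratic power.

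\textbf{Step 1: the entropy part.} Testing the first equation by $\log u+1$ gives
\[
\frac{d}{dt}\int_\Omega u\log u=-\int_\Omega\frac{(u+a)^{m-1}}{u}|\nabla u|^2+\chi\int_\Omega\frac{\nabla u\cdot\nabla v}{v^k}.
\]
Since $v\ge\eta$, Young's inequality bounds the last term by
\[
\frac12\int_\Omega\frac{(u+a)^{m-1}}{u}|\nabla u|^2+\frac{\chi^2}{2\eta^{2k}}\int_\Omega (u+a)^{2-m}|\nabla v|^2 .
\]
Here I used $u(u+a)^{1-m}\le (u+a)^{2-m}$. Next, $\frac{(u+a)^{m-1}}{u}\ge (u+a)^{m-2}$, so the remaining dissipation is at least $\frac{2}{m^2}\int_\Omega|\nabla w|^2$ with $w:=(u+a)^{m/2}$.

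\textbf{Step 2: the $W^{1,2}$ part and the $L^2$ bound for $u$.} Testing the second equation with $-\Delta v$ gives
\[
\frac{d}{dt}\int_\Omega|\nabla v|^2+\int_\Omega|\Delta v|^2+2\int_\Omega|\nabla v|^2\le\int_\Omega u^2 .
\]
To control $\int_\Omega u^2\le\int_\Omega w^{4/m}$, I apply Lemma~\ref{GN} with $n=2$, $p=4/m$, $q=2/m$, $r=2$. Note that $\|w\|_{L^{2/m}}$ is bounded by mass conservation \eqref{L1est-1}. Then $\theta=\tfrac12$, and the power of $\|\nabla w\|_{L^2}$ is $2/m<2$ because $m>1$. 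Hence $\int_\Omega u^2\le\varepsilon\int_\Omega|\nabla w|^2+C_\varepsilon$ for any $\varepsilon>0$.

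\textbf{Step 3: the cross term (main obstacle).} The hard term is $\int_\Omega (u+a)^{2-m}|\nabla v|^2$. Its treatment depends on $m$.

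If $m\ge2$, the weight is bounded by $a^{2-m}$. By Lemma~\ref{C52.Para-Reg} with $p=1$, $\nabla v$ is uniformly bounded in $L^q$ for every $q<2$. Interpolating then gives $\int_\Omega|\nabla v|^2\le\varepsilon\int_\Omega|\Delta v|^2+C$.

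If $1<m<2$, I use Hölder's inequality with exponents $\frac{2}{2-m}$ and $\frac2m$:
\[
\int_\Omega (u+a)^{2-m}|\nabla v|^2\le\|u+a\|_{L^2}^{2-m}\|\nabla v\|_{L^{4/m}}^2 .
\]
I then interpolate $\|\nabla v\|_{L^{4/m}}$ between $\|\Delta v\|_{L^2}$ (elliptic regularity under Neumann conditions) and the uniform bound on $\|\nabla v\|_{L^q}$. This yields the power $2\theta=2-\frac{mq}{2}$ of $\|\Delta v\|_{L^2}$. The total degree in $(\|u\|_{L^2},\|\Delta v\|_{L^2})$ is therefore $4-m-\frac{mq}{2}$, and this is $<2$ exactly when $q>\frac4m-2$. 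Since $\frac4m-2<2$ for $m>1$, such a $q<2$ can be chosen. Young's inequality then gives the bound $\varepsilon(\|u\|_{L^2}^2+\|\Delta v\|_{L^2}^2)+C_\varepsilon$. The term $\|u\|_{L^2}^2$ is absorbed again through Step 2.

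This balancing of exponents is the delicate point. It is precisely where $m>1$ and $n=2$ enter.

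\textbf{Step 4: closing.} Adding the inequalities from Steps 1 and 2 and choosing $\varepsilon$ small, I obtain
\[
y'+c\int_\Omega|\nabla w|^2+\frac12\int_\Omega|\Delta v|^2+\int_\Omega|\nabla v|^2\le C .
\]
Since $\int_\Omega u\log u\le\int_\Omega u^2+C\le\int_\Omega|\nabla w|^2+C'$, this yields $y'+\delta y\le C$ for some $\delta>0$. An ODE comparison then bounds $y$ uniformly in time. Finally, $u\log u\ge-e^{-1}$ pointwise, so both \eqref{LlnL-1} and \eqref{LlnL-2} follow.
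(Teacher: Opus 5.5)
Your argument is correct, but it follows a different route from the paper.

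\textbf{How the paper proceeds.} The paper does not split $\chi\int_\Omega \nabla u\cdot\nabla v/v^k$ against the diffusion. It integrates by parts to obtain $-\chi\int_\Omega u\Delta v/v^k+\chi k\int_\Omega u|\nabla v|^2/v^{1+k}$ and applies Young's inequality to both pieces. The resulting $\int_\Omega|\nabla v|^4/v^2$ is controlled by $\int_\Omega|\Delta v|^2$ plus $\int_\Omega v^2$, using the weighted inequality of Lemma~\ref{LK-1} and the bound on $\int_\Omega v^2$ from Lemma~\ref{v'}. Every unfavorable term thus reduces to one of two quantities:
\begin{itemize}
\item $\int_\Omega u^2$, which is absorbed, since $m>1$, by $\int_\Omega u^{m+1}$; the dissipation $\int_\Omega u^{m-2}|\nabla u|^2$ controls the latter through Lemma~\ref{GN};
\item $\int_\Omega|\Delta v|^2$, which is absorbed by the dissipation in the evolution of $\tfrac12\int_\Omega|\nabla v|^2$.
\end{itemize}

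\textbf{How your route differs.} You keep the derivative on $u$ and feed half of the cross term into the nonlinear diffusion, which leaves the weight $(u+a)^{2-m}$. This bypasses Lemma~\ref{LK-1} and Lemma~\ref{v'} entirely. You rely only on three standard tools: the $L^\infty_tL^q_x$ bound for $\nabla v$, $q<2$, from Lemma~\ref{C52.Para-Reg}; elliptic $W^{2,2}$ regularity; and Gagliardo--Nirenberg. The price is a case distinction at $m=2$ and, for $1<m<2$, the exponent count requiring $q>\frac4m-2$.

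\textbf{What each buys.} The paper's proof is uniform in $m$ and shorter, but it leans on the nontrivial $v$-weighted functional inequality. Yours uses only standard tools and shows more transparently where $m>1$ enters: once in $2/m<2$ for the $L^2$ bound of $u$, and once in the admissibility of $q\in\bigl(\max\{1,\frac4m-2\},2\bigr)$.
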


  \begin{proof}
      Multiplying the first equation of \eqref{1} by \(\log u +1\), integrating by parts and employing Lemma \ref{low}, we obtain 
      \begin{align} \label{LlnL.1}
          \frac{d}{dt} \int_\Omega u \log u &= - \int_\Omega (u+a)^{m-1}\frac{|\nabla u|^2}{u} +\chi \int_\Omega \nabla u \cdot \frac{\nabla v}{v^k} \notag \\
          &= - \int_\Omega (u+a)^{m-1}\frac{|\nabla u|^2}{u}  -\chi \int_\Omega u \frac{\Delta v}{v^k} +\chi k \int_\Omega u \frac{|\nabla v|^2}{v^{1+k}} \notag \\
          &\leq - \int_\Omega u^{m-2}|\nabla u|^2 +\frac{\chi }{\eta^k } \int_\Omega u |\Delta v| + \frac{\chi k}{\eta ^k} \int_\Omega u \frac{|\nabla v|^2}{v} \qquad \text{for all }t\in (0,T_{\rm max}),
      \end{align}
      where \(\eta >0\)  is given in Lemma \ref{low}. In view of Lemma \ref{LK-1} and Lemma \ref{v'}, we can find positive constants \(c_1,c_2\) and \(c_3\) such that 
      \begin{align*} 
          \int_\Omega \frac{|\nabla v|^4}{v^2} &\leq c_1 \int_\Omega |\Delta v|^2+c_2\int_\Omega v^2 \notag \\
          &\leq c_1 \int_\Omega |\Delta v|^2+c_3. 
      \end{align*}
      This, together with Young's inequality, implies that 
      \begin{align} \label{LlnL.2}
          \frac{\chi k}{\eta ^k} \int_\Omega u \frac{|\nabla v|^2}{v} &\leq \frac{1}{4c_1} \int_\Omega \frac{|\nabla v|^4}{v^2} + c_4 \int_\Omega u^2 \notag \\
          &\leq \frac{1}{4} \int_\Omega |\Delta v|^2 +c_4 \int_\Omega u^2+ c_5,
      \end{align}
      where \(c_4= \frac{\chi^2 k^2 c_1}{ \eta^{2k} }\) and \(c_5= \frac{ c_3}{4c_1}\). One more application of Young's inequality yields
      \begin{align}\label{LlnL.3}
        \frac{\chi }{\eta^k } \int_\Omega u |\Delta v| \leq \frac{1}{4} \int_\Omega |\Delta v|^2+ c_6 \int_\Omega u^2,
      \end{align}
      where \(c_6= \frac{\chi^2}{\eta^{2k} }\). Direct calculations show that 
      \begin{align} \label{LlnL.4}
          \frac{1}{2}\frac{d}{dt}\int_\Omega |\nabla v|^2 + \int_\Omega |\nabla v|^2  &=- \int_\Omega |\Delta v|^2 - \int_\Omega u \Delta v \notag \\
          &\leq -\frac{1}{2} \int_\Omega |\Delta v|^2 + \frac{1}{2}\int_\Omega u^2  \qquad \text{for all }t\in (0,T_{\rm max}).
      \end{align}
      Making use of \eqref{L1est-1} and applying the Gagliardo-Nirenberg interpolation inequality established in Lemma \ref{GN} , we obtain 
      \begin{align*} 
          \int_\Omega u^{m+1} &= \left \| u^\frac{m}{2} \right \|_{L^{\frac{2(m+1)}{m}}(\Omega)}^{\frac{2(m+1)}{m}} \notag \\
          &\leq c_7 \left \| \nabla u^\frac{m}{2} \right \|_{L^2(\Omega)}^{\frac{2(m+1)\theta}{m}}  \left \| u^\frac{m}{2} \right \|_{L^{\frac{2}{m}}(\Omega)}^{\frac{2(m+1)(1-\theta)}{m}}+c_7 \left \| u^\frac{m}{2} \right \|_{L^{\frac{2}{m}}(\Omega)}^{\frac{2(m+1)}{m}} \notag \\
          &\leq c_8 \left \| \nabla u^\frac{m}{2} \right \|_{L^2(\Omega)}^{\frac{2(m+1)\theta}{m}}  +c_8,
      \end{align*}
      where \(c_7>0\), \(c_8>0\) and \(\theta=\frac{m}{m+1}\). Since 
      \begin{align*}
          \frac{(m+1)\theta}{m}=1,
      \end{align*}
      the above estimate entails that there exists \(c_9>0\) satisfying
      \begin{align}\label{LlnL.5}
          - \int_\Omega u^{m-2}|\nabla u|^2  \leq - c_9 \int_\Omega u^{m+1} +c_9.
      \end{align}
      An application of Young's inequality yields 
      \begin{align} \label{LlnL.6}
          \left (c_4+c_6+\frac{1}{2} \right )\int_\Omega u^2 + \int_\Omega u \log u  \leq c_9 \int_\Omega u^{m+1}+c_{10},
      \end{align}
      where \(c_{10}>0\). Collecting from \eqref{LlnL.1} to \eqref{LlnL.6}, we arrive at 
      \begin{align*}
          \frac{d}{dt} \left \{ \int_\Omega u \log u+ \frac{1}{2}\int_\Omega |\nabla v|^2 \right \}+  \left \{ \int_\Omega u \log u+ \frac{1}{2}\int_\Omega |\nabla v|^2 \right \} \leq c_{11} \qquad \text{for all }t\in (0,T_{\rm max}),
      \end{align*}
      where \(c_{11}=c_5+c_9+c_{10}\). Finally, applying Gronwall's inequality to this immediately implies \eqref{LlnL-1} and \eqref{LlnL-2}, which completes the proof. 
  \end{proof}

The following lemma provides an essential inequality that is necessary for deriving the \(L^p\) boundedness of \(u\).

\begin{lemma} \label{C5.l}
     Let \(\Omega \subset \mathbb{R}^2\) be a smoothly bounded domain. For any \(p >1\), there exist positive constants \(C_1\), \(C_2\) and \(C_3\) depending on \(p\) such that 
    \begin{align} \label{C5.l-1}
        \frac{d}{dt}\int_\Omega |\nabla v|^{2p}+ \int_\Omega |\nabla v|^{2p}+C_1 \int_\Omega |\nabla v|^{2p+2}  \leq  C_1 \int_\Omega u^{p+1} +C_3 \quad \text{for all }t\in (0,T_{\rm max}).
    \end{align}
\end{lemma}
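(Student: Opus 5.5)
We plan to follow the standard two-dimensional argument for $\frac{d}{dt}\int_\Omega|\nabla v|^{2p}$, using the $W^{1,2}$ bound \eqref{LlnL-2} from Lemma~\ref{LlnL} to control boundary terms and lower-order quantities. Note that no convexity is assumed here, so the boundary integral involving $\frac{\partial |\nabla v|^2}{\partial\nu}$ must be handled without a sign condition.

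\emph{Step 1: the basic identity.} Using $v_t=\Delta v-v+u$ and the pointwise identity $\nabla v\cdot\nabla\Delta v=\frac12\Delta|\nabla v|^2-|D^2v|^2$, we integrate by parts as in the proof of Lemma~\ref{L}, but now with weight $|\nabla v|^{2p-2}$ only. This gives
\begin{align*}
\frac{1}{2p}\frac{d}{dt}\int_\Omega|\nabla v|^{2p}+\frac{p-1}{2}\int_\Omega|\nabla v|^{2p-4}\big|\nabla|\nabla v|^2\big|^2+\int_\Omega|\nabla v|^{2p-2}|D^2v|^2+\int_\Omega|\nabla v|^{2p}
\\ \le \frac12\int_{\partial\Omega}|\nabla v|^{2p-2}\frac{\partial|\nabla v|^2}{\partial\nu}\,dS+\int_\Omega|\nabla v|^{2p-2}\nabla v\cdot\nabla u .
\end{align*}
We integrate the last term by parts to move the derivative off $u$. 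This produces terms of the form $\int_\Omega u|\nabla v|^{2p-2}|\Delta v|$ and $(p-1)\int_\Omega u|\nabla v|^{2p-4}|\nabla v\cdot\nabla|\nabla v|^2|$. By Young's inequality, and since $|\Delta v|^2\le 2|D^2v|^2$ in two dimensions, both are bounded by half of the dissipative terms plus $c\int_\Omega u^2|\nabla v|^{2p-2}$. A further application of Young's inequality yields $\int_\Omega u^2|\nabla v|^{2p-2}\le \varepsilon\int_\Omega|\nabla v|^{2p+2}+c_\varepsilon\int_\Omega u^{p+1}$.

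\emph{Step 2: the boundary term.} Since $\frac{\partial|\nabla v|^2}{\partial\nu}\le C_\Omega|\nabla v|^2$ on $\partial\Omega$, the boundary term is controlled by $c\int_{\partial\Omega}|\nabla v|^{2p}=c\|\,|\nabla v|^p\|_{L^2(\partial\Omega)}^2$. We then apply a trace inequality with a small-gradient remainder, namely $\|w\|^2_{L^2(\partial\Omega)}\le\varepsilon\|\nabla w\|^2_{L^2(\Omega)}+c_\varepsilon\|w\|^2_{L^2(\Omega)}$ with $w=|\nabla v|^p$. Here $|\nabla w|^2$ is comparable to $|\nabla v|^{2p-4}|\nabla|\nabla v|^2|^2$, so this piece is absorbed, at the cost of $c\int_\Omega|\nabla v|^{2p}$.

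\emph{Step 3: producing $\int_\Omega|\nabla v|^{2p+2}$ and closing.} We use the two-dimensional Gagliardo--Nirenberg inequality (Lemma~\ref{GN}) with $w=|\nabla v|^p$ and the bound $\|w\|_{L^{2/p}}^{2/p}=\int_\Omega|\nabla v|^2\le C$ from \eqref{LlnL-2}. This gives $\|w\|_{L^{(2p+2)/p}}^{(2p+2)/p}\le c\|\nabla w\|_{L^2}^{2}\|w\|_{L^{2/p}}^{2/p}+c$. Since the exponent of $\|\nabla w\|_{L^2}$ is exactly $2$, we get $\int_\Omega|\nabla v|^{2p+2}\le c\int_\Omega|\nabla v|^{2p-4}|\nabla|\nabla v|^2|^2+c$. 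Thus the leftover dissipation yields $C_1\int_\Omega|\nabla v|^{2p+2}$ on the left-hand side. With $\varepsilon$ chosen small, this absorbs the $\varepsilon$-terms from Step 1. The lower-order term $c\int_\Omega|\nabla v|^{2p}$ from Step 2 is handled by $c\int_\Omega|\nabla v|^{2p}\le \frac{C_1}{2}\int_\Omega|\nabla v|^{2p+2}+c$, and it also supplies the term $+\int_\Omega|\nabla v|^{2p}$ on the left. Multiplying through by $2p$ gives \eqref{C5.l-1}.

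The main obstacle is Step 2: without convexity, the boundary term must be dominated using exactly the gradient dissipation that Step 3 also needs. We will therefore split that dissipation carefully, with a fixed fraction reserved for the Gagliardo--Nirenberg step. The crucial observation is that the Gagliardo--Nirenberg step is linear in $\|\nabla w\|_{L^2}^2$ precisely because $n=2$ and the $L^1$-type bound on $|\nabla v|^2$ is available.
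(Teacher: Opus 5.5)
Your proposal is correct and follows essentially the same route as the paper. Both arguments use the same energy identity, integrate the cross term by parts and apply Young's inequality with $|\Delta v|^2\le 2|D^2v|^2$, bound the boundary integral via $\frac{\partial|\nabla v|^2}{\partial\nu}\le C|\nabla v|^2$ plus a trace estimate, and close with the two-dimensional Gagliardo--Nirenberg inequality for $|\nabla v|^p$ using \eqref{LlnL-2}. The only cosmetic difference is in the trace step: the paper applies $W^{1,1}(\Omega)\hookrightarrow L^1(\partial\Omega)$ to $|\nabla v|^{2p}$ and then Young's inequality, which is equivalent to your $\varepsilon$-trace inequality for $|\nabla v|^p$ in $L^2$.
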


\begin{proof}
Several applications of integration by parts, together with the identity
\[
\Delta |\nabla v|^2 = 2\nabla v \cdot \nabla \Delta v + 2|D^2 v|^2,
\]
yield
\begin{align} \label{C5.l.1}
    \frac{1}{p} \frac{d}{dt}\int_\Omega |\nabla v|^{2p} &= 2\int_\Omega |\nabla v|^{2p-2} \nabla v \cdot \nabla \left ( \Delta v -  v + u \right ) \notag \\
    &= \int_\Omega |\nabla v|^{2p-2} \left ( \Delta |\nabla v|^2- 2|D^2 v|^2 \right )- 2\int_\Omega |\nabla v|^{2p}+2 \int_\Omega |\nabla v|^{2p-2} \nabla v \cdot \nabla u \notag \\
    &= -\frac{4(p-1)}{p^2} \int_\Omega  |\nabla |\nabla v|^p|^2 - 2 \int_\Omega |\nabla v|^{2p-2}|D^2 v|^2- 2 \int_\Omega |\nabla v|^{2p}\notag \\
    &\quad+2 \int_\Omega |\nabla v|^{2p-2} \nabla v \cdot \nabla u + \int_{\partial \Omega} |\nabla v|^{2p-2}\frac{\partial |\nabla v|^2}{\partial \nu} \qquad \text{for all }t\in (0,T_{\rm max}).
\end{align}
Thanks to Lemma 4.2 in \cite{Souplet-13}, we can find \(c_1>0\) such that 
   {\begin{align*}
       \frac{\partial |\nabla v|^2}{\partial \nu } \leq c_1 |\nabla v|^2 \quad \text{on } \partial \Omega.
   \end{align*}}
 This, together with the trace Sobolev's embedding theorem, \(W^{1,1}(\Omega) \to L^1(\partial \Omega)\), {and} Young's inequality, entails that
   \begin{align}\label{C5.l.2}
       \int_{\partial \Omega}|\nabla v|^{2(p-1)} \frac{\partial |\nabla v|^2}{\partial \nu} &\leq c_1  \int_{\partial \Omega}|\nabla v|^{2p} \notag \\
       &\leq c_2 \int_\Omega |\nabla v|^{2p}+ c_2 \int_\Omega |\nabla v|^{2p-2} |\nabla |\nabla v|^2| \notag \\
       &\leq \frac{p-1}{p^2}\int_\Omega |\nabla |\nabla v|^p|^2+c_3  \int_\Omega |\nabla v|^{2p},
   \end{align}
   where \(c_2>0\) and \(c_3>0\). From Lemma \ref{LlnL}[\eqref{LlnL-2}], we have that \(\int_\Omega |\nabla v|^2\) is bounded in \((0,T_{\rm max})\). This, in conjunction with the two-dimensional Gagliardo–Nirenberg interpolation inequality, implies that
   \begin{align*}
       \int_\Omega |\nabla v|^{2p+2} &= \left \| |\nabla v|^p \right \|_{L^{2+\frac{2}{p}}(\Omega)}^{2+\frac{2}{p}} \notag \\
       &\leq c_4 \left \| \nabla |\nabla v|^p \right \|_{L^2(\Omega)}^2 \left \| |\nabla v|^p \right \|_{L^{\frac{2}{p}}}^{\frac{2}{p}} +c_4\left \| |\nabla v|^p \right \|_{L^{\frac{2}{p}}}^{\frac{2(p+1)}{p}}  \notag \\
       &\leq c_5 \left \| \nabla |\nabla v|^p \right \|_{L^2(\Omega)}^2  +c_5,
   \end{align*}
   where \(c_4>0\) and \(c_5>0\). This entails that there exist \(c_6>0\) and \(c_7>0\) such that 
   \begin{align}\label{C5.l.3'}
       -\frac{2(p-1)}{p^2} \int_\Omega  |\nabla |\nabla v|^p|^2 \leq -c_6 \int_\Omega |\nabla v|^{2p+2} +c_7.
   \end{align}
   An application of Young's inequality yields the existence of \(c_8>0\) satisfying 
   \begin{align}\label{C5.l.3}
       \left (c_3+\frac{1}{p} \right  )  \int_\Omega |\nabla v|^{2p} \leq \frac{c_6}{4} \int_\Omega |\nabla v|^{2p+2} +c_8.
   \end{align}
   Integrating by parts and employing the inequality \( |\Delta v|^2 \leq 2|D^2v|^2\), we derive that
   \begin{align}\label{C5.l.4}
       2 \int_\Omega |\nabla v|^{2p-2} \nabla v \cdot \nabla u&= -2 \int_\Omega u |\nabla v|^{2p-2}\Delta v  - \frac{4(p-1) }{p} \int_\Omega |\nabla v|^{p-2} \nabla v \cdot \nabla |\nabla v|^p u \notag \\
       &\leq  \int_\Omega |\nabla v|^{2p-2}|\Delta v|^2 + \frac{p-1}{p^2}\int_\Omega |\nabla |\nabla v|^p|^2+c_7 \int_\Omega u^2|\nabla v|^{2p-2} \notag \\
       &\leq 2\int_\Omega |\nabla v|^{2p-2} |D^2 v|^2 + \frac{p-1}{p^2}\int_\Omega |\nabla |\nabla v|^p|^2+ \frac{c_6}{4} \int_\Omega |\nabla v|^{2p+2}+c_8 \int_\Omega u^{p+1}, 
   \end{align}
   where \(c_7>0\) and \(c_8>0\). Collecting from \eqref{C5.l.1} to \eqref{C5.l.4}, we arrive at
   \begin{align*}
       \frac{d}{dt} \int_\Omega |\nabla v|^{2p}+ \int_\Omega |\nabla v|^{2p} + \frac{c_6}{2} \int_\Omega |\nabla v|^{2p+2}\leq  c_8p\int_\Omega u^{p+1}+c_9 \quad \text{for all } t\in (0,T_{\rm max}), 
   \end{align*}
   where \(c_9= pc_7+pc_8\), which completes the proof.
\end{proof}

With the aid of the above lemmas, we are now ready to derive the \(L^p\) boundedness of \(u\), as stated in the following lemma.

\begin{lemma} \label{Lp2}
       Let \(\Omega \subset \mathbb{R}^2\) be a bounded domain with smooth boundary and let \(m>1\) and \(k>0\). For any \(p>1\), there exists \(C=C(p)>0\) such that 
    \begin{align} \label{Lp2-1}
        \int_\Omega u^p(\cdot,t) \leq C \qquad \text{for all }t\in (0,T_{\rm max}).
    \end{align}
  \end{lemma}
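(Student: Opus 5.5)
We will test the first equation of \eqref{1} against \(u^{p-1}\) and combine the result with Lemma \ref{C5.l}, which already controls \(\int_\Omega |\nabla v|^{2p+2}\) in terms of \(\int_\Omega u^{p+1}\). It suffices to treat large \(p\); smaller \(p\) then follow from Hölder's inequality and \eqref{L1est-1}.

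\textbf{Step 1: testing the \(u\)-equation.} Using \(v\ge\eta\) from Lemma \ref{low} and \((u+a)^{m-1}\ge u^{m-1}\), we obtain
\[
\frac1p\frac{d}{dt}\int_\Omega u^p+(p-1)\int_\Omega u^{p+m-3}|\nabla u|^2\le \chi(p-1)\int_\Omega u^{p-1}\frac{|\nabla u||\nabla v|}{v^k}.
\]
Young's inequality bounds the right-hand side by
\[
\frac{p-1}{2}\int_\Omega u^{p+m-3}|\nabla u|^2+c\,\eta^{-2k}\int_\Omega u^{p+1-m}|\nabla v|^2 .
\]
Since \(m>1\), a second application of Young's inequality gives, for any \(\varepsilon>0\),
\[
u^{p+1-m}|\nabla v|^2\le \varepsilon u^{p+1}+\varepsilon |\nabla v|^{2p+2}+C(\varepsilon),
\]
because the exponents \(\frac{p+1-m}{p+1}+\frac{2}{2p+2}<1\). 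This is exactly where \(m>1\) enters, and it is why no smallness of \(\chi\) is needed.

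\textbf{Step 2: making the diffusion dominate.} We need the diffusion term to absorb \(\int_\Omega u^{p+1}\). In two dimensions, apply Lemma \ref{GN} to \(w=u^{(p+m-1)/2}\), using only the bound on \(\int_\Omega u\) from \eqref{L1est-1}. This gives
\[
\int_\Omega u^{p+m}\le C\|\nabla w\|_{L^2(\Omega)}^{2}+C.
\]
The exponent of \(\|\nabla w\|_{L^2}\) equals exactly \(2\), by the same computation as in Lemma \ref{LlnL}. Hence
\[
-\int_\Omega u^{p+m-3}|\nabla u|^2\le -c\int_\Omega u^{p+m}+C .
\]
Since \(p+m>p+1\), Young's inequality then yields
\[
K\int_\Omega u^{p+1}+\int_\Omega u^p\le \frac{c}{4}\int_\Omega u^{p+m}+C_K
\]
for any \(K>0\).

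\textbf{Step 3: the combined functional.} Consider \(y(t)=\frac1p\int_\Omega u^p+\delta\int_\Omega|\nabla v|^{2p}\), with \(\delta>0\) small. By Lemma \ref{C5.l}, the \(v\)-part contributes \(-\delta C_1\int_\Omega|\nabla v|^{2p+2}+\delta C_1\int_\Omega u^{p+1}+\delta C_3\). Choose \(\varepsilon\le \delta C_1/2\) in Step 1, so that the \(|\nabla v|^{2p+2}\) terms cancel. The remaining \(\int_\Omega u^{p+1}\) terms have a finite total coefficient, which is absorbed by the diffusion term via Step 2 with \(K\) chosen accordingly.

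This yields \(y'+c\,y\le C\). Gronwall's inequality then gives \eqref{Lp2-1}, noting that \(y(0)\) is finite because \(u_0\in C^0(\bar\Omega)\) and \(v_0\in W^{1,\infty}(\Omega)\).

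\textbf{Main obstacle.} The delicate point is the bookkeeping of constants in Step 3. The \(|\nabla v|^{2p+2}\) coefficient generated by the \(u\)-equation must be made small relative to \(\delta C_1\), which Step 1 allows for any \(\delta\). The \(\int_\Omega u^{p+1}\) coefficient must be beaten by the superlinear diffusion term \(\int_\Omega u^{p+m}\), which holds since \(m>1\). We expect both to go through without any constraint on \(\chi\), \(k\), or \(m>1\). If the Gagliardo--Nirenberg step proves awkward, we would instead use the \(L\log L\) bound of Lemma \ref{LlnL} in the two-dimensional inequality \(\int_\Omega w^{2+2/q}\le \varepsilon\|\nabla w\|_{L^2}^2\cdot(\cdots)+C\).
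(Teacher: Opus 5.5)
Your proposal is correct and is essentially the paper's proof: test with \(u^{p-1}\), couple with Lemma~\ref{C5.l}, absorb via the two-dimensional Gagliardo--Nirenberg bound \(\int_\Omega u^{p+m}\le C\int_\Omega u^{p+m-3}|\nabla u|^2+C\), and close with Gronwall. The only difference is that the paper applies Lemma~\ref{C5.l} with exponent \(p-m+1\) (functional \(\frac1p\int_\Omega u^p+\int_\Omega|\nabla v|^{2p-2m+2}\)), so a plain two-term Young inequality \(c_1u^{p-m+1}|\nabla v|^2\le c_2|\nabla v|^{2p-2m+4}+c_5u^{p-m+2}\) closes with no \(\varepsilon\). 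Your exponent \(p\) needs the \(\varepsilon\)-Young step based on \(\frac{p+2-m}{p+1}<1\), with \(\varepsilon\) chosen so that \(c\,\eta^{-2k}\varepsilon\le \delta C_1/2\), not just \(\varepsilon\le\delta C_1/2\); in exchange it uses Lemma~\ref{C5.l} directly under its hypothesis \(p>1\).
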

  \begin{proof}
     Without loss of generality, we assume that \(p>\max \left\{ m-1,1 \right \}\). Multiplying the first equation in \eqref{1} by \(u^{p-1}\), integrating by parts, and applying Young's inequality yields
\begin{align} \label{Lp2.1}
    \frac{1}{p} \frac{d}{dt}\int_\Omega u^p 
    &= -(p-1)\int_\Omega u^{p-2}(u+a)^{m-1}|\nabla u|^2
       +\chi (p-1) \int_\Omega u^{p-1} \nabla u \cdot \frac{\nabla v}{v^k} \notag \\
    &\leq -\frac{p-1}{2} \int_\Omega u^{p+m-3}|\nabla u|^2
       + \frac{\chi^2(p-1)}{2} \int_\Omega u^{p-m+1} \frac{|\nabla v|^2}{v^{2k}} \notag \\
    &\leq -\frac{p-1}{2} \int_\Omega u^{p+m-3}|\nabla u|^2
       +c_1 \int_\Omega u^{p-m+1} |\nabla v|^2 
       \qquad \text{for all } t\in (0,T_{\max}),
\end{align}
where \(c_1= \frac{\chi^2(p-1)}{2 \eta^{2k}}\), with \(\eta>0\) as provided in Lemma \ref{low}. In view of Lemma \ref{C5.l}, we can find positive constants \(c_2,c_3\) and \(c_4\) such that 
\begin{align} \label{Lp2.2}
    \frac{d}{dt}\int_\Omega |\nabla v|^{2p-2m+2} + \int_\Omega |\nabla v|^{2p-2m+2}+c_2 \int_\Omega |\nabla v|^{2p-2m+4} \leq c_3 \int_\Omega u^{p-m+2} +c_4 
\end{align}
for all $t\in (0,T_{\rm max})$. In light of Young's inequality, we can find \(c_5>0\) satisfying 
\begin{align} \label{Lp2.3}
    c_1 \int_\Omega u^{p-m+1} |\nabla v|^2  \leq c_2  \int_\Omega |\nabla v|^{2p-2m+4}  +c_5 \int_\Omega u^{p-m+2}.
\end{align}
Making use of the uniform boundedness of \(\int_\Omega u\) as established in \eqref{L1est-1} and applying Lemma \ref{GN}, we infer that 
\begin{align*}
    \int_\Omega u^{p+m} &= \left \| u^{\frac{p+m-1}{2}} \right \|_{L^{\frac{2(p+m)}{p+m-1}}(\Omega)}^{\frac{2(p+m)}{p+m-1}} \\
    & \leq c_6 \left \| \nabla u^{\frac{p+m-1}{2}} \right \|_{L^2(\Omega)}^2 \left \| u^{\frac{p+m-1}{2}} \right \|_{L^{\frac{2}{p+m-1}}(\Omega)}^{\frac{2}{p+m-1}} + c_6 \left \| u^{\frac{p+m-1}{2}} \right \|_{L^{\frac{2}{p+m-1}}(\Omega)}^{\frac{2(p+m)}{p+m-1}} \notag \\
    &\leq c_7 \int_\Omega u^{p+m-3}|\nabla u|^2 +c_8,
\end{align*}
where \(c_6>0\), \(c_7>0\) and \(c_8>0\). This implies that there exists \(c_9>0\) such that 
\begin{align} \label{Lp2.4}
    -\frac{p-1}{2} \int_\Omega u^{p+m-3}|\nabla u|^2 \leq -c_9 \int_\Omega u^{p+m}+c_9.
\end{align}
An application of Young's inequality yields the existence of \(c_{10}>0\) fulfilling
\begin{align} \label{Lp2.5}
    \frac{1}{p}\int_\Omega u^p+ (c_3+c_5) \int_\Omega u^{p-m+2} \leq c_9 \int_\Omega u^{p+m}+c_{10}.
\end{align}
Collecting from \eqref{Lp2.1} to \eqref{Lp2.5}, we arrive at 
\begin{align*}
    \frac{d}{dt} \left \{ \frac{1}{p}\int_\Omega u^p + \int_\Omega |\nabla v|^{2p-2m+2} \right \}+\frac{1}{p}\int_\Omega u^p + \int_\Omega |\nabla v|^{2p-2m+2} \leq c_{11} \qquad \text{for all } t\in (0,T_{\max}),
\end{align*}
where \(c_{11}= c_4+c_9+c_{10}\). This, together with Gronwall's inequality, implies that 
\begin{align*}
     \frac{1}{p}\int_\Omega u^p(\cdot,t) + \int_\Omega |\nabla v|^{2p-2m+2}(\cdot,t) \leq \max \left \{ c_{11},  \frac{1}{p}\int_\Omega u_0^p + \int_\Omega |\nabla v_0|^{2p-2m+2}\right \}\qquad \text{for all } t\in (0,T_{\max}),
\end{align*}
which completes the proof.
  \end{proof}

\section[]{\(L^p\) boundedness of solutions in higher dimensions} \label{S5}
This section establishes the \(L^p\) boundedness of \(u\) for any \(p>1\) when \(n\ge 3\), and is divided into three subsections. Subsections~\ref{5.1}, \ref{5.2}, and \ref{5.3} treat, respectively, the cases
\[
k<\frac12,\quad m>1+\frac{(1-k)(n-2)}{n};\qquad
\frac12\le k<1,\quad m>\frac{3n-2}{2n};\qquad
k\ge 1,\quad m>\max\left\{1,\frac{3n-2}{2n+2}\right\}.
\]

\subsection[Case k in (0,1/2), m > 1 + (1-k)(n-2)/n]
           {The case \(k \in \left(0,\frac{1}{2}\right)\) and 
            \(m> 1+ \frac{(1-k)(n-2)}{n}\)} \label{5.1}

Before deriving the \(L^p\) estimate for \(u\), we need the following auxiliary lemma.
            
\begin{lemma} \label{V}
 Let \(\Omega \subset \mathbb{R}^n\) with \(n \geq 3\) be a smoothly bounded domain  and let \(\gamma \in [1, \frac{n}{n-2})\). For any \(p> \gamma\), there exist \(C_1>0\) and \(C_2>0\) such that 
    \begin{align}  \label{v-1}
        \frac{d}{dt}\int_\Omega v^{p-\gamma+1}+\int_\Omega v^{p-\gamma+1}+  C_1 \int_\Omega v^p \leq C_2 \int_\Omega u^{\frac{p}{\gamma}}+C_2 \qquad \text{for all }t\in (0,T_{\rm max}).
    \end{align}
\end{lemma}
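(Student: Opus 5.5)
The plan is to test the second equation of \eqref{1} against \(v^{q-1}\) with \(q:=p-\gamma+1\). Since \(p>\gamma\), we have \(q>1\). The gradient dissipation will be converted into \(\int_\Omega v^p\) by Lemma \ref{GN}, using the uniform bound of Lemma \ref{v'} as the lower-order norm. The source term \(u\) will then be split off by Young's inequality.

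\textbf{Step 1 (testing).} Integrating by parts with \eqref{bdry} gives, for all \(t\in(0,T_{\rm max})\),
\begin{align*}
\frac{d}{dt}\int_\Omega v^{q} = -\frac{4(q-1)}{q}\int_\Omega |\nabla v^{q/2}|^2 - q\int_\Omega v^{q} + q\int_\Omega v^{q-1}u .
\end{align*}
Since \(q\ge 1\), the term \(-q\int_\Omega v^q\) already dominates the \(+\int_\Omega v^q\) required on the left of \eqref{v-1}.

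\textbf{Step 2 (Gagliardo--Nirenberg, the main point).} Set \(w=v^{q/2}\), so that \(\int_\Omega v^p=\|w\|_{L^{2p/q}}^{2p/q}\). Fix
\[
s:=\max\left\{1,\tfrac{(\gamma-1)n}{2}\right\}.
\]
Because \(\gamma<\frac{n}{n-2}\), we have \(\frac{(\gamma-1)n}{2}<\frac{n}{n-2}\), hence \(s\in[1,\frac{n}{n-2})\). Moreover \(\frac{(\gamma-1)n}{2}<\gamma<p\) (the first inequality is equivalent to \(\gamma<\frac{n}{n-2}\)), so \(s\le p\). Lemma \ref{v'} then gives \(\|w\|_{L^{2s/q}}^{2s/q}=\int_\Omega v^s\le C\).

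Apply Lemma \ref{GN} with \(r=2\), exponent \(P=2p/q\) and \(Q=2s/q\le P\). The admissibility condition \(\frac12\le\frac1n+\frac{q}{2p}\) holds because
\[
\frac{q}{p}=1-\frac{\gamma-1}{p}>\frac1\gamma>\frac{n-2}{n}.
\]
A direct computation shows \(P\theta\le 2\) is equivalent to \(\frac{p-q}{2s}\le\frac1n\), i.e. \(\gamma\le 1+\frac{2s}{n}\), which holds by the choice of \(s\). Using Young's inequality when \(P\theta<2\), this yields
\[
\int_\Omega v^p\le c\int_\Omega|\nabla v^{q/2}|^2+c .
\]
Checking exactly this exponent bookkeeping, and that the restriction \(\gamma<\frac{n}{n-2}\) is precisely what makes \(P\theta\le 2\) attainable, is where I expect the only real difficulty.

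\textbf{Step 3 (source term).} Young's inequality with exponents \(\frac{p}{q-1}\) and \(\frac{p}{p-q+1}=\frac{p}{\gamma}\) gives
\[
q\int_\Omega v^{q-1}u\le \frac{C_1}{2}\int_\Omega v^p+C\int_\Omega u^{p/\gamma}.
\]
Choosing \(C_1\) as half the constant produced in Step 2 and collecting Steps 1--3 yields \eqref{v-1}.
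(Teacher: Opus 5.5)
Your proposal is correct and follows the paper's proof. You test the $v$-equation with $v^{p-\gamma}$, convert the gradient term into $\int_\Omega v^p$ via Lemma~\ref{GN} with a lower-order norm controlled by Lemma~\ref{v'} (the paper takes exactly $\int_\Omega v^{n(\gamma-1)/2}$, so that $P\theta=2$), and absorb $\int_\Omega v^{p-\gamma}u$ by Young's inequality. Your choice $s=\max\{1,\frac{(\gamma-1)n}{2}\}$ is a small improvement: it avoids the degenerate lower-order exponent $\frac{n(\gamma-1)}{2}=0$ at $\gamma=1$, which Lemma~\ref{GN} does not admit, and it keeps Lemma~\ref{v'} applicable with an exponent at least $1$.
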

\begin{proof}
    The condition \(1\leq \gamma < \frac{n}{n-2} \) implies that \(0\leq \frac{n(\gamma-1)}{2}< \frac{n}{n-2}\). In view of Lemma \ref{v'}, we can find \(c_1>0\) such that 
    \begin{align} \label{v.1}
        \int_\Omega v^\frac{n(\gamma-1)}{2}(\cdot,t) \leq c_1 \qquad \text{for all }t\in (0,T_{\rm max}).
    \end{align}
   By employing Lemma \ref{GN}, we obtain
    \begin{align} \label{v.2}
        \int_\Omega v^p &= \left \|  v^{\frac{p-\gamma+1}{2}} \right \|_{L^{\frac{2p}{p-\gamma+1}}(\Omega)}^{\frac{2p}{p-\gamma+1}} \notag \\
        &\leq c_2 \left \| \nabla v^{\frac{p-\gamma+1}{2}}  \right \|_{L^2(\Omega)}^{\frac{2p}{p-\gamma+1} \theta} \left \| v^{\frac{p-\gamma+1}{2}}  \right \|_{L^{\frac{n(\gamma-1)}{p-\gamma+1}}(\Omega)}^{\frac{2p}{p-\gamma+1}(1-\theta)} +  c_2 \left \| v^{\frac{p-\gamma+1}{2}}  \right \|_{L^{\frac{2}{p-\gamma+1}}(\Omega)}^{\frac{2p}{p-\gamma+1}},
    \end{align}
    where \(c_2>0\) and 
    \begin{align*}
        \theta = \frac{\frac{p-\gamma+1}{n(\gamma-1)} -\frac{p-\gamma+1}{2p}}{\frac{p-\gamma+1}{n(\gamma-1)} +\frac{1}{n}- \frac{1}{2}} \in (0,1).
    \end{align*}
    Substituting the fact that \(\frac{p \theta}{p-\gamma+1}=1\) into the estimate \eqref{v.2} and combining with \eqref{v.1} and \eqref{L1est-2}, we deduce that  
    \begin{align} \label{v.3}
        -(p-\gamma+1)(p-\gamma)\int_\Omega v^{p-\gamma-1}|\nabla v|^2 \leq -c_3 \int_\Omega v^p +c_4,
    \end{align}
    where \(c_3>0\) and \(c_4>0\). Multiplying the second equation of \eqref{1} by \((p-\gamma+1)v^{p-\gamma}\), integrating by parts, using \eqref{v.3} and applying Young's inequality yields
    \begin{align*}
        \frac{d}{dt} \int_\Omega v^{p-\gamma+1} + (p-\gamma+1) \int_\Omega v^{p-\gamma+1} &= -(p-\gamma+1)(p-\gamma)\int_\Omega v^{p-\gamma-1}|\nabla v|^2 + (p-\gamma+1)\int_\Omega v^{p-\gamma}u \notag \\
        &\leq  -c_3 \int_\Omega v^p+(p-\gamma+1) \int_\Omega v^{p-\gamma}u+c_4 \notag \\
        &\leq -\frac{c_3}{2} \int_\Omega v^p + c_5 \int_\Omega u^\frac{p}{\gamma}+c_4 \qquad \text{for all } t\in (0,T_{\rm max}),
    \end{align*}
    where \(c_5>0\), which implies \eqref{v-1} since \(p-\gamma+1>1\). The proof is now complete.
\end{proof}

Thanks to the previous lemma and Lemma \ref{L}, we can now derive the \(L^p\) boundedness of solutions in the following lemma.
    
  \begin{lemma} \label{Lp'}
 Let \(\Omega \subset \mathbb{R}^n\) with \(n \geq 3\) be a smoothly convex bounded domain  and let \(k \in \left [0,  \frac{1}{2} \right )\) and \(m>1+ \frac{(1-k)(n-2)}{n}\). Then for any \(p>1\), there exists \(C>0\) such that 
    \begin{align*}
        \int_\Omega u^p(\cdot,t) \leq C \qquad \text{for all }t\in (0,T_{\rm max}).
    \end{align*}
\end{lemma}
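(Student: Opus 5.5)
The plan is to run a combined energy argument for the functional
\[
G(t):=\frac1p\int_\Omega u^p+\int_\Omega\frac{|\nabla v|^{2P}}{v^{P-1}}+\int_\Omega v^{\lambda(p-m+1)+2-2k},
\qquad P:=p-m+1+\frac{1-2k}{\lambda},
\]
with $\lambda\in[1,\frac{n}{n-2})$ to be fixed close to $\frac{n}{n-2}$. Since $\int_\Omega u^p$ for large $p$ controls all smaller exponents, I may assume $p$ is so large that $P>\max\{20,\sqrt n-4\}$, which makes Lemma~\ref{L} applicable with exponent $P$.

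\emph{Step 1 (the $u$-equation).} Testing the first equation by $u^{p-1}$ and using Young's inequality as in \eqref{Lp2.1} gives
\[
\frac1p\frac{d}{dt}\int_\Omega u^p\le-\frac{p-1}{2}\int_\Omega u^{p+m-3}|\nabla u|^2+c\int_\Omega u^{p-m+1}\frac{|\nabla v|^2}{v^{2k}}.
\]
Since $k<\frac12$, I will not bound $v^{-2k}$ by $\eta^{-2k}$. Instead I write
\[
\frac{|\nabla v|^2}{v^{2k}}=\frac{|\nabla v|^2}{v}\cdot v^{1-2k}
\]
and apply a three-factor Young inequality to $u^{p-m+1}\cdot\frac{|\nabla v|^2}{v}\cdot v^{1-2k}$. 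Set
\[
s:=p-m+2+\frac{1-2k}{\lambda}=P+1,
\]
and use the exponents $r_1=\frac{s}{p-m+1}$, $r_2=s$, $r_3=\frac{\lambda s}{1-2k}$, so that $\frac1{r_1}+\frac1{r_2}+\frac1{r_3}=1$. This bounds the cross term by
\[
\varepsilon\int_\Omega\frac{|\nabla v|^{2P+2}}{v^{P+1}}+\varepsilon\int_\Omega v^{\lambda s}+C_\varepsilon\int_\Omega u^{s}.
\]

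\emph{Step 2 (absorbing the $v$-terms).} The first term is absorbed by the dissipation $\frac{P}{18}\int_\Omega\frac{|\nabla v|^{2P+2}}{v^{P+1}}$ from Lemma~\ref{L}, which costs $C\int_\Omega u^{P+1}=C\int_\Omega u^s$. The second term is absorbed by Lemma~\ref{V} with $\gamma=\lambda$ and exponent $\lambda s$. Lemma~\ref{V} controls $\frac{d}{dt}\int_\Omega v^{\lambda s-\lambda+1}$; this exponent matches the third summand of $G$ up to the precise choice of constants. Its cost is $C\int_\Omega u^{\lambda s/\lambda}=C\int_\Omega u^{s}$. Both $v$-functionals also produce their own linear damping terms.

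\emph{Step 3 (absorbing $\int_\Omega u^s$).} All remaining bad terms are now multiples of $\int_\Omega u^s$. By Lemma~\ref{GN} applied to $u^{\frac{p+m-1}{2}}$, together with the mass bound \eqref{L1est-1}, the diffusion term controls $\int_\Omega u^{p+m-1+\frac2n}$ up to a constant. So $\int_\Omega u^s$, and likewise $\frac1p\int_\Omega u^p$, can be absorbed by Young's inequality provided
\[
s<p+m-1+\frac2n .
\]
This is equivalent to
\[
2m>3-\frac2n+\frac{1-2k}{\lambda}.
\]
Letting $\lambda\uparrow\frac{n}{n-2}$, the right-hand side tends to $2+\frac{2(1-k)(n-2)}{n}$. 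Hence the hypothesis $m>1+\frac{(1-k)(n-2)}{n}$ allows a choice of $\lambda<\frac{n}{n-2}$ for which the strict inequality holds. We then obtain $G'+cG\le C$, and Gronwall's inequality yields the claim.

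The main obstacle is this exponent bookkeeping. The same $s$ must simultaneously balance Young's inequality, be $\ge 1$ as an exponent, satisfy the condition $p>\gamma$ in Lemma~\ref{V}, and stay below the Gagliardo--Nirenberg threshold. Only the restriction $\lambda<\frac{n}{n-2}$ coming from Lemma~\ref{v'} prevents reaching the endpoint, which explains the strict inequality in the assumption on $m$.
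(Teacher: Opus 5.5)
Your proposal is correct and follows the paper's proof essentially step for step: the same functional with $\lambda<\frac{n}{n-2}$ chosen close to the endpoint, Lemma~\ref{L} at exponent $P$, Lemma~\ref{V} with $\gamma=\lambda$ at exponent $\lambda s$, and Gagliardo--Nirenberg absorption under the condition $s<p+m-1+\frac2n$. Your single three-factor Young inequality is a condensed form of the paper's two successive Young steps \eqref{Lp'.5}--\eqref{Lp'.6}, which produce exactly the same three terms.
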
  

\begin{proof}
Without loss of generality, we assume that \(p> m-1+\max \left\{ 20, \sqrt{ n} -4 \right \}\). Multiplying the first equation of \eqref{1} by \(u^{p-1}\), integrating by parts and applying Young's inequality, we obtain 
    \begin{align}\label{Lp'.1}
        \frac{1}{p}\frac{d}{dt} \int_\Omega u^p &= - (p-1)\int_\Omega u^{p-2}(u+a)^{m-1}|\nabla u|^2 +\chi(p-1)\int_\Omega u^{p-1}\nabla u \cdot \frac{\nabla v}{v^k} \notag \\
        &\leq -\frac{p-1}{2} \int_\Omega u^{p+m-3}|\nabla u|^2 + \frac{\chi^2(p-1)}{2} \int_\Omega u^{p-m+1}\frac{|\nabla v|^2}{v^{2k}} \notag \\
        &= -\frac{2(p-1)}{(p+m-1)^2}\int_\Omega |\nabla u^{\frac{p+m-1}{2}}|^2
+\frac{\chi^2(p-1)}{2}\int_\Omega u^{p-m+1}\frac{|\nabla v|^2}{v^{2k}}
     \end{align}
 for all \(t\in (0,T_{\rm max})\).
Since
\[
m-1>\frac{(1-k)(n-2)}{n},
\]
we may choose \(\lambda\in\left[1,\frac{n}{n-2}\right)\) such that
\begin{align*}
\left(2(m-1)-\frac{n-2}{n}\right)\frac{1}{1-2k}
>\frac{1}{\lambda}
>\frac{n-2}{n}.
\end{align*}
In view of Lemma \ref{V}, we can find \(c_1>0\), \(c_2>0\), and \(c_3>0\) such that
\begin{align}\label{Lp'.3}
\frac{d}{dt} \int_\Omega v^{\lambda(p-m+1)+2-2k}
+\int_\Omega v^{\lambda(p-m+1)+2-2k}
+c_1 \int_\Omega v^{\lambda(p-m+2)+1-2k}
&\leq c_2 \int_\Omega u^{p-m+2+\frac{1-2k}{\lambda}}+c_3,
\end{align}
for all \(t\in(0,T_{\max})\). Since \(p-m+1+\frac{1-2k}{\lambda}> \max \left \{20, \sqrt{n}-4 \right \}\), we apply  Lemma \ref{L} to deduce that there exists \(c_4>0\) such that
\begin{align}\label{Lp'.4}
\frac{d}{dt} \int_\Omega \frac{|\nabla v|^{2(p-m+1+\frac{1-2k}{\lambda})}}{v^{p-m+\frac{1-2k}{\lambda}}}
&+\int_\Omega \frac{|\nabla v|^{2(p-m+1+\frac{1-2k}{\lambda})}}{v^{p-m+\frac{1-2k}{\lambda}}}
\notag \\
&+\frac{1}{18}\left(p-m+1+\frac{1-2k}{\lambda}\right)
\int_\Omega \frac{|\nabla v|^{2(p-m+2+\frac{1-2k}{\lambda})}}{v^{p-m+2+\frac{1-2k}{\lambda}}}
\notag \\
&\leq c_4 \int_\Omega u^{p-m+2+\frac{1-2k}{\lambda}}+c_4
\end{align}
for all \(t\in(0,T_{\max})\). Set
\[
q=p-m+2+\frac{1-2k}{\lambda}.
\]
An application of Young's inequality implies that there exists \(c_5>0\) such that
\begin{align}\label{Lp'.5}
\frac{\chi^2(p-1)}{2}\int_\Omega u^{p-m+1}\frac{|\nabla v|^2}{v^{2k}}
&=\frac{\chi^2(p-1)}{2}\int_\Omega u^{p-m+1}v^{1-2k}\frac{|\nabla v|^2}{v}
\notag \\
&\leq \frac{1}{18}\left(p-m+1+\frac{1-2k}{\lambda}\right)
\int_\Omega \frac{|\nabla v|^{2q}}{v^{q}}
\notag \\
&\quad +c_5\int_\Omega \left(u^{p-m+1}v^{1-2k}\right)^{\frac{q}{q-1}}.
\end{align}
Setting
\[
r=\frac{p-m+1+\frac{1-2k}{\lambda}}{p-m+1},
\]
we find that
\[
\frac{(p-m+1)qr}{q-1}
=\frac{r}{r-1}\cdot\frac{q(1-2k)}{(q-1)\lambda}
=p-m+2+\frac{1-2k}{\lambda}.
\]
Invoking Young's inequality again, we can find \(c_6>0\) satisfying
\begin{align}\label{Lp'.6}
c_5\int_\Omega \left(u^{p-m+1}v^{1-2k}\right)^{\frac{q}{q-1}}
&\leq c_1\int_\Omega v^{\frac{r}{r-1}\cdot\frac{q(1-2k)}{q-1}}
+c_6\int_\Omega u^{\frac{(p-m+1)qr}{q-1}}
\notag \\
&=c_1\int_\Omega v^{\lambda(p-m+2)+1-2k}
+c_6\int_\Omega u^{p-m+2+\frac{1-2k}{\lambda}}.
\end{align}
By the Gagliardo--Nirenberg interpolation inequality, we can find \(c_7>0\) such that
\begin{align}\label{Lp'.7}
\int_\Omega u^{p+m-1+\frac{2}{n}}
&=\left\|u^{\frac{p+m-1}{2}}\right\|_{L^{\frac{2(p+m-1+\frac{2}{n})}{p+m-1}}(\Omega)}
^{\frac{2(p+m-1+\frac{2}{n})}{p+m-1}}
\notag \\
&\leq c_7\left(
\left\|\nabla u^{\frac{p+m-1}{2}}\right\|_{L^2(\Omega)}^\theta
\left\|u^{\frac{p+m-1}{2}}\right\|_{L^{\frac{2}{p+m-1}}(\Omega)}^{1-\theta}
\right.
\notag \\
&\qquad\qquad
\left.
+\left\|u^{\frac{p+m-1}{2}}\right\|_{L^{\frac{2}{p+m-1}}(\Omega)}
\right)^{\frac{2(p+m-1+\frac{2}{n})}{p+m-1}},
\end{align}
where
\[
\theta=\frac{\frac{p+m-1}{2}-\frac{p+m-1}{2(p+m-1+\frac{2}{n})}}
{\frac{p+m-1}{2}+\frac{1}{n}-\frac{1}{2}}.
\]
Because \(u\in L^\infty((0,T_{\max});L^1(\Omega))\) due to Lemma \ref{L1est}[\eqref{L1est-1}] and
\[
\frac{2(p+m-1+\frac{2}{n})\theta}{p+m-1}=2,
\]
from \eqref{Lp'.7}, we can find \(c_8>0\) such that
\[
\int_\Omega u^{p+m-1+\frac{2}{n}}
\leq c_8\int_\Omega |\nabla u^{\frac{p+m-1}{2}}|^2+c_8.
\]
This further entails that
\begin{align}\label{Lp'.8}
c_9\int_\Omega u^{p+m-1+\frac{2}{n}}
\leq \frac{2(p-1)}{(p+m-1)^2}
\int_\Omega |\nabla u^{\frac{p+m-1}{2}}|^2+c_{10},
\end{align}
for some \(c_9>0\) and \(c_{10}>0\). Noting that
\[
p-m+2+\frac{1-2k}{\lambda}<p+m-1+\frac{2}{n}
\]
due to \(m-1>\frac{(n-2)(1-k)}{n}\), we apply Young's inequality and use \eqref{Lp'.8} to obtain
\begin{align}\label{Lp'.9}
(c_2+c_4+c_6)\int_\Omega u^{p-m+2+\frac{1-2k}{\lambda}}
+\frac{1}{p}\int_\Omega u^p
&\leq c_9\int_\Omega u^{p+m-1+\frac{2}{n}}+c_{11}
\notag \\
&\leq \frac{2(p-1)}{(p+m-1)^2}
\int_\Omega |\nabla u^{\frac{p+m-1}{2}}|^2+c_{12},
\end{align}
where \(c_{11}>0\) and \(c_{12}=c_{10}+c_{11}\). We define
\[
y(t):=\frac{1}{p}\int_\Omega u^p
+\int_\Omega \frac{|\nabla v|^{2(p-m+1+\frac{1-2k}{\lambda})}}
{v^{p-m+1+\frac{1-2k}{\lambda}}}
+\int_\Omega v^{\lambda(p-m+1)+2-2k}
\]
for all \(t\in(0,T_{\max})\). Collecting the estimates from \eqref{Lp'.1}--\eqref{Lp'.5} and \eqref{Lp'.9}, we arrive at
\[
y'(t)+y(t)\leq c_{13}
\qquad\text{for all }t\in(0,T_{\max}),
\]
where \(c_{13}=c_3+c_4+c_{12}\). Finally, applying Gronwall's inequality to the above estimate yields
\[
y(t)\leq \max\{y(0),c_{13}\}
\qquad\text{for all }t\in(0,T_{\max}),
\]
which completes the proof.
\end{proof}

\subsection[Case k in [1/2,1), m > (3n-2)/(2n)]
           {The case \(k \in \left[\frac{1}{2},1\right)\) and 
            \(m> \frac{3n-2}{2n}\)} \label{5.2}
By using similar arguments as in the proof of Lemma \ref{Lp2}, we can also derive the \(L^p\) boundedness of \(u\) when \(k \in \left [\frac{1}{2}, 1 \right )\) and \(m> \frac{3n-2}{2n} \). 
\begin{lemma} \label{lp}
    Let \(\Omega \subset \mathbb{R}^n\) with \(n \geq 3\) be a smoothly convex bounded domain  and let \(k \geq \frac{1}{2}\) and \(m> \frac{3n-2}{2n}\). Then for any \(p>1\), there exists \(C>0\) such that
    \[
        \int_\Omega u^p(\cdot,t) \leq C \qquad \text{for all } t\in (0,T_{\max}).
    \]
\end{lemma}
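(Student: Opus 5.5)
The plan is to follow the scheme of Lemma~\ref{Lp2}, replacing the two-dimensional estimate of Lemma~\ref{C5.l} by the weighted inequality of Lemma~\ref{L}, which is available because $\Omega$ is convex. The point is that for $k\ge\frac12$ the singular weight can be reduced to the exponent $1$ through the lower bound of Lemma~\ref{low}. Indeed, $v\ge\eta$ gives
\[
\frac{|\nabla v|^2}{v^{2k}}=v^{1-2k}\,\frac{|\nabla v|^2}{v}\le \eta^{1-2k}\,\frac{|\nabla v|^2}{v}.
\]
Unlike the case $k<\frac12$ of Lemma~\ref{Lp'}, no auxiliary functional $\int_\Omega v^{(\cdot)}$ from Lemma~\ref{V} will therefore be needed.

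First, without loss of generality take $p$ so large that $q:=p-m+1>\max\{20,\sqrt n-4\}$. Testing the first equation of \eqref{1} by $u^{p-1}$ and using Young's inequality exactly as in \eqref{Lp'.1} gives
\[
\frac1p\frac{d}{dt}\int_\Omega u^p\le -\frac{2(p-1)}{(p+m-1)^2}\int_\Omega|\nabla u^{\frac{p+m-1}{2}}|^2+c_1\int_\Omega u^{q}\frac{|\nabla v|^2}{v},
\]
with $c_1=\frac{\chi^2(p-1)}{2}\eta^{1-2k}$. Next, Lemma~\ref{L} applied with exponent $q$ yields
\[
\frac{d}{dt}\int_\Omega\frac{|\nabla v|^{2q}}{v^{q-1}}+\int_\Omega\frac{|\nabla v|^{2q}}{v^{q-1}}+\frac{q}{18}\int_\Omega\frac{|\nabla v|^{2q+2}}{v^{q+1}}\le c_2\int_\Omega u^{q+1}.
\]
Young's inequality with the conjugate exponents $\frac{q+1}{q}$ on $u^q$ and $q+1$ on $\frac{|\nabla v|^2}{v}$ then gives
\[
c_1\int_\Omega u^q\frac{|\nabla v|^2}{v}\le \frac{q}{18}\int_\Omega\frac{|\nabla v|^{2q+2}}{v^{q+1}}+c_3\int_\Omega u^{q+1}.
\]
This absorbs the cross term into the dissipation produced by Lemma~\ref{L}.

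It remains to control $\int_\Omega u^{q+1}=\int_\Omega u^{p-m+2}$, together with $\frac1p\int_\Omega u^p$, by the diffusion term. As in \eqref{Lp'.7}--\eqref{Lp'.8}, the Gagliardo--Nirenberg inequality (Lemma~\ref{GN}) combined with the mass bound \eqref{L1est-1} gives
\[
\int_\Omega u^{p+m-1+\frac2n}\le c\int_\Omega|\nabla u^{\frac{p+m-1}{2}}|^2+c .
\]
The absorption step is exactly where the hypothesis on $m$ enters. Young's inequality lets us dominate $(c_2+c_3)\int_\Omega u^{p-m+2}+\frac1p\int_\Omega u^p$ by a small multiple of $\int_\Omega u^{p+m-1+\frac2n}$ plus a constant, provided
\[
p-m+2<p+m-1+\tfrac2n,\qquad\text{that is,}\qquad m>\tfrac{3n-2}{2n}.
\]
This is precisely the assumption of the lemma, and it is the main step to verify; everything else is bookkeeping of constants.

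Finally, set $y(t):=\frac1p\int_\Omega u^p+\int_\Omega\frac{|\nabla v|^{2q}}{v^{q-1}}$. Summing the estimates above gives $y'+y\le c$ on $(0,T_{\max})$, and Gronwall's inequality yields $y(t)\le\max\{y(0),c\}$. This bounds $\int_\Omega u^p$ uniformly in time; the bound extends to all smaller $p$ by Hölder's inequality, and $y(0)$ is finite since $v_0\in W^{1,\infty}(\Omega)$ and $v_0>0$.
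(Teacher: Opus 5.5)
Your proposal is correct and follows the paper's proof essentially step for step. Both arguments reduce $v^{-2k}\le\eta^{1-2k}v^{-1}$ via Lemma~\ref{low}, absorb the cross term into the dissipation from Lemma~\ref{L} applied with exponent $p-m+1$, and then use the Gagliardo--Nirenberg bound for $\int_\Omega u^{p+m-1+\frac2n}$, where $m>\frac{3n-2}{2n}$ is exactly what makes $p-m+2$ subcritical, before closing with Gronwall's inequality.
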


\begin{proof}
   Without loss of generality, we assume that \(p> m-1+\max \left\{ 20, \sqrt{ n} -4 \right \}\).  Recalling the estimate \eqref{Lp'.1}, we obtain 
    \begin{align} \label{lp.1}
           \frac{1}{p} \frac{d}{dt} \int_\Omega u^p
        &\leq  -\frac{2(p-1)}{(p+m-1)^2}\int_\Omega |\nabla u^{\frac{p+m-1}{2}}|^2
        + \frac{\chi^2(p-1)}{2} \int_\Omega u^{p-m+1} \frac{|\nabla v|^2}{v^{2k}} \qquad \text{for all }t\in (0,T_{\rm max}).
    \end{align}
    Lemma \ref{low} asserts that there exists \(\eta>0\) such that
    \[
        v(\cdot,t) \geq \eta  \qquad \text{for all } t\in (0,T_{\max}).
    \]
    Using this, noting that \(k \geq \frac{1}{2}\), and applying Young's inequality, we can find \(c_1>0\) such that
    \begin{align} \label{lp.2}
        \frac{\chi^2(p-1)}{2} \int_\Omega u^{p-m+1} \frac{|\nabla v|^2}{v^{2k}}
        &\leq \frac{\chi^2(p-1)}{2 \eta^{2k-1}} \int_\Omega u^{p-m+1} \frac{|\nabla v|^2}{v} \notag \\
        &\leq \frac{p-m+1}{18} \int_\Omega \frac{|\nabla v|^{2p-2m+4}}{v^{p-m+2}}
        + c_1 \int_\Omega u^{p-m+2}.
    \end{align}
    Because \(p-m+1> \max \left \{20, \sqrt{n}-4 \right \}\), we apply Lemma \ref{L} to infer that there exists \(c_2>0\) such that
    \begin{align} \label{lp.3}
        \frac{d}{dt} \int_\Omega \frac{|\nabla v|^{2p-2m+2}}{v^{p-m}}
        &+ \int_\Omega \frac{|\nabla v|^{2p-2m+2}}{v^{p-m}}
        + \frac{p-m+1}{18} \int_\Omega \frac{|\nabla v|^{2p-2m+4}}{v^{p-m+2}} \leq c_2 \int_\Omega u^{p-m+2}
    \end{align}
    for all \(t\in (0,T_{\max})\). Similar to the estimate \eqref{Lp'.8}, we can find \(c_3>0\) and \(c_4>0\) satisfying
\begin{align}\label{lp.4}
-\frac{2(p-1)}{(p+m-1)^2}
\int_\Omega |\nabla u^{\frac{p+m-1}{2}}|^2 \leq -c_3\int_\Omega u^{p+m-1+\frac{2}{n}}+ c_4.
\end{align}
 In light of Young's inequality, we can find \(c_5>0\) such that 
\begin{align}\label{lp.5}
    c_2 \int_\Omega u^{p-m+2}+ \frac{1}{p}\int_\Omega u^p \leq c_3\int_\Omega u^{p+m-1+\frac{2}{n}} +c_5.
\end{align}
    Combining the estimates from \eqref{lp.1} to \eqref{lp.5}, we arrive at
    \begin{align}\label{lp.8}
        \frac{d}{dt} \left\{ \frac{1}{p}\int_\Omega u^p
        + \int_\Omega \frac{|\nabla v|^{2p-2m+2}}{v^{p-m}} \right\}
        + \left\{ \frac{1}{p}\int_\Omega u^p
        + \int_\Omega \frac{|\nabla v|^{2p-2m+2}}{v^{p-m}} \right\}
        \leq c_6
    \end{align}
    for all \(t\in (0,T_{\max})\), where \(c_6=c_4+c_5\). Finally, applying Gronwall's inequality to \eqref{lp.8} yields
    \[
        \frac{1}{p} \int_\Omega u^p(\cdot,t)
        + \int_\Omega \frac{|\nabla v(\cdot,t)|^{2p-2m+2}}{v^{p-m}(\cdot,t)}
        \leq \max \left\{ c_7, \frac{1}{p}\int_\Omega u_0^p
        + \int_\Omega \frac{|\nabla v_0|^{2p-2m+2}}{v_0^{p-m}} \right\}
    \]
    for all \(t\in (0,T_{\max})\), which completes the proof.
\end{proof}
            
\subsection[Case k in [1,infinity), m > max{1,(3n-2)/(2n+2)}]
           {The case \(k \in \left[1,\infty\right)\) and 
            \(m> \max\left\{1,\frac{3n-2}{2n+2}\right\}\)} \label{5.3}
 
Let us begin with the following spatio-temporal \(L^2\) estimate for \( \nabla \ln (u+a)\) in the following lemma.
\begin{lemma} \label{L1}
 Let \(\Omega \subset \mathbb{R}^n\) with \(n \geq 3\) be a smoothly bounded domain, and let \(k \geq 1\) and \(m>1\).  There exists \(C>0\) such that 
    \begin{align*}
        \int_t^{t+\tau} \int_\Omega \frac{|\nabla u(\cdot,s)|^2}{(u(\cdot,s)+a)^2 } \, ds \leq C \qquad \text{for all }t\in (0,T_{\rm max}-\tau),
    \end{align*}
    where \(\tau := \min \left \{ 1, \frac{T_{\rm max}}{2} \right \}\).
\end{lemma}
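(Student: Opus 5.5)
The plan is to use logarithmic test functions for both components. The $u$-equation tested against $\frac{1}{u+a}$ produces exactly the dissipation $\int_\Omega \frac{|\nabla u|^2}{(u+a)^2}$. The $v$-equation tested against $\frac1v$ controls the resulting cross term in a time-integrated sense. Both logarithmic functionals are bounded above and below uniformly in time by Lemma \ref{L1est} and Lemma \ref{low}, so integrating over $(t,t+\tau)$ gives the claim.

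\textbf{Step 1: the $u$-equation.} Testing the first equation in \eqref{1} by $\frac{1}{u+a}$ and integrating by parts gives
\begin{align*}
\frac{d}{dt}\int_\Omega \ln(u+a) = \int_\Omega (u+a)^{m-3}|\nabla u|^2 - \chi\int_\Omega \frac{u}{(u+a)^2}\,\frac{\nabla u\cdot\nabla v}{v^k}.
\end{align*}
Since $m>1$, we have $(u+a)^{m-3}\ge a^{m-1}(u+a)^{-2}$. Using $\frac{u}{u+a}\le 1$ and Young's inequality, the cross term is bounded by
\begin{align*}
\frac{a^{m-1}}{2}\int_\Omega \frac{|\nabla u|^2}{(u+a)^2} + \frac{\chi^2}{2a^{m-1}}\int_\Omega \frac{|\nabla v|^2}{v^{2k}}.
\end{align*}
Because $k\ge1$ and $v\ge\eta$ by Lemma \ref{low}, we have $\frac{|\nabla v|^2}{v^{2k}}\le \eta^{2-2k}\frac{|\nabla v|^2}{v^2}$. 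This is the only point where $k\ge 1$ is used. Hence
\begin{align*}
\frac{a^{m-1}}{2}\int_\Omega \frac{|\nabla u|^2}{(u+a)^2}\le \frac{d}{dt}\int_\Omega \ln(u+a) + c_1\int_\Omega\frac{|\nabla v|^2}{v^2}.
\end{align*}

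\textbf{Step 2: the $v$-equation.} Testing the second equation by $\frac1v$ gives
\begin{align*}
\frac{d}{dt}\int_\Omega \ln v=\int_\Omega \frac{|\nabla v|^2}{v^2}-|\Omega|+\int_\Omega\frac uv\ \ge\ \int_\Omega \frac{|\nabla v|^2}{v^2}-|\Omega|,
\end{align*}
where the inequality uses $u\ge0$ and $v>0$.

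\textbf{Step 3: uniform bounds on the log functionals.} For the upper bounds, $\ln s\le s$ gives $\int_\Omega \ln v\le\int_\Omega v$, which is bounded by \eqref{L1est-2}. Likewise $\int_\Omega\ln(u+a)\le \int_\Omega u_0+a|\Omega|$ by \eqref{L1est-1}. For the lower bounds, Lemma \ref{low} gives $\int_\Omega \ln v\ge|\Omega|\ln\eta$, and trivially $\int_\Omega\ln(u+a)\ge|\Omega|\ln a$.

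\textbf{Step 4: time integration.} Integrating Step 2 over $(t,t+\tau)$ with $\tau\le1$ and using Step 3 gives
\begin{align*}
\int_t^{t+\tau}\!\!\int_\Omega\frac{|\nabla v|^2}{v^2}\le \int_\Omega \ln v(\cdot,t+\tau)-\int_\Omega\ln v(\cdot,t)+|\Omega|\le c_2,
\end{align*}
uniformly in $t\in(0,T_{\rm max}-\tau)$. Then integrating the inequality from Step 1 over the same interval and applying the bounds of Step 3 to $\int_\Omega\ln(u+a)$ at the two endpoints yields the asserted estimate, with $C=\frac{2}{a^{m-1}}\big(\int_\Omega u_0+a|\Omega|-|\Omega|\ln a+c_1c_2\big)$.

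The main obstacle is controlling the cross term. In this weak, logarithmic setting it only requires the time-integrated bound on $\int_\Omega|\nabla v|^2/v^2$, which Step 2 supplies directly.
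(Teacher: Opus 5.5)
Your proposal is correct and follows essentially the same argument as the paper. You test the $u$-equation by $\frac{1}{u+a}$ and the $v$-equation by $\frac{1}{v}$, absorb the cross term via Young's inequality together with $v\ge\eta$ and $k\ge 1$, and integrate in time using $\ln s\le s$, the $L^1$ bounds, and the lower bound on $v$. The only cosmetic difference is that the paper combines the two differential inequalities into one for $\int_\Omega\ln(u+a)+c_1\int_\Omega\ln v$ before integrating, whereas you first integrate the $v$-inequality to bound $\int_t^{t+\tau}\int_\Omega|\nabla v|^2/v^2$ separately; the resulting constant is the same.
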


\begin{proof}
    Multiplying the first equation of \eqref{1} by \(-\frac{1}{u+a}\), integrating by parts, applying Young's inequality and employing Lemma \ref{low} yields
    \begin{align} \label{L1.1}
        -\frac{d}{dt}\int_\Omega \log (u+a)&= -\int_\Omega \frac{|\nabla u|^2}{(u+a)^{3-m}}+\chi \int_\Omega \frac{u}{(u+a)^2} \nabla u \cdot \frac{\nabla v}{v^k} \notag \\
        &\leq -a^{m-1} \int_\Omega \frac{|\nabla u|^2}{(u+a)^2}+ \chi \int_\Omega \frac{|\nabla u|}{u+a} \frac{|\nabla v|}{v^k} \notag \\
        &\leq - \frac{a^{m-1}}{2}\int_\Omega \frac{|\nabla u|^2}{(u+a)^2} + \frac{\chi^2}{2 a^{m-1}} \int_\Omega \frac{|\nabla v|^2}{v^{2k}} \notag \\
        &\leq  - \frac{a^{m-1}}{2}\int_\Omega \frac{|\nabla u|^2}{(u+a)^2} +c_1 \int_\Omega \frac{|\nabla v|^2}{v^2} \qquad \text{for all }t\in (0,T_{\rm max}),
    \end{align}
    where \(c_1 = \frac{\chi^2}{2a^{m-1}\eta^{2k-2}}\) with \(\eta>0\) given in Lemma \ref{low}. Testing the second equation of \eqref{1} by \(-\frac{1}{v}\) and integrating by parts, we obtain 
    \begin{align} \label{L1.2}
        -\frac{d}{dt} \int_\Omega \log v &= -\int_\Omega \frac{|\nabla v|^2}{v^2} +|\Omega | - \int_\Omega \frac{u}{v}\qquad \text{for all }t\in (0,T_{\rm max}).
    \end{align}
    Combining \eqref{L1.1} and \eqref{L1.2}, we arrive at
    \begin{align*}
        - \frac{d}{dt} \left \{ \int_\Omega \log (u+a)+ c_1 \int_\Omega \log v \right \}+\frac{a^{m-1}}{2}\int_\Omega \frac{|\nabla u|^2}{(u+a)^2} \leq c_1|\Omega| \qquad \text{for all }t\in (0,T_{\rm max}).
    \end{align*}
    Integrating this over time, applying Lemma \ref{L1est} and Lemma \ref{low}, we deduce that  
    \begin{align*}
      \frac{a^{m-1}}{2}  \int_t^{t+\tau} \int_\Omega \frac{|\nabla u(\cdot,s)|^2}{(u(\cdot,s)+a)^2 } \, ds &\leq c_1|\Omega|\tau + \int_\Omega \log (u(\cdot,t+\tau)+a) +c_1 \int_\Omega \log v(\cdot,t+\tau ) \\
      &\quad - \int_\Omega \log (u(\cdot,t)+a) -c_1 \int_\Omega \log v(\cdot,t ) \notag \\
      &\leq c_1|\Omega|\tau + \int_\Omega (u(\cdot,t+\tau)+a) +c_1 \int_\Omega  v(\cdot,t+\tau ) 
      - |\Omega | \log a -c_1 |\Omega| \log \eta\\
      &\leq c_2 \qquad \text{for all }t\in (0,T_{\rm max}-\tau ),
    \end{align*}
    where \(c_2= c_1|\Omega|\tau + \int_\Omega u_0+a|\Omega|+ c_1 \max \left \{ \int_\Omega u_0, \int_\Omega v_0 \right \}- |\Omega | \log a -c_1 |\Omega| \log \eta\) with \(\eta >0\) given in Lemma \ref{low}. The proof is now complete.
  \end{proof}
Before deriving the \(L^p\) estimate for \(u\), we need the following auxiliary lemma, which shows that the \(L^2\) norm of \(\nabla u\) controls the time evolution of the weighted quantity \(\int_\Omega \frac{|\nabla v|^4}{v^2}\).

\begin{lemma} \label{L2}
    Let \(\Omega \subset \mathbb{R}^n\) with \(n \geq 3\) be a smoothly convex bounded domain. There exist positive constants \(C_1\) and \(C_2\) such that the following holds
    \begin{align} \label{L2-1}
        \frac{d}{dt}\int_\Omega \frac{|\nabla v|^4}{v^2}+\int_\Omega \frac{|\nabla v|^4}{v^2}+C_1\int_\Omega \frac{|\nabla v|^6}{v^4}+C_1 \int_\Omega \frac{|\nabla v|^2|D^2 v|^2}{v^2}   \leq C_2 \int_\Omega |\nabla u|^2 
    \end{align}
    for all \(t\in (0,T_{\rm max})\).
\end{lemma}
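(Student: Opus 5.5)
The plan is to redo the computation of Lemma \ref{L} for the exponent $p=2$, i.e.\ for $\int_\Omega |\nabla v|^{4}/v^{2}$. The difference is that the source term $\nabla u$ is no longer integrated by parts onto the $v$-weights. Instead it is kept as it stands and estimated by Young's inequality against the dissipation. Lemma \ref{L} itself needs $p>20$, so it cannot be quoted directly. We only need its structural calculation, which is valid for any $p>1$, together with the functional inequalities of Lemma \ref{Lm} at $l=1$, $q=3$ (admissible, since $q>l+1$).

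\emph{Step 1: the time derivative.} Differentiate:
\begin{align*}
\frac{d}{dt}\int_\Omega \frac{|\nabla v|^4}{v^2}
= 4\int_\Omega \frac{|\nabla v|^2\nabla v\cdot\nabla v_t}{v^2}
-2\int_\Omega \frac{|\nabla v|^4}{v^3}v_t .
\end{align*}
Substitute $v_t=\Delta v-v+u$.

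The linear parts ($\Delta v$ and $-v$) are handled exactly as in \eqref{L.2}--\eqref{L.5} with $p=2$. We use $\nabla v\cdot\nabla\Delta v=\frac12\Delta|\nabla v|^2-|D^2v|^2$ and integrate by parts. The boundary integral is discarded because $\partial_\nu|\nabla v|^2\le 0$ on convex domains. We then use the identity for $v^2|D^2\ln v|^2$ and Young's inequality on the cross term $\nabla v\cdot\nabla|\nabla v|^2$. This yields
\begin{align*}
-4\int_\Omega \frac{|\nabla v|^2|D^2\ln v|^2}{v^{0}}\cdot v^{0}\;(\text{i.e. } -4M) \;-\;3\int_\Omega\frac{|\nabla v|^4}{v^2}\;+\;2\int_\Omega\frac{|\nabla v|^6}{v^4}.
\end{align*}
Here $M=\int_\Omega |\nabla v|^2|D^2\ln v|^2$; the zeroth-order coefficient is $p+1=3$. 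At $p=2$ the constants are not as comfortable as for large $p$, so I expect to keep the exact quadratic form in $(\nabla|\nabla v|^2, |\nabla v|^2\nabla v/v)$. Then I would use \eqref{Lm.2}, \eqref{Lm.3} with $l=1$, $q=3$ to show that the good term dominates, giving
\begin{align*}
-c\Big(\int_\Omega \frac{|\nabla v|^6}{v^4}+\int_\Omega\frac{|\nabla v|^2|D^2v|^2}{v^2}\Big)
\end{align*}
for some $c>0$. \textbf{This is the step I expect to be the main obstacle}: checking that the $p=2$ constants still close. If the crude Young step from Lemma \ref{L} fails there, my first fallback is to use the full Bochner-type term $-\int |\nabla v|^{0}|\nabla|\nabla v|^2|^2/v^{2}$ (the $p(p-1)$ term) more carefully. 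The second fallback is to bound $M$ from below by both quantities via \eqref{Lm.2}, \eqref{Lm.3}, whose constants for $n\ge3$ are explicit.

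\emph{Step 2: the $u$-terms, without moving derivatives onto $v$.} These are
\begin{align*}
4\int_\Omega \frac{|\nabla v|^2\nabla v\cdot\nabla u}{v^2}
\qquad\text{and}\qquad
-2\int_\Omega \frac{|\nabla v|^4}{v^3}u\le 0 .
\end{align*}
The second term is nonpositive since $u,v>0$, so it is dropped. For the first, Young's inequality gives
\begin{align*}
4\int_\Omega \frac{|\nabla v|^3|\nabla u|}{v^2}\le \frac c2\int_\Omega\frac{|\nabla v|^6}{v^4}+\frac{8}{c}\int_\Omega|\nabla u|^2 .
\end{align*}
This is precisely why the statement has $\int_\Omega|\nabla u|^2$ on the right-hand side.

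\emph{Step 3: conclusion.} Combining Steps 1 and 2, the term $3\int_\Omega|\nabla v|^4/v^2$ leaves at least $\int_\Omega |\nabla v|^4/v^2$ on the left. Half of the dissipation absorbs the Young remainder. This gives \eqref{L2-1} with $C_1=c/2$ and $C_2=8/c$, with no additive constant needed. Lemma \ref{low} is not even required here, since all quantities are homogeneous in $v$.
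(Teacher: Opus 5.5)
Your outline follows the paper's route: differentiate, drop the boundary term by convexity, rewrite via $D^2\ln v$, apply Lemma~\ref{Lm}, discard $-2\int_\Omega u|\nabla v|^4/v^3$, and use Young on the $\nabla u$ term. Steps~2--3 are fine. The gap is Step~1. It is the only nontrivial step, you leave it unverified, and it is set up incorrectly.

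Lemma~\ref{L} concerns $\int_\Omega|\nabla v|^{2p}/v^{p-1}$. At $p=2$ this is $\int_\Omega|\nabla v|^4/v$, not $\int_\Omega|\nabla v|^4/v^2$. So \eqref{L.2}--\eqref{L.5} do not transfer, and the expression you read off, $-4M-3\int_\Omega|\nabla v|^4/v^2+2\int_\Omega|\nabla v|^6/v^4$, is not what this functional produces. Taken at face value it would make the proof fail. Indeed, \eqref{Lm.2} only gives $\int_\Omega|\nabla v|^6/v^4\le KM$ with $K=4\bigl(\frac{3+\sqrt n}{3}\bigr)^2+2>2$, so a term $+2\int_\Omega|\nabla v|^6/v^4$ can never be absorbed by $-4M$. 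Also, Lemma~\ref{Lm} with $l=1$, $q=3$ is about $\int_\Omega v^{-3}|\nabla v|^5$, $\int_\Omega v|\nabla v|\,|D^2\ln v|^2$ and $\int_\Omega v^{-1}|\nabla v|\,|D^2v|^2$, none of which occur here. The weights you need correspond to $l=2$, $q=4$ (admissible since $4>3$).

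The missing computation is short, and it is exactly what the paper does. Carry out the two integrations by parts for this functional and insert the identity for $v^2|D^2\ln v|^2$. All second-order terms and all $|\nabla v|^6/v^4$ terms then combine exactly:
\[
\frac{d}{dt}\int_\Omega\frac{|\nabla v|^4}{v^2}+2\int_\Omega\frac{|\nabla v|^4}{v^2}
\le-2\int_\Omega\frac{1}{v^2}\Bigl|\nabla|\nabla v|^2-\frac{|\nabla v|^2\nabla v}{v}\Bigr|^2-4M
+4\int_\Omega\frac{|\nabla v|^2\nabla v\cdot\nabla u}{v^2}-2\int_\Omega u\frac{|\nabla v|^4}{v^3},
\]
where $M=\int_\Omega|\nabla v|^2|D^2\ln v|^2$. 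Thus the zeroth-order coefficient is $2$, not $3$, and no positive $|\nabla v|^6/v^4$ term survives. Your ``first fallback'' of keeping the exact quadratic form is the right instinct: that form is a perfect square and can simply be dropped. Writing $-4M=-2M-2M$ and using \eqref{Lm.2} and \eqref{Lm.3} with $l=2$, $q=4$ gives
\[
-4M\le-\frac{18}{4(3+\sqrt n)^2+18}\int_\Omega\frac{|\nabla v|^6}{v^4}-\frac{6}{(3+\sqrt n)^2+3}\int_\Omega\frac{|\nabla v|^2|D^2v|^2}{v^2}.
\]
After this, your Steps~2 and~3 go through verbatim; the coefficient $2$ still leaves $\int_\Omega|\nabla v|^4/v^2$ on the left.
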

\begin{proof}
    By direct calculations, we obtain 
    \begin{align} \label{L2.1}
        \frac{d}{dt}\int_\Omega \frac{|\nabla v|^4}{v^2} &= 4\int_\Omega \frac{|\nabla v|^2 \nabla v \cdot \nabla v_t}{v_2} -2 \int_\Omega \frac{|\nabla v|^4}{v^3}v_t \notag \\
        &= I+J.
    \end{align}
    Making use of the second equation of \eqref{1},  applying the identity \(\nabla v \cdot \nabla \Delta v = \frac{1}{2} \Delta |\nabla v|^2- |D^2 v|^2 \), and integrating by parts several times, we deduce that
    \begin{align} \label{L2.2}
        I&= 4\int_\Omega \frac{|\nabla v|^2 \nabla v \cdot \nabla \left ( \Delta v-v+u \right )}{v^2} \notag \\
        &= 2 \int_\Omega \frac{|\nabla v|^2}{v^2} \Delta |\nabla v|^2 - 4 \int_\Omega \frac{|\nabla v|^2|D^2v|^2}{v^2} -4 \int_\Omega \frac{|\nabla v|^4}{v^2} +4\int_\Omega \frac{|\nabla v|^2 \nabla v \cdot \nabla u}{v^2} \notag \\
        &=-2 \int_\Omega \frac{ \left | \nabla |\nabla v|^2 \right |^2}{v^2}+4 \int_\Omega \frac{|\nabla v|^2 \nabla v \cdot \nabla |\nabla v|^2}{v^3} +2 \int_{\partial \Omega} \frac{|\nabla v|^2}{v^2} \cdot \frac{\partial |\nabla v|^2}{\partial \nu }\, dS \notag \\
        &\quad- 4 \int_\Omega \frac{|\nabla v|^2|D^2v|^2}{v^2}-4 \int_\Omega \frac{|\nabla v|^4}{v^2} +4\int_\Omega \frac{|\nabla v|^2 \nabla v \cdot \nabla u}{v^2}  \notag \\
        &\leq -2 \int_\Omega \frac{ \left | \nabla |\nabla v|^2 \right |^2}{v^2}+4 \int_\Omega \frac{|\nabla v|^2 \nabla v \cdot \nabla |\nabla v|^2}{v^3} - 4 \int_\Omega \frac{|\nabla v|^2|D^2v|^2}{v^2} \notag \\
        &\quad-4 \int_\Omega \frac{|\nabla v|^4}{v^2} +4\int_\Omega \frac{|\nabla v|^2 \nabla v \cdot \nabla u}{v^2} \qquad \text{for all } t \in (0,T_{\rm max}),
    \end{align}
    where the last inequality holds due to the fact that \(\frac{\partial |\nabla v|^2}{ \partial \nu} \leq 0\) on \(\partial \Omega\) by convexity of \(\Omega\) (see \cite{Lion}). Next, using the second equation of \eqref{1} and integrating by parts, we infer that 
    \begin{align} \label{L2.3}
        J&= - 2\int_\Omega \frac{|\nabla v|^4}{v^3} \left ( \Delta v-v+u \right ) \notag \\
        &= 4 \int_\Omega \frac{|\nabla v|^2 \nabla v \cdot \nabla |\nabla v|^2}{v^3}-6\int_\Omega \frac{|\nabla v|^6}{v^4}+2\int_\Omega \frac{|\nabla v|^4}{v^2} -2 \int_\Omega u \frac{|\nabla v|^4}{v^3}
    \end{align}
    for all $t\in (0,T_{\rm max})$. Collecting \eqref{L2.1}, \eqref{L2.2} and \eqref{L2.3}, employing the identity that 
      \begin{align*}
        |D^2v|^2= v^2 |D^2\ln v|^2+ \frac{1}{v} \nabla v \cdot \nabla |\nabla v|^2- \frac{|\nabla v|^4}{v^2},
    \end{align*}
 using \eqref{Lm.2} and \eqref{Lm.3} with \(q=4\) and \(l=2\) and applying Young's inequality, we arrive at
    \begin{align*}
             \frac{d}{dt}\int_\Omega \frac{|\nabla v|^4}{v^2}+ 2\int_\Omega \frac{|\nabla v|^4}{v^2} &\leq-2 \int_\Omega \frac{1}{v^2} \left | \nabla |\nabla v|^2- \frac{|\nabla v|^2 \nabla v}{v} \right |^2 - 4 \int_\Omega |D^2 \ln v|^2|\nabla v|^2  + 4\int_\Omega \frac{|\nabla v|^3|\nabla u|}{v^2} \notag \\
             &\leq - 4 \int_\Omega |D^2 \ln v|^2|\nabla v|^2  + 4\int_\Omega \frac{|\nabla v|^3|\nabla u|}{v^2} \notag \\
             &\leq -c_1 \int_\Omega \frac{|D^2v|^2|\nabla v|^2}{v^2} -c_2 \int_\Omega \frac{|\nabla v|^6}{v^4} + 4\int_\Omega \frac{|\nabla v|^3|\nabla u|}{v^2} \notag \\
             &\leq -c_1 \int_\Omega \frac{|D^2v|^2|\nabla v|^2}{v^2} -\frac{c_2}{2} \int_\Omega \frac{|\nabla v|^6}{v^4} + c_3\int_\Omega |\nabla u|^2
    \end{align*}
    for all \(t\in (0,T_{\rm max})\), where \(c_1= \frac{18}{4(3+\sqrt{n})^2+18}\), \(c_2= \frac{6}{(3+\sqrt{n})^2+3}\) and \(c_3=\frac{8}{c_2}\). This clearly implies \eqref{L2-1} with \(C_1= \min \left\{c_1, \frac{c_2}{2} \right \} \) and \(C_2=c_3\) and concludes the proof.
\end{proof}

The second key ingredient needed for the proof of the \(L^p\) boundedness of solutions is the following lemma. The time evolution of the weighted quantity
\(
\int_\Omega u\frac{|\nabla v|^2}{v}
\)
produces a favorable term
\(
-\int_\Omega u^2\frac{|\nabla v|^2}{v^2},
\)
which allows us to control the other nonlinear terms arising from the singular sensitivity.

\begin{lemma} \label{LM}
 Let \(\Omega \subset \mathbb{R}^n\) with \(n \geq 3\) be a smoothly convex bounded domain and  let \(a>0\), \(m>1\) , \(\chi>0\), \(k\geq 1\).  For any \(\varepsilon \in \left (0, \frac{1}{6} \right )\), there exists \(C=C(\varepsilon)>0\) such that 
    \begin{align*}
        \frac{d}{dt}\int_\Omega u \frac{|\nabla v|^2}{v} +\int_\Omega u \frac{|\nabla v|^2}{v}+ \frac{1}{2}\int_\Omega u^2 \frac{|\nabla v|^2}{v^2} &\leq \varepsilon \int_\Omega (u+a)^{2m-2}|\nabla u|^2 +C\int_\Omega \frac{|\nabla v|^2|D^2 v|^2}{v^2} \notag \\
        &\quad+ C\int_\Omega \frac{|\nabla v|^6}{v^4}+C \int_\Omega \frac{|\nabla u|^2}{(u+a)^2}
    \end{align*}
     for all \(t\in (0,T_{\rm max})\).
\end{lemma}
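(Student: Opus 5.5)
The plan is to differentiate $\mathcal E_1=\int_\Omega u\frac{|\nabla v|^2}{v}$ directly and estimate every resulting term. Each term should either be favorable or be absorbed by one of three things: a fraction of the good term $\int_\Omega u^2\frac{|\nabla v|^2}{v^2}$, the small diffusion term $\varepsilon\mathcal D$, or the quantities $\int_\Omega\frac{|\nabla v|^2|D^2v|^2}{v^2}$, $\int_\Omega\frac{|\nabla v|^6}{v^4}$ and $\int_\Omega\frac{|\nabla u|^2}{(u+a)^2}$ allowed on the right. Throughout, Lemma~\ref{low} ($v\ge\eta$) together with $k\ge1$ lets us replace $v^{-k-1}$ by $\eta^{1-k}v^{-2}$, and the pointwise bound $|\Delta v|^2\le n|D^2v|^2$ turns Laplacians into Hessians.

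We start from
\[
\frac{d}{dt}\int_\Omega u\frac{|\nabla v|^2}{v}=\int_\Omega u_t\frac{|\nabla v|^2}{v}+2\int_\Omega u\frac{\nabla v\cdot\nabla v_t}{v}-\int_\Omega u\frac{|\nabla v|^2}{v^2}v_t .
\]
\textbf{The $u_t$ part.} Integrate by parts using the flux $(u+a)^{m-1}\nabla u-\chi u\nabla v/v^k$ and
\[
\nabla\!\left(\frac{|\nabla v|^2}{v}\right)=\frac{2D^2v\,\nabla v}{v}-\frac{|\nabla v|^2\nabla v}{v^2}.
\]
The diffusion contribution is bounded by $\int_\Omega (u+a)^{m-1}|\nabla u|\bigl(\frac{2|D^2v||\nabla v|}{v}+\frac{|\nabla v|^3}{v^2}\bigr)$. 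By Young's inequality this is at most $\frac{\varepsilon}{2}\mathcal D+C\int_\Omega\frac{|\nabla v|^2|D^2v|^2}{v^2}+C\int_\Omega\frac{|\nabla v|^6}{v^4}$, once we use $v\ge\eta$ to turn $v^{-2}$ weights into $v^{-2}$ or $v^{-4}$ as needed. The chemotactic contribution splits into a term $-\chi\int_\Omega u\frac{|\nabla v|^4}{v^{k+2}}\le0$, which we drop, and a term bounded by $2\chi\eta^{1-k}\int_\Omega u\frac{|\nabla v|^2|D^2v|}{v^2}$. The latter is at most $\frac18\int_\Omega u^2\frac{|\nabla v|^2}{v^2}+C\int_\Omega\frac{|\nabla v|^2|D^2v|^2}{v^2}$.

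\textbf{The $v_t$ parts.} Insert $v_t=\Delta v-v+u$. The last integral produces exactly $-\int_\Omega u^2\frac{|\nabla v|^2}{v^2}$, which is the favorable term of the lemma, together with $+\int_\Omega u\frac{|\nabla v|^2}{v}$ and $-\int_\Omega u\frac{|\nabla v|^2\Delta v}{v^2}$. The middle integral produces $-2\int_\Omega u\frac{|\nabla v|^2}{v}$, which nets to $-\int_\Omega u\frac{|\nabla v|^2}{v}$ as required. It also produces $2\int_\Omega\frac{u\,\nabla v\cdot\nabla u}{v}$ and the third-order term $2\int_\Omega\frac{u\,\nabla v\cdot\nabla\Delta v}{v}$. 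These are handled as follows.
\begin{itemize}
\item The term $\int_\Omega u\frac{|\nabla v|^2|\Delta v|}{v^2}$ is at most $\frac18\int_\Omega u^2\frac{|\nabla v|^2}{v^2}+C\int_\Omega\frac{|\nabla v|^2|D^2v|^2}{v^2}$.
\item The term $2\int_\Omega\frac{u|\nabla v||\nabla u|}{v}$ is at most $\frac18\int_\Omega u^2\frac{|\nabla v|^2}{v^2}+8\int_\Omega|\nabla u|^2$.
\item For the third-order term, which I expect to be the only delicate point, integrate by parts once more. The boundary term vanishes since $\partial_\nu v=0$. Using $\nabla\cdot(u\nabla v/v)=\frac{\nabla u\cdot\nabla v}{v}+\frac{u\Delta v}{v}-\frac{u|\nabla v|^2}{v^2}$, the term equals
\[
-2\int_\Omega\frac{\Delta v\,\nabla u\cdot\nabla v}{v}-2\int_\Omega\frac{u|\Delta v|^2}{v}+2\int_\Omega u\frac{|\nabla v|^2\Delta v}{v^2}.
\]
The middle piece is nonpositive and is dropped. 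The last piece is treated like the first bullet. The first piece is at most $\int_\Omega|\nabla u|^2+C\int_\Omega\frac{|\nabla v|^2|D^2v|^2}{v^2}$.
\end{itemize}

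\textbf{Closing the estimate.} It remains to control $\int_\Omega|\nabla u|^2$, which carries the weight $(u+a)^0$. Since $0$ lies between the exponents $2m-2>0$ and $-2$, Young's inequality gives the pointwise bound $1\le\delta(u+a)^{2m-2}+C_\delta(u+a)^{-2}$. Hence $9\int_\Omega|\nabla u|^2\le\frac{\varepsilon}{2}\mathcal D+C\int_\Omega\frac{|\nabla u|^2}{(u+a)^2}$. The absorbed multiples of $\int_\Omega u^2\frac{|\nabla v|^2}{v^2}$ total at most $\frac12$, so half of the favorable term survives. Collecting all pieces gives the claimed inequality, with $C$ depending on $\varepsilon,\chi,\eta,k,n$.
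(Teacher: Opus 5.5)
Your proposal is correct and follows essentially the paper's route. It uses the same three-term split of $\frac{d}{dt}\int_\Omega u\frac{|\nabla v|^2}{v}$, the same Young-type absorptions, and the same closing bound $1\le\delta(u+a)^{2m-2}+C_\delta(u+a)^{-2}$; because you fix the absorption weights at $\frac18$ rather than tying them to $\varepsilon$, you do not need the restriction $\varepsilon<\frac16$.

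The one genuine difference is the third-order term in the $v_t$ part. The paper writes $2\nabla v\cdot\nabla\Delta v=\Delta|\nabla v|^2-2|D^2v|^2$, integrates by parts, and discards $\int_{\partial\Omega}\frac{u}{v}\,\partial_\nu|\nabla v|^2\,dS\le 0$ using convexity of $\Omega$. Your integration by parts against $\nabla\cdot(u\nabla v/v)$ instead produces a boundary term that vanishes identically since $\partial_\nu v=0$. So your version of this lemma holds without convexity, although convexity is still needed in Lemmas~\ref{L2} and~\ref{L}, so the main theorem is unaffected.
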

\begin{proof}
Direct calculations show that 
\begin{align} \label{LM.1}
    \frac{d}{dt}\int_\Omega  u \frac{|\nabla v|^2}{v} &= \int_\Omega \frac{|\nabla v|^2}{v}u_t + 2 \int_\Omega \frac{\nabla v \cdot \nabla v_t}{v}u - \int_\Omega u \frac{|\nabla v|^2}{v^2}v_t \notag \\
    &= I+J+K.
\end{align}
By using the identity \(\nabla |\nabla v|^2= 2D^2 v \cdot \nabla v\), and integrating by parts several times, we obtain 
\begin{align} \label{LM.2}
    I&= \int_\Omega \frac{|\nabla v|^2}{v} \left ( \nabla \cdot \left ( (u+a)^{m-1}\nabla u \right ) - \chi \nabla \cdot \left (  u \frac{ \nabla v}{v^k} \right ) \right ) \notag \\
    &=-\int_\Omega (u+a)^{m-1} \nabla \left ( \frac{|\nabla v|^2}{v} \right )\cdot \nabla u + \chi \int_\Omega u  \nabla \left ( \frac{|\nabla v|^2}{v} \right )\cdot \frac{\nabla v}{v^k} \notag \\
    &=- 2\int_\Omega (u+a)^{m-1} \frac{\nabla u \cdot (D^2 v \cdot \nabla v)}{v} + \int_\Omega (u+a)^{m-1} \frac{|\nabla v|^2 \nabla v \cdot \nabla u}{v^2} \notag \\
    &\quad+2 \chi \int_\Omega u \frac{\nabla v \cdot (D^2 v \cdot \nabla v)}{v^{1+k}} -\chi \int_\Omega u  \frac{|\nabla v|^4}{v^{2+k}}. 
\end{align}
By several applications of Young's inequality, we infer that 
\begin{align} \label{LM.3}
    2\int_\Omega (u+a)^{m-1} \frac{\nabla u \cdot (D^2 v \cdot \nabla v)}{v} \leq \frac{\varepsilon}{4}\int_\Omega (u+a)^{2m-2}|\nabla u|^2+ \frac{4}{\varepsilon} \int_\Omega \frac{|\nabla v|^2|D^2 v|^2}{v^2},
\end{align}
and 
\begin{align} \label{LM.4}
     \int_\Omega (u+a)^{m-1} \frac{|\nabla v|^2 \nabla v \cdot \nabla u}{v^2} \leq  \frac{\varepsilon}{4}\int_\Omega (u+a)^{2m-2}|\nabla u|^2+\frac{1}{\varepsilon}\int_\Omega \frac{|\nabla v|^6}{v^4},
\end{align}
as well as 
\begin{align} \label{LM.5}
    2 \chi \int_\Omega u \frac{\nabla v \cdot (D^2 v \cdot \nabla v)}{v^{1+k}} &\leq \varepsilon \int_\Omega u^2 \frac{|\nabla v|^2}{v^2} + \frac{\chi^2}{\varepsilon} \int_\Omega \frac{|D^2v|^2|\nabla v|^2}{v^{2k}} \notag \\
    &\leq \varepsilon \int_\Omega u^2 \frac{|\nabla v|^2}{v^2} + \frac{\chi^2}{\varepsilon \eta^{2k-2}} \int_\Omega \frac{|D^2v|^2|\nabla v|^2}{v^{2}},
\end{align}
where the last inequality holds because Lemma \ref{low} asserts that \(\inf_{x\in \Omega } v(x,t) \geq \eta\) for all \(t\in (0,T_{\rm max})\) for some \(\eta>0\). Collecting from \eqref{LM.2} to \eqref{LM.5}, we deduce that 
\begin{align}\label{LM.6}
    I \leq \frac{\varepsilon}{2}\int_\Omega (u+a)^{2m-2}|\nabla u|^2 + \varepsilon \int_\Omega u^2 \frac{|\nabla v|^2}{v^2}+c_1 \int_\Omega  \frac{|D^2v|^2|\nabla v|^2}{v^{2}} +c_1\int_\Omega \frac{|\nabla v|^6}{v^4},
\end{align}
where \(c_1=  \frac{4}{\varepsilon}+\frac{\chi^2}{\varepsilon \eta^{2k-2}}\). We employ the identities \(\nabla v \cdot \nabla \Delta v = \frac{1}{2} \Delta |\nabla v|^2- |D^2 v|^2 \) and \(\nabla |\nabla v|^2= 2D^2 v \cdot \nabla v\) and integrate by parts several times to infer that 
\begin{align}\label{LM.7}
    J&= 2 \int_\Omega \frac{u \nabla v \cdot \nabla \left ( \Delta v-v+u \right ) }{v} \notag \\
    &= \int_\Omega \frac{u}{v} \Delta |\nabla v|^2 -2 \int_\Omega u\frac{|\nabla v|^2}{v} +2 \int_\Omega u\frac{ \nabla v \cdot \nabla u}{v} \notag \\
    &= - 2\int_\Omega  \frac{\nabla u \cdot (D^2 v \cdot \nabla v)}{v}+ \int_\Omega \frac{u}{v^2} \nabla v \cdot (D^2v \cdot \nabla v) + \int_{\partial \Omega} \frac{u}{v} \cdot \frac{\partial |\nabla v|^2}{\partial \nu }\, dS \notag \\
    &\quad-2 \int_\Omega u\frac{|\nabla v|^2}{v}+2 \int_\Omega u\frac{ \nabla v \cdot \nabla u}{v} \notag \\
    &\leq 2 \int_\Omega \frac{|\nabla u||D^2 v||\nabla v|}{v} + \int_\Omega \frac{u|\nabla v|^2 |D^2 v|}{v^2 } + 2\int_\Omega \frac{u|\nabla v||\nabla u|}{v}-2 \int_\Omega u\frac{|\nabla v|^2}{v},
\end{align}
where the last inequality holds due to the positivity of \(u\) and \(v\) and the fact that \(\frac{\partial |\nabla v|^2}{ \partial \nu} \leq 0\) on \(\partial \Omega\) by convexity of \(\Omega\) \cite{Lion}. Several applications of Young's inequality yield
\begin{align}\label{LM.8}
    2 \int_\Omega \frac{|\nabla u||D^2 v||\nabla v|}{v} \leq \int_\Omega |\nabla u|^2 + \int_\Omega \frac{|D^2 v|^2 |\nabla v|^2}{v^2},
\end{align}
and 
\begin{align}\label{LM.9}
    \int_\Omega \frac{u|\nabla v|^2 |D^2 v|}{v^2 }  \leq \frac{\varepsilon}{2} \int_\Omega u^2\frac{|\nabla v|^2}{v^2} + \frac{1}{2\varepsilon} \int_\Omega \frac{|\nabla v|^2|D^2v|^2}{v^2},
\end{align}
and
\begin{align}\label{LM.10}
    2\int_\Omega \frac{u|\nabla v||\nabla u|}{v} \leq \frac{\varepsilon}{2} \int_\Omega u^2\frac{|\nabla v|^2}{v^2} + \frac{2}{\varepsilon}\int_\Omega |\nabla u|^2,
\end{align}
as well as 
\begin{align}\label{LM.11}
    \left (1+\frac{2}{\varepsilon} \right )\int_\Omega |\nabla u|^2 \leq \frac{\varepsilon}{2}\int_\Omega (u+a)^{2m-2}|\nabla u|^2 + c_2 \int_\Omega \frac{|\nabla u|^2}{(u+a)^2},
\end{align}
where \(c_2=c_2(\varepsilon)>0\). Combining the estimates from \eqref{LM.7} to \eqref{LM.11}, we obtain 
\begin{align}\label{LM.12}
    J \leq \frac{\varepsilon}{2}\int_\Omega (u+a)^{2m-2}|\nabla u|^2 + \varepsilon \int_\Omega u^2\frac{|\nabla v|^2}{v^2} +c_3 \int_\Omega \frac{|D^2 v|^2 |\nabla v|^2}{v^2}+c_2 \int_\Omega \frac{|\nabla u|^2}{(u+a)^2},
\end{align}
where \(c_3= 1+\frac{1}{2\varepsilon}\). Using the second equation of \eqref{1} and applying the inequality that \(|\Delta v| \leq \sqrt{n} |D^2v|\) and Young's inequality, we have
\begin{align}\label{LM.13}
K&= -\int_\Omega u \frac{|\nabla v|^2}{v^2} \left ( \Delta v-v +u \right ) \notag \\
&\leq \sqrt{n} \int_\Omega u \frac{|\nabla v|^2 |D^2 v|}{v^2} + \int_\Omega u \frac{|\nabla v|^2}{v} - \int_\Omega u^2 \frac{|\nabla v|^2}{v^2}  \notag \\
&\leq \left (-1+\varepsilon \right)\int_\Omega u^2 \frac{|\nabla v|^2}{v^2} + \frac{n}{4\varepsilon} \int_\Omega \frac{|\nabla v|^2 |D^2 v|^2}{v^2}  + \int_\Omega u \frac{|\nabla v|^2}{v}. 
\end{align}
Collecting \eqref{LM.1}, \eqref{LM.6}, \eqref{LM.12} and \eqref{LM.13}, we arrive at 
\begin{align*}
    \frac{d}{dt}\int_\Omega u \frac{|\nabla v|^2}{v} +\int_\Omega u \frac{|\nabla v|^2}{v} &\leq \varepsilon \int_\Omega (u+a)^{2m-2}|\nabla u|^2 +(-1+3\varepsilon )\int_\Omega u^2 \frac{|\nabla v|^2}{v^2} \notag \\
    &\quad+c_1\int_\Omega \frac{|\nabla v|^6}{v^4}+c_4\int_\Omega \frac{|\nabla v|^2 |D^2 v|^2}{v^2}+c_2 \int_\Omega \frac{|\nabla u|^2}{(u+a)^2}  \notag \\
    &\leq \varepsilon \int_\Omega (u+a)^{2m-2}|\nabla u|^2 -\frac{1}{2}\int_\Omega u^2 \frac{|\nabla v|^2}{v^2} \notag \\
    &\quad+c_5\int_\Omega \frac{|\nabla v|^6}{v^4}+c_5\int_\Omega \frac{|\nabla v|^2 |D^2 v|^2}{v^2}+c_5 \int_\Omega \frac{|\nabla u|^2}{(u+a)^2} 
\end{align*}
for all \(t\in (0,T_{\rm max})\), where \(c_4= c_1+c_3+\frac{n}{4\varepsilon}\) and \(c_5= \max \left \{c_1,c_2,c_4 \right \}.\) The proof is now complete.
\end{proof}
Thanks to the two main ingredients established in Lemmas~\ref{L2} and~\ref{LM}, we can now establish uniform-in-time \(L^{m+1}\) boundedness for \(u\), as stated below.

\begin{lemma} \label{L3}
 Let \(\Omega \subset \mathbb{R}^n\) with \(n \geq 3\) be a smoothly convex bounded domain  let \(k \geq 1\) and \(m>1\). There exists \(C>0\) such that 
    \begin{align} \label{L3-1}
        \int_\Omega u^{m+1}(\cdot,t) \leq C \qquad \text{for all }t\in (0,T_{\rm max}).
    \end{align}
\end{lemma}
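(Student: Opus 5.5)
The bound will come from the energy functional described in the introduction,
\[
F(t)=\int_\Omega (u+a)^{m+1}+A\int_\Omega u\frac{|\nabla v|^2}{v}+B\int_\Omega \frac{|\nabla v|^4}{v^2},
\]
with positive constants $A,B$ still to be chosen. The aim is an inequality $F'+\mu F\le h(t)$ in which $h$ has uniformly bounded averages over time windows of length $\tau$; Lemma \ref{ODI} then bounds $F$, and hence $\int_\Omega u^{m+1}$. The ingredients are the evolution of $\mathcal E_0$, Lemma \ref{LM} for $\mathcal E_1$, Lemma \ref{L2} for $\mathcal E_2$, and the spatio-temporal bound of Lemma \ref{L1}.

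\emph{Step 1: the evolution of $\mathcal E_0$.} Test the first equation of \eqref{1} with $(m+1)(u+a)^m$. Integration by parts gives
\[
\frac{d}{dt}\mathcal E_0=-m(m+1)\mathcal D+\chi m(m+1)\int_\Omega u(u+a)^{m-1}\frac{\nabla u\cdot\nabla v}{v^k}.
\]
I apply Young's inequality to the cross term. By Lemma \ref{low} and $k\ge1$ we have $v^{-2k}\le \eta^{2-2k}v^{-2}$, and $u\le u+a$. Hence the cross term is at most
\[
\frac{m(m+1)}{2}\mathcal D+c_1\int_\Omega u^2\frac{|\nabla v|^2}{v^2}=\frac{m(m+1)}{2}\mathcal D+c_1\mathcal G .
\]

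\emph{Step 2: choice of $A$ and $B$.} Take $A=4c_1$, so that the term $-\frac A2\mathcal G$ from Lemma \ref{LM} absorbs $c_1\mathcal G$ with room to spare. Then choose $\varepsilon$ in Lemma \ref{LM} so small that $A\varepsilon\le \frac{m(m+1)}{8}$. With $C(\varepsilon)$ the resulting constant, choose $B$ so that $BC_1\ge 2AC(\varepsilon)$; the dissipation from Lemma \ref{L2} then absorbs the $\mathcal B$-terms. What remains on the right is
\[
BC_2\int_\Omega|\nabla u|^2+AC\int_\Omega\frac{|\nabla u|^2}{(u+a)^2}.
\]
Because $m>1$, on $\{u+a\ge K\}$ we have $|\nabla u|^2\le K^{2-2m}(u+a)^{2m-2}|\nabla u|^2$, while on $\{u+a<K\}$ we have $|\nabla u|^2\le K^2|\nabla u|^2/(u+a)^2$. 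Choosing $K$ large gives
\[
BC_2\int_\Omega|\nabla u|^2\le \frac{m(m+1)}{8}\mathcal D+c_2\int_\Omega\frac{|\nabla u|^2}{(u+a)^2}.
\]
Collecting everything,
\[
F'+A\mathcal E_1+B\mathcal E_2+\frac{m(m+1)}4\mathcal D\le c_3\int_\Omega\frac{|\nabla u|^2}{(u+a)^2}.
\]

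\emph{Step 3: damping and conclusion.} $F$ contains $\mathcal E_0$ but the inequality has no $-\mathcal E_0$ term, so I recover one from $\mathcal D$ by Gagliardo--Nirenberg. Note $\mathcal D=\frac{4}{m^2}\int_\Omega|\nabla (u+a)^{m/2}|^2$. Lemma \ref{GN} with $L^1$ control (from \eqref{L1est-1}) gives
\[
\int_\Omega (u+a)^{m+1}\le c\|\nabla(u+a)^{m/2}\|_2^{2\theta(m+1)/m}+c .
\]
The exponent satisfies $2\theta(m+1)/m\le 2$ exactly when $m\ge 1-\frac2n$, which certainly holds here. Young's inequality then yields $\mathcal E_0\le \delta\mathcal D+c_\delta$. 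Adding $\mathcal E_0$ to both sides gives
\[
F'+\mu F\le c_3\int_\Omega\frac{|\nabla u|^2}{(u+a)^2}+c_4 .
\]
By Lemma \ref{L1}, the right-hand side has $\tau$-averages bounded uniformly, so Lemma \ref{ODI} with $t_0=0$ bounds $F$ on $(0,T_{\max})$. If one insists on a $C^1$ start, take $t_0$ small instead. Since $u\le u+a$, this gives \eqref{L3-1}.

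\emph{Main obstacle.} The delicate point is the order in which constants are fixed in Step 2. $A$ depends only on $c_1$; $\varepsilon$ depends on $A$; $C(\varepsilon)$ is then fixed; $B$ depends on $C(\varepsilon)$; and $K$ depends on $B$. The chain closes only because the $|\nabla u|^2$ term from Lemma \ref{L2} can be split via $m>1$, with the excess sent to the Lemma \ref{L1} quantity, which requires no smallness. I would check carefully that no constant needed earlier depends circularly on a later one.
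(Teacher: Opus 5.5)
Your architecture is the same as the paper's. You use the same functional (the paper normalizes the first term as $\frac1{m+1}\int_\Omega(u+a)^{m+1}$ and puts weight $2c_1$ on $\mathcal E_1$ and $2c_1c_4/c_2$ on $\mathcal E_2$). You use Lemmas \ref{L2} and \ref{LM} to cancel $\mathcal G$ and $\mathcal B$, and Lemma \ref{L1} with Lemma \ref{ODI} to handle the residual $\int_\Omega|\nabla u|^2/(u+a)^2$. The differences in Step 2 are cosmetic. The paper fixes $\varepsilon$ first, chooses the weights for exact cancellation, and absorbs $\int_\Omega|\nabla u|^2$ by Young's inequality. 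You leave slack and split along the level set $\{u+a\ge K\}$. Your ordering of constants is consistent and has no circularity.

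There is, however, an error in Step 3 that makes that step fail as written. The identity $\mathcal D=\frac4{m^2}\int_\Omega|\nabla(u+a)^{m/2}|^2$ is false, because $\frac4{m^2}|\nabla(u+a)^{m/2}|^2=(u+a)^{m-2}|\nabla u|^2$. Since $a>0$ you do have $\|\nabla(u+a)^{m/2}\|_2^2\le \frac{m^2}{4a^m}\mathcal D$, but this does not rescue the argument. Take $f=(u+a)^{m/2}$ with $p=\frac{2(m+1)}m$, $q=\frac2m$, $r=2$. Lemma \ref{GN} then gives the exponent $p\theta=\frac{2m}{m-1+2/n}$, which is strictly larger than $2$ for every $m>1$ once $n\ge3$. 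So Young's inequality cannot produce $\mathcal E_0\le\delta\mathcal D+c_\delta$.

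The correct identity is $\mathcal D=\frac1{m^2}\int_\Omega|\nabla(u+a)^m|^2$. Apply Lemma \ref{GN} to $f=(u+a)^m$ with $p=\frac{m+1}m$, $q=\frac1m$, where the $L^{1/m}$ norm is controlled by mass conservation. This gives $p\theta=\frac{2m}{2m-1+2/n}$, which is $<2$ exactly when $m>1-\frac2n$. That is the condition you quoted, so this is presumably the computation you intended. With that substitution, which is precisely the paper's estimate \eqref{L3.5}, your proof is complete.
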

\begin{proof}
    Multiplying the first equation of \eqref{1} by \((u+a)^m\), integrating by parts, applying Young's inequality and employing Lemma \ref{low}, we obtain 
    \begin{align} \label{L3.1}
        \frac{1}{m+1}\frac{d}{dt}\int_\Omega (u+a)^{m+1} &=-m \int_\Omega (u+a)^{2m-2}|\nabla u|^2+\chi m \int_\Omega (u+a)^{m-1}u \nabla u \cdot \frac{\nabla v}{v^k} \notag \\
        &\leq - \frac{m}{2}\int_\Omega (u+a)^{2m-2}|\nabla u|^2 + \frac{\chi^2 m}{2} \int_\Omega u^2 \frac{|\nabla v|^2}{v^{2k}} \notag \\
        &\leq  -\frac{m}{2}\int_\Omega (u+a)^{2m-2}|\nabla u|^2 + c_1 \int_\Omega u^2 \frac{|\nabla v|^2}{v^{2}}
    \end{align}
    for all $t\in (0,T_{\rm max})$, where \(c_1= \frac{\chi^2 m}{2 \eta^{2k-2}}\) with \(\eta>0\) given in Lemma \ref{low}. In view of Lemma \ref{L2}, we can find positive constants \(c_2\) and \(c_3\) such that 
    \begin{align} \label{L3.2}
        \frac{d}{dt}\int_\Omega \frac{|\nabla v|^4}{v^2}+\int_\Omega \frac{|\nabla v|^4}{v^2}+c_2\int_\Omega \frac{|\nabla v|^6}{v^4}+c_2 \int_\Omega \frac{|\nabla v|^2|D^2 v|^2}{v^2}   \leq c_3 \int_\Omega |\nabla u|^2  \qquad \text{for all }t\in (0,T_{\rm max}).
    \end{align}
    Applying Lemma \ref{LM} with \(\varepsilon := \min \left \{ \frac{1}{12}, \frac{m}{4(c_1+1)} \right \}\), we can find \(c_4>0\) such that 
    \begin{align}\label{L3.3}
        \frac{d}{dt}\int_\Omega u \frac{|\nabla v|^2}{v} +\int_\Omega u \frac{|\nabla v|^2}{v}+ \frac{1}{2}\int_\Omega u^2 \frac{|\nabla v|^2}{v^2} &\leq \varepsilon \int_\Omega (u+a)^{2m-2}|\nabla u|^2 +c_4\int_\Omega \frac{|\nabla v|^2|D^2 v|^2}{v^2} \notag \\
        &\quad+ c_4\int_\Omega \frac{|\nabla v|^6}{v^4}+c_4 \int_\Omega \frac{|\nabla u|^2}{(u+a)^2}
    \end{align}
     for all \(t\in (0,T_{\rm max})\). Let
     \begin{align*}
         F(t):= \frac{1}{m+1} \int_\Omega (u+a)^{m+1}+ A \int_\Omega \frac{|\nabla v|^4}{v^2} +B \int_\Omega u \frac{|\nabla v|^2}{v} \qquad \text{for all }t\in (0,T_{\rm max}),
     \end{align*}
     with \(A:= \frac{2c_1c_4}{c_2}\) and \(B:=2c_1\). An application of Young's inequality yields
     \begin{align}\label{L3.4}
         Ac_3 \int_\Omega |\nabla u|^2 \leq \varepsilon \int_\Omega (u+a)^{2m-2}|\nabla u|^2+c_5 \int_\Omega \frac{|\nabla u|^2}{(u+a)^2},
     \end{align}
     where \(c_5>0\). In light of Lemma \ref{GN}, Young's inequality and \eqref{L1est-1}, we deduce that 
     \begin{align}\label{L3.5}
         \frac{1}{m+1}\int_\Omega (u+a)^{m+1} &= \frac{1}{m+1} \left \| (u+a)^m \right \|_{L^{\frac{m+1}{m}}(\Omega)}^\frac{m+1}{m} \notag \\
         &\leq c_6 \left \|\nabla (u+a)^m \right \|_{L^2(\Omega)}^{\frac{(m+1)\theta}{m}} \left \| (u+a)^m \right \|_{L^{\frac{1}{m}}(\Omega)}^{\frac{(m+1)(1-\theta)}{m}} +c_6\left \| (u+a)^m \right \|_{L^{\frac{1}{m}}(\Omega)}^{\frac{m+1}{m}} \notag \\
         &\leq c_7 \left ( \int_\Omega (u+a)^{2m-2} |\nabla u|^2 \right )^\frac{(m+1)\theta}{2m} +c_8 \notag \\
         &\leq \varepsilon \int_\Omega (u+a)^{2m-2} |\nabla u|^2 +c_9,
     \end{align}
     where \(c_7,c_8,c_9\) are positive constants, 
     \begin{align*}
         \theta := \frac{m-\frac{m}{m+1}}{m+\frac{1}{n}-\frac{1}{2}} \in (0,1),
     \end{align*}
    and the last inequality holds due to \(\frac{(m+1)\theta}{2m}<1\). Collecting the estimates from \eqref{L3.1} to \eqref{L3.5}, we arrive at 
    \begin{align*}
        F'(t) +F(t) &\leq - \left ( \frac{m}{2}-(B+2) \varepsilon \right ) \int_\Omega (u+a)^{2m-2} |\nabla u|^2+ \left (c_1-\frac{B}{2} \right )\int_\Omega u^2 \frac{|\nabla v|^2}{v^2} \notag \\
        &\quad+ \left ( Bc_4-Ac_2 \right )\int_\Omega \frac{|\nabla v|^2|D^2 v|^2}{v^2} + \left ( Bc_4-Ac_2 \right ) \int_\Omega \frac{|\nabla v|^6}{v^4} \notag \\
        &\quad+c_{10}\int_\Omega \frac{|\nabla u|^2}{(u+a)^2} +c_9 \notag \\
        &\leq c_{10}\int_\Omega \frac{|\nabla u|^2}{(u+a)^2} +c_9 \qquad \text{for all }t\in (0,T_{\rm max}),
    \end{align*}
    where \(c_{10}=c_4+c_5\) and the last inequality holds because \(\varepsilon \leq \frac{m}{4(c_1+1)} =\frac{m}{2(B+2)}\), \(c_1= \frac{B}{2}\) and \(Bc_4=Ac_2\). By virtue of Lemma \ref{L1}, we can find \(c_{11}>0\) such that 
     \begin{align*}
        \int_t^{t+\tau} \int_\Omega \frac{|\nabla u|^2}{(u+a)^2 } \leq c_{11} \qquad \text{for all }t\in (0,T_{\rm max}-\tau),
    \end{align*}
    where \(\tau := \min \left \{ 1, \frac{T_{\rm max}}{2} \right \}\). Using this and applying Lemma \ref{ODI} with \(h(t):= \int_\Omega \frac{|\nabla u(\cdot,t)|^2}{(u(\cdot,t)+a)^2 }\), we can find \(c_{12}>0\) such that 
    \begin{align*}
       \frac{1}{m+1} \int_\Omega (u(\cdot,t)+a)^{m+1}+ A \int_\Omega \frac{|\nabla v(\cdot,t)|^4}{v^2(\cdot,t)} +B \int_\Omega u(\cdot,t) \frac{|\nabla v(\cdot,t)|^2}{v(\cdot,t)} \leq c_{12} \qquad \text{for all }t\in (0,T_{\rm max}),
    \end{align*}
    which implies \eqref{L3-1} and concludes the proof.
\end{proof}

With the assistance of Lemmas~\ref{L3} and~\ref{L}, we can now establish the uniform-in-time \(L^p\) boundedness of \(u\), as stated in the following lemma.
            
    \begin{lemma} \label{Lp}
        Let \(\Omega \subset \mathbb{R}^n\) with \(n \geq 2\) be a bounded convex domain with smooth boundary, let \(m> \max \left \{1, \frac{3n-2}{2n+2} \right \}\) and \(k \geq 1\). For any \(p>1\), there exists \(C=C(p)>0\) such that 
        \begin{align*}
            \int_\Omega u^p(\cdot,t) \leq  C \qquad \text{for all }t\in (0,T_{\rm max}).
        \end{align*}
    \end{lemma}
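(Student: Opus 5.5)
The plan is to follow the scheme of Lemma \ref{lp}, with one change. There the absorption of the cross terms used only the mass bound \eqref{L1est-1}. Here we instead use the uniform $L^{m+1}$ bound from Lemma \ref{L3} in the Gagliardo--Nirenberg step, and this is exactly where the threshold $m>\frac{3n-2}{2n+2}$ enters.

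\textbf{Reduction and testing.} Since $\Omega$ is bounded, Hölder's inequality lets us assume that $p$ is large, namely $p>m-1+\max\{20,\sqrt n-4\}$. Testing the first equation of \eqref{1} with $u^{p-1}$ gives, exactly as in \eqref{Lp'.1},
\[
\frac1p\frac{d}{dt}\int_\Omega u^p\le -\frac{2(p-1)}{(p+m-1)^2}\int_\Omega|\nabla u^{\frac{p+m-1}{2}}|^2+\frac{\chi^2(p-1)}{2}\int_\Omega u^{p-m+1}\frac{|\nabla v|^2}{v^{2k}}.
\]
Because $k\ge1\ge\frac12$ and $v\ge\eta$ by Lemma \ref{low}, we have $v^{-2k}\le\eta^{1-2k}v^{-1}$. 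Young's inequality with exponents $p-m+2$ and $\frac{p-m+2}{p-m+1}$ then bounds the last term by
\[
\frac{p-m+1}{18}\int_\Omega\frac{|\nabla v|^{2(p-m+2)}}{v^{p-m+2}}+c\int_\Omega u^{p-m+2}.
\]
The first of these is absorbed by the dissipative term in Lemma \ref{L}, applied with exponent $p-m+1$; this is admissible by the choice of $p$. Lemma \ref{L} in turn produces $c\int_\Omega u^{p-m+2}$ on its right-hand side.

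\textbf{Key step: absorbing $\int_\Omega u^{p-m+2}$.} We apply Lemma \ref{GN} to $w=u^{\frac{p+m-1}{2}}$, with the lower-order norm taken in $L^{\frac{2(m+1)}{p+m-1}}(\Omega)$. That norm is uniformly bounded by Lemma \ref{L3}. Equivalently, the dissipation controls
\[
\int_\Omega u^{p+m-1+\frac{2(m+1)}{n}}\le c\int_\Omega|\nabla u^{\frac{p+m-1}{2}}|^2+c .
\]
One checks that the resulting power of $\|\nabla w\|_{L^2}$ equals $2$ at this endpoint exponent, and is strictly below $2$ for any smaller exponent. By Young's inequality, $\int_\Omega u^{p-m+2}$ and $\frac1p\int_\Omega u^p$ can therefore be absorbed into a small multiple of the dissipation plus a constant, provided
\[
p-m+2<p+m-1+\frac{2(m+1)}{n}.
\]
This condition is equivalent to $3-2m<\frac{2(m+1)}{n}$, that is, $m(2n+2)>3n-2$, which is precisely the hypothesis. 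Checking the Gagliardo--Nirenberg exponent carefully, including that $\theta\in(0,1)$ and that the lower-order term is harmless, is the main technical point.

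\textbf{Conclusion.} Set
\[
y(t)=\frac1p\int_\Omega u^p+\int_\Omega\frac{|\nabla v|^{2(p-m+1)}}{v^{p-m}} .
\]
Combining the estimates above gives $y'+y\le c$ on $(0,T_{\max})$. Since $y(0)$ is finite for initial data as in \eqref{initial}, Gronwall's inequality yields $y(t)\le\max\{y(0),c\}$ for all $t\in(0,T_{\max})$, which proves the claim.
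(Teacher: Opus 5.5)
Your proposal is correct and follows the paper's proof essentially step for step. You use $v\ge\eta$ and Young's inequality to generate $\int_\Omega |\nabla v|^{2(p-m+2)}/v^{p-m+2}$, absorb it via Lemma \ref{L} with exponent $p-m+1$, and then run a Gagliardo--Nirenberg step that uses the $L^{m+1}$ bound of Lemma \ref{L3}, which is exactly where $m>\frac{3n-2}{2n+2}$ enters. The only difference is cosmetic: the paper tests with $(u+a)^{p-1}$ and interpolates $(u+a)^{\frac{p+m-1}{2}}$, whereas you test with $u^{p-1}$ as in \eqref{Lp'.1}, which works equally well.
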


  \begin{proof}
    Without loss of generality, we assume that \(p> m-1 +\max \left\{20, \sqrt{n}-4 \right \}\). Multiplying the first equation by \((u+a)^{p-1}\), integrating by parts and applying Young's inequality yields
        \begin{align} \label{Lp.1}
            \frac{1}{p} \frac{d}{dt}\int_\Omega (u+a)^p &= -(p-1)\int_\Omega (u+a)^{p+m-3}|\nabla u|^2 +\chi (p-1)\int_\Omega u(u+a)^{p-2} \nabla u \cdot \frac{\nabla v}{v^k} \notag \\
            & \leq  -\frac{p-1}{2}\int_\Omega (u+a)^{p+m-3}|\nabla u|^2 +\frac{\chi^2(p-1)}{2} \int_\Omega (u+a)^{p-m+1}\frac{|\nabla v|^2}{v^{2k}} 
        \end{align}
        for all \(t\in (0,T_{\rm max})\). Applying Lemma \ref{low} and Young's inequality, we can find \(c_1>0\) such that
        \begin{align} \label{Lp.2}
            \frac{\chi^2(p-1)}{2} \int_\Omega (u+a)^{p-m+1}\frac{|\nabla v|^2}{v^{2k}} &\leq \frac{\chi^2(p-1)}{2\eta^{2k-1}} \int_\Omega (u+a)^{p-m+1}\frac{|\nabla v|^2}{v} \notag \\
            &\leq \frac{p-m+1}{18} \int_\Omega \frac{|\nabla v|^{2p-2m+4}}{v^{p-m+2}} +c_1 \int_\Omega (u+a)^{p-m+2},
        \end{align}
        where \(\eta>0\) is given in Lemma \ref{low}. Since \(p-m+1> \max \left\{20, \sqrt{n}-4 \right \}\), we can employ Lemma \ref{L} to deduce that 
        \begin{align} \label{Lp.3}
            \frac{d}{dt}\int_\Omega \frac{|\nabla v|^{2p-2m+2}}{v^{p-m}}+ \int_\Omega \frac{|\nabla v|^{2p-2m+2}}{v^{p-m}}+ \frac{p-m+1}{18}\int_\Omega \frac{|\nabla v|^{2p-2m+4}}{v^{p-m+2}}  \leq c_2 \int_\Omega u^{p-m+2}
        \end{align}
        for all \(t\in (0,T_{\rm max})\), where \(c_2>0\). By virtue of Lemma \ref{L3}, there exists \(c_3>0\) such that 
        \begin{align*}
            \int_\Omega (u(\cdot,t)+a)^{m+1} \leq c_3 \qquad \text{for all }t\in (0,T_{\rm max}).
        \end{align*}
        This, together with Lemma \ref{GN}, implies that 
        \begin{align*}
            \int_\Omega (u+a)^{p+m-1+\frac{2(m+1)}{n}} &= \left \| (u+a)^{\frac{p+m-1}{2}} \right \|_{L^{2+ \frac{4(m+1)}{n(p+m-1)}}(\Omega)}^{2+ \frac{4(m+1)}{n(p+m-1)}} \notag \\
            &\leq c_4 \left \| \nabla (u+a)^{\frac{p+m-1}{2}} \right \|_{L^2(\Omega)}^{\left (2+ \frac{4(m+1)}{n(p+m-1)} \right )\theta}  \left \|(u+a)^{\frac{p+m-1}{2}} \right \|_{L^{\frac{2(m+1)}{p+m-1}}(\Omega)}^{\left (2+ \frac{4(m+1)}{n(p+m-1)} \right )(1-\theta)} \notag \\
            &\quad+ c_4\left \|(u+a)^{\frac{p+m-1}{2}} \right \|_{L^{\frac{2(m+1)}{p+m-1}}(\Omega)}^{\frac{p+m-1}{m+1}+\frac{2}{n}} \notag \\
            &\leq c_5  \left \| \nabla (u+a)^{\frac{p+m-1}{2}} \right \|_{L^2(\Omega)}^{\left (2+ \frac{4(m+1)}{n(p+m-1)} \right )\theta}+ c_6,
        \end{align*}
        where \(c_4,c_5,c_6\) are positive constants and 
        \begin{align*}
            \theta = \frac{\frac{p+m-1}{2(m+1)}- \frac{n(p+m-1)}{2n(p+m-1)+4(m+1)}}{\frac{p+m-1}{2(m+1)} +\frac{1}{n}-\frac{1}{2}}.
        \end{align*}
       One can verify that
        \begin{align*}
            {\left (2+ \frac{4(m+1)}{n(p+m-1)} \right )\theta} =2. 
        \end{align*}
       This, together with the above estimate, entails that there exist positive constants \(c_7\) and \(c_8\) satisfying 
        \begin{align}\label{Lp.4}
            -\frac{p-1}{2}\int_\Omega (u+a)^{p+m-3}|\nabla u|^2 \leq -c_7 \int_\Omega (u+a)^{p+m-1+\frac{2(m+1)}{n}} +c_8.
        \end{align}
        Noting that  \(p-m+2< p+m-1+\frac{2(m+1)}{n}\) when \(m> \frac{3n-2}{2n+2}\) and \(p<p+m-1+\frac{2(m+1)}{n}\), we apply Young's inequality to obtain 
        \begin{align}\label{Lp.5}
            \frac{1}{p}\int_\Omega (u+a)^p+c_1\int_\Omega (u+a)^{p-m+2}+c_2 \int_\Omega u^{p-m+2} \leq c_7 \int_\Omega (u+a)^{p+m-1+\frac{2(m+1)}{n}} +c_9,
        \end{align}
        for some \(c_9>0\). Collecting all the above estimates from \eqref{Lp.1} to \eqref{Lp.5}, we arrive at 
        \begin{align*}
              \frac{d}{dt} \left \{ \frac{1}{p} \int_\Omega (u+a)^p+\int_\Omega \frac{|\nabla v|^{2p-2m+2}}{v^{p-m}} \right \}+\left \{ \frac{1}{p} \int_\Omega (u+a)^p+\int_\Omega \frac{|\nabla v|^{2p-2m+2}}{v^{p-m}} \right \} \leq c_{10} \qquad \text{for all }t\in (0,T_{\rm max}),
        \end{align*}
        where \(c_{10}=c_8+c_9\). Finally, an application of Gronwall's inequality yields
        \begin{align*}
            \frac{1}{p} \int_\Omega (u(\cdot,t)+a)^p+\int_\Omega \frac{|\nabla v(\cdot,t)|^{2p-2m+2}}{v^{p-m}(\cdot,t)} \leq c_{11}\qquad \text{for all }t\in (0,T_{\rm max}),
        \end{align*}
        where \(c_{11}=\max \left \{c_{10},  \frac{1}{p}\int_\Omega (u_0+a)^p+\int_\Omega \frac{|\nabla v_0|^{2p-2m+2}}{v_0^{p-m}} \right \}\), which finishes the proof.
  \end{proof}

\section{Proof of the main result} \label{S6} 
We are now ready to prove our main theorem.
\begin{proof}[Proof of Theorem \ref{thm1}]
    In view of Lemmas \ref{Lp2}, \ref{Lp'}, \ref{lp} and \ref{Lp}, there exists \(c_1>0\) such that
    \begin{align} \label{prf.1}
        \int_\Omega u^{n+1}(\cdot,t) \leq c_1 \qquad \text{for all }t\in (0,T_{\rm max}).
    \end{align}
    Applying Lemma \ref{C52.Para-Reg} and using \eqref{prf.1}, we can find \(c_2> 0\) such that
    \begin{align} \label{prf.2}
        \left \| v(\cdot,t) \right \|_{W^{1, \infty}(\Omega)} \leq c_2\qquad \text{for all }t\in (0,T_{\rm max}).
    \end{align}
    Recalling the estimate \eqref{Lp2.1} and using \eqref{prf.2}, we obtain 
    \begin{align}
        \frac{d}{dt} \int_\Omega u^p \leq - \frac{p(p-1)}{2} \int_\Omega u^{p+m-3}|\nabla u|^2 + \frac{\chi^2 p(p-1)c_2^2}{2 \eta^{2k}} \int_\Omega u^{p-m+1} \qquad \text{for all }t\in (0,T_{\rm max}),
    \end{align}
    where \(\eta>0\) is given in Lemma \ref{low}. Now, we apply a standard Moser iteration argument as established in \cite{Alikakos1, Alikakos2} for reaction-diffusion equation or in \cite{TaoWinkler2012}[Lemma A.1] for chemotaxis systems to deduce that 
    \begin{align} \label{prf.3}
          \left \| u(\cdot,t) \right \|_{L^{ \infty}(\Omega)} \leq c_3\qquad \text{for all }t\in (0,T_{\rm max}),
    \end{align}
    for some \(c_3>0\). Combining \eqref{prf.2}, \eqref{prf.3} and the extensibility criterion for solutions \eqref{local-exist-para-1}, we infer that \(T_{\rm max} = \infty\) and 
    \begin{align*}
        \sup_{t>0} \left \{  \left \| u(\cdot,t) \right \|_{L^{ \infty}(\Omega)} +\left \| v(\cdot,t) \right \|_{W^{1, \infty}(\Omega)}  \right \}< \infty,
    \end{align*}
    which finishes the proof.
\end{proof}

\section*{Declarations}
\paragraph{Conflict of Interest} The author declares that they have no conflict of interest.
\paragraph{Declaration of Generative AI and AI-Assisted Technologies in the Manuscript Preparation Process}

During the preparation of this work, the author used \textbf{DeepSeek} for language polishing, grammar correction, readability improvement, and checking the logical structure and mathematical computations in the proofs. The author independently verified all mathematical arguments and takes full responsibility for the content of the published article.

\paragraph{Acknowledgments} This work was supported by the Hangzhou Postdoctoral Research Grant.

     \paragraph{Data Availability}
 Data sharing is not applicable to this article as no datasets were generated or analyzed during
the current study.


\begin{thebibliography}{99}

\bibitem{Alikakos1}
N.~D. Alikakos.
\newblock An application of the invariance principle to reaction-diffusion equations.
\newblock \emph{J. Differ. Equ.}, 33:201--225, 1979.

\bibitem{Alikakos2}
N.~D. Alikakos.
\newblock \(L^p\) bounds of solutions of reaction-diffusion equations.
\newblock \emph{Commun. Partial Differ. Equ.}, 4:827--868, 1979.

\bibitem{Biler-1999}
P.~Biler.
\newblock Global solutions to some parabolic-elliptic systems of chemotaxis.
\newblock \emph{Adv. Math. Sci. Appl.}, 9:347--359, 1999.

\bibitem{Dolbeault1}
A.~Blanchet, J.~Dolbeault, and B.~Perthame.
\newblock Two-dimensional Keller-Segel model: Optimal critical mass and qualitative properties of the solutions.
\newblock \emph{Electron. J. Differ. Equ.}, 2006. Paper No. 44.

\bibitem{CS2012}
T.~Cie\'slak and C.~Stinner.
\newblock Finite-time blowup and global-in-time unbounded solutions to a parabolic-parabolic quasilinear Keller-Segel system in higher dimensions.
\newblock \emph{J. Differ. Equ.}, 252:5832--5851, 2012.

\bibitem{Ding2018}
M.~Ding.
\newblock Global boundedness in a fully parabolic quasilinear chemotaxis system with singular sensitivity.
\newblock \emph{J. Math. Anal. Appl.}, 461:1260--1270, 2018.

\bibitem{Dolbeault}
J.~Dolbeault and B.~Perthame.
\newblock An optimal critical mass in the two-dimensional Keller-Segel model in \(\mathbb{R}^2\).
\newblock \emph{C. R. Acad. Sci. Paris, Ser. I}, 339:611--616, 2004.

\bibitem{Fujie}
K.~Fujie.
\newblock Boundedness in a fully parabolic chemotaxis system with singular sensitivity.
\newblock \emph{J. Math. Anal. Appl.}, 424:675--684, 2015.

\bibitem{Fujie_2018}
K.~Fujie and T.~Senba.
\newblock A sufficient condition of sensitivity functions for boundedness of solutions to a parabolic-parabolic chemotaxis system.
\newblock \emph{Nonlinearity}, 31:1639--1672, 2018.

\bibitem{FWY}
K.~Fujie, M.~Winkler, and T.~Yokota.
\newblock Blow-up prevention by logistic sources in a parabolic-elliptic Keller-Segel system with singular sensitivity.
\newblock \emph{Nonlinear Anal.}, 109:56--71, 2014.

\bibitem{KMT}
K.~Fujie, M.~Winkler, and T.~Yokota.
\newblock Boundedness of solutions to parabolic-elliptic Keller-Segel systems with signal-dependent sensitivity.
\newblock \emph{Math. Methods Appl. Sci.}, 38:1212--1224, 2015.

\bibitem{DM}
D.~Horstmann and M.~Winkler.
\newblock Boundedness vs. blow-up in a chemotaxis system.
\newblock \emph{J. Differ. Equ.}, 215:52--107, 2005.

\bibitem{Jia2019}
Z.~Jia and Z.~Yang.
\newblock Global boundedness to a parabolic-parabolic chemotaxis model with nonlinear diffusion and singular sensitivity.
\newblock \emph{J. Math. Anal. Appl.}, 475:139--153, 2019.

\bibitem{Keller2}
E.~F. Keller and L.~A. Segel.
\newblock A model for chemotaxis.
\newblock \emph{J. Theor. Biol.}, 30:225--234, 1971.

\bibitem{Keller}
E.~F. Keller and L.~A. Segel.
\newblock Initiation of slime mold aggregation viewed as an instability.
\newblock \emph{J. Theor. Biol.}, 26:399--415, 1970.

\bibitem{Kurt}
H.~I. Kurt.
\newblock Boundedness in a chemotaxis system with weak singular sensitivity and logistic kinetics in any dimensional setting.
\newblock \emph{J. Differ. Equ.}, 416:1429--1461, 2025.

\bibitem{Kurt+Shen}
H.~I. Kurt and W.~Shen.
\newblock Finite-time blow-up prevention by logistic source in parabolic-elliptic chemotaxis models with singular sensitivity in any dimensional setting.
\newblock \emph{SIAM J. Math. Anal.}, 53:973--1003, 2021.

\bibitem{Lankeit_2016}
J.~Lankeit.
\newblock A new approach toward boundedness in a two-dimensional parabolic chemotaxis system with singular sensitivity.
\newblock \emph{Math. Methods Appl. Sci.}, 39:394--404, 2016.

\bibitem{Le2026NA}
M.~Le.
\newblock An improvement toward global boundedness in a fully parabolic chemotaxis with singular sensitivity in any dimension.
\newblock \emph{Nonlinear Anal.}, 268:Paper No. 114082, 13, 2026.

\bibitem{minh-preprint-2}
M.~Le.
\newblock Global boundedness in the higher-dimensional fully parabolic chemotaxis with weak singular sensitivity and logistic source.
\newblock \emph{Discrete Contin. Dyn. Syst. Ser. B}, 30:4858--4869, 2025.

\bibitem{LiLankeit2016}
Y.~Li and J.~Lankeit.
\newblock Boundedness in a chemotaxis-haptotaxis model with nonlinear diffusion.
\newblock \emph{Nonlinearity}, 29:1564--1595, 2016.

\bibitem{Lion}
P.-L. Lions.
\newblock R\'esolution de probl\`emes elliptiques quasilin\'eaires.
\newblock \emph{Arch. Ration. Mech. Anal.}, 74:335--353, 1980.

\bibitem{Souplet-13}
N.~Mizoguchi and P.~Souplet.
\newblock Nondegeneracy of blow-up points for the parabolic Keller-Segel system.
\newblock \emph{Ann. Inst. Henri Poincar\'e C Anal. Non Lin\'eaire}, 31, 2013.

\bibitem{Nagai1}
T.~Nagai.
\newblock Blow-up of radially symmetric solutions to a chemotaxis system.
\newblock \emph{Adv. Math. Sci. Appl.}, 5:581--601, 1995.

\bibitem{Nagai2}
T.~Nagai.
\newblock Global existence and blow-up of solutions to a chemotaxis system.
\newblock \emph{Nonlinear Anal.}, 47:777--787, 2001.

\bibitem{Nagai+Senba}
T.~Nagai and T.~Senba.
\newblock Behavior of radially symmetric solutions of a system related to chemotaxis.
\newblock \emph{Nonlinear Anal.}, 30:3837--3842, 1997.

\bibitem{Nagai4}
T.~Nagai, T.~Senba, and T.~Suzuki.
\newblock Blowup of nonradial solutions to parabolic-elliptic systems modeling chemotaxis in two-dimensional domains.
\newblock \emph{J. Inequal. Appl.}, 6:37--55, 2001.

\bibitem{Nagai3}
T.~Nagai, T.~Senba, and T.~Suzuki.
\newblock Chemotaxis collapse in a parabolic system of mathematical biology.
\newblock \emph{Hiroshima Math. J.}, 20:463--497, 2000.

\bibitem{NSY}
T.~Nagai, T.~Senba, and K.~Yoshida.
\newblock Application of the Trudinger-Moser inequality to a parabolic system of chemotaxis.
\newblock \emph{Funkcial. Ekvac.}, 40:411--433, 1997.

\bibitem{Ren2024}
G.~Ren and X.~Zhou.
\newblock Global boundedness in a two-dimensional chemotaxis system with nonlinear diffusion and singular sensitivity.
\newblock \emph{Adv. Nonlinear Anal.}, 13:Paper No. 20230125, 20, 2024.

\bibitem{Tao2011}
Y.~Tao.
\newblock Boundedness in a chemotaxis model with oxygen consumption by bacteria.
\newblock \emph{J. Math. Anal. Appl.}, 381:521--529, 2011.

\bibitem{Tao+Winkler2011}
Y.~Tao and M.~Winkler.
\newblock A chemotaxis-haptotaxis model: the roles of nonlinear diffusion and logistic source.
\newblock \emph{SIAM J. Math. Anal.}, 43:685--704, 2011.

\bibitem{TaoWinkler2012}
Y.~Tao and M.~Winkler.
\newblock Boundedness in a quasilinear parabolic-parabolic Keller-Segel system with subcritical sensitivity.
\newblock \emph{J. Differ. Equ.}, 252:692--715, 2012.

\bibitem{WangDingLi2017}
W.~Wang, M.~Y. Ding, and Y.~Li.
\newblock Global boundedness in a quasilinear chemotaxis system with general density-signal governed sensitivity.
\newblock \emph{J. Differ. Equ.}, 263:2851--2873, 2017.

\bibitem{Winkler+2019}
M.~Winkler.
\newblock A three-dimensional Keller-Segel-Navier-Stokes system with logistic source: global weak solutions and asymptotic stabilization.
\newblock \emph{J. Funct. Anal.}, 276:1339--1401, 2019.

\bibitem{Winkler-2010}
M.~Winkler.
\newblock Aggregation vs. global diffusive behavior in the higher-dimensional Keller-Segel model.
\newblock \emph{J. Differ. Equ.}, 248:2889--2905, 2010.

\bibitem{Winkler2022}
M.~Winkler.
\newblock Approaching logarithmic singularities in quasilinear chemotaxis-consumption systems with signal-dependent sensitivities.
\newblock \emph{Discrete Contin. Dyn. Syst. Ser. B}, 27:6565--6587, 2022.

\bibitem{Winkler-2013}
M.~Winkler.
\newblock Finite-time blow-up in the higher-dimensional parabolic-parabolic Keller-Segel system.
\newblock \emph{J. Math. Pures Appl.}, 100:748--767, 2013.

\bibitem{Winkler+2011}
M.~Winkler.
\newblock Global solutions in a fully parabolic chemotaxis system with singular sensitivity.
\newblock \emph{Math. Methods Appl. Sci.}, 34:176--190, 2011.

\end{thebibliography}
\end{document}